\documentclass[a4paper,12pt,reqno,oneside]{amsart}
\usepackage{setspace} \setstretch{1.1}
\usepackage{typearea} 

\usepackage{amscd,amsmath,amssymb,verbatim}
\usepackage{xr-hyper} 
\usepackage{graphicx}
\usepackage{ifthen}
\usepackage{paralist,cite}
\usepackage[colorlinks,backref,final,bookmarksnumbered,bookmarks]{hyperref}
\usepackage[backrefs]{amsrefs}
\usepackage{tikz-cd}
\usepackage{enumitem}

\externaldocument[metr:]{metrizable}
\externaldocument[fish:]{fineshape}
\externaldocument[axi:]{axioms}
\externaldocument[lim:]{lim-colim}
\externaldocument[book:]{textbook}

\usepackage[utf8]{inputenc}
\usepackage[T1,T2A]{fontenc}

\newcommand{\arxiv}[2][]{\ifthenelse{\equal{#1}{}}
{\href{http://arxiv.org/abs/#2}{\tt arXiv:#2}}
{\href{http://arxiv.org/abs/math/#2}{\tt arXiv:math.#1/#2}}}

\theoremstyle{plain}
\newtheorem{maintheorem}{Theorem}
 
\newtheorem{theorem}{Theorem}[section]
\newtheorem{lemma}[theorem]{Lemma}
\newtheorem{corollary}[theorem]{Corollary}
\newtheorem{proposition}[theorem]{Proposition}
\newtheorem{problem}[theorem]{Problem}
\newtheorem*{corollary*}{Corollary}

\theoremstyle{definition}
\newtheorem{example}[theorem]{Example}

\newtheoremstyle{remark}
{}{}{}{}{\itshape}{}{ }{\thmname{#1}\thmnumber{ \itshape #2.}}
\theoremstyle{remark}
\newtheorem{remark}[theorem]{Remark}

\newtheoremstyle{concise}
{}{}{}{}{\bfseries}{}{ }{\thmnumber{#2.}\thmnote{ #3.}}
\theoremstyle{concise}

\def\N{\mathbb{N}} 
\def\R{\mathbb{R}} 
\def\Z{\mathbb{Z}}

\def\x{\times}
\def\but{\setminus} 
\def\emb{\hookrightarrow}

\def\eps{\varepsilon} 
\def\phi{\varphi} 

\def\xr#1{\xrightarrow{#1}} 
 \renewcommand{\:}{\colon}

\DeclareMathOperator*{\colim}{colim} 
\DeclareMathOperator*{\derlim}{lim^1}

\DeclareMathOperator{\Int}{Int} \DeclareMathOperator{\id}{id}
 \DeclareMathOperator{\Cl}{Cl} 
\DeclareMathOperator{\Fr}{Fr}

\def\STAR{\raisebox{-1pt}{\LARGE$\star$}}
\def\Star{{\raisebox{1pt}{\hspace{0.5pt}\normalsize$\star$}}}
\def\Bullet{{\raisebox{2pt}{\hspace{-1pt}\Huge$.$}}}
\def\join{\mathop{\raisebox{-3pt}{\Huge$*$}}}

\def\tph#1{\raise2.5pt\hbox{\the\textfont1\char"7F}\!\!#1}
\def\tpm#1{\raise0pt\hbox{\the\textfont1\char"7F}\!#1}
\def\tpl#1{\lower1.5pt\hbox{\the\textfont1\char"7F}\!#1}


\def\bydef{\mathrel{\mathop:}=}

\DeclareSymbolFont{bskadd}{U}{bskma}{m}{n}
\DeclareFontFamily{U}{bskma}{\skewchar\font130 }
\DeclareFontShape{U}{bskma}{m}{n}{<->bskma10}{}
\DeclareMathSymbol{\varlrttriangle}{\mathord}{bskadd}    {"E4}

\makeatletter
\newcommand*\nullseq{\mathop{\mathpalette\@biguoperator{\bigsqcup}}}
\newcommand*\@biguoperator[2]{\ooalign{\hidewidth$#1$\raisebox{1pt}{\scriptsize$\star$}\hidewidth\cr$#1#2$\cr}}
\makeatother

\begin{document}

\title{Coronated polyhedra and coronated ANRs}
\author{Sergey A. Melikhov}
\address{Steklov Mathematical Institute of Russian Academy of Sciences,
ul.\ Gubkina 8, Moscow, 119991 Russia}
\email{melikhov@mi-ras.ru}

\begin{abstract}
Locally compact separable metrizable spaces are characterized among all metrizable spaces as those that admit
a cofinal sequence $K_1\subset K_2\subset\cdots$ of compact subsets.
Their \v Cech cohomology is well-understood due to Petkova's short exact sequence 
$0\to\derlim H^{n-1}(K_i)\to H^n(X)\to\lim H^n(K_i)\to 0$.

We study a dual class of spaces.
We call a metrizable space $X$ a {\it coronated polyhedron} if it contains a compactum $K$ such that $X\but K$ is 
a polyhedron.
These include, apart from compacta and polyhedra, spaces such as the topologist's sine curve 
(or the Warsaw circle) and the comb (=comb-and-flea) space.
The complement of every locally compact subset of $S^n$ is a coronated polyhedron.

We prove that a metrizable space $X$ is a coronated polyhedron if and only if it admits a countable polyhedral
resolution; or, equivalently, a sequential polyhedral resolution $\dots\to R_2\to R_1$.
In the latter case, we establish a short exact sequence $0\to\derlim H_{n+1}(R_i)\to H_n(X)\to\lim H_n(R_i)\to 0$
for Steenrod--Sitnikov homology and also for any (extraordinary) homology theory satisfying 
Milnor's axioms of map excision and $\prod$-additivity.
We also show that such homology theories are invariants of strong shape for coronated polyhedra.
On the other hand, Quigley's short exact sequence $0\to\derlim\pi_{n+1}(R_i)\to\pi_n(X)\to\lim\pi_n(R_i)\to 0$ 
for Steenrod homotopy of compacta fails for Steenrod--Sitnikov homotopy of coronated polyhedra,
at least when $n=0$.
\end{abstract}

\maketitle
\section{Introduction}

It is well-known that the poset $\kappa_X$ of compact subsets of a metrizable space $X$ 
(ordered by inclusion) contains a cofinal sequence if and only if $X$ is locally compact and separable
(see \cite{M00}*{Proposition \ref{book:local compactum}}).
Moreover, a sequence of compact subsets $K_1\subset K_2\subset\dots$ of $X$ is cofinal in $\kappa_X$
if and only if each $K_i$ lies in the interior of some $K_j$ 
(see \cite{M00}*{Proof of Proposition \ref{book:local compactum}}).

\begin{theorem}\label{petkova} {\rm (Petkova \cite{Pe1}*{Proposition 4})}
Let $X$ be a locally compact separable metrizable space.
If $X$ is the union of its compact subsets $K_1\subset K_2\subset\dots$, where each $K_i\subset\Int K_{i+1}$,
then its \v Cech cohomology fits in a natural short exact sequence
\[0\to\derlim H^{n-1}(K_i)\to H^n(X)\to\lim H^n(K_i)\to 0.\]
\end{theorem}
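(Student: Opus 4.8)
The plan is to derive the sequence from the standard $\derlim$-sequence for the \v Cech cohomology of an increasing union of open sets, and then to exchange the open sets for the compacta $K_i$ by a cofinality (interleaving) argument. First I would replace the compact exhaustion by the open exhaustion $U_i\bydef\Int K_i$: since $K_{i-1}\subset\Int K_i=U_i\subset K_i$ and $X=\bigcup_i K_i$, the $U_i$ are open, nested, and satisfy $X=\bigcup_i U_i$. I would resist modelling $X$ on the mapping telescope $\operatorname{Tel}(K_i)$ and quoting the usual Milnor sequence, because $X$ is an arbitrary locally compact metrizable space and need not be locally contractible, so its singular and \v Cech theories may disagree and $\operatorname{Tel}(K_i)\to X$ need not be a \v Cech-cohomology isomorphism.

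Instead I would route the first step through sheaf cohomology. Since a metrizable space is paracompact, \v Cech cohomology with the coefficient group $G$ agrees with sheaf cohomology $H^*(X;\underline G)$. Choosing a flabby resolution $0\to\underline G\to\mathcal I^\bullet$, the sheaf axiom for the nested cover $\{U_i\}$ gives $\Gamma(X,\mathcal I^p)=\lim_i\Gamma(U_i,\mathcal I^p)$, while flabbiness makes every restriction $\Gamma(U_{i+1},\mathcal I^p)\to\Gamma(U_i,\mathcal I^p)$ surjective, so the inverse sequence $\{\Gamma(U_i,\mathcal I^p)\}_i$ is Mittag--Leffler and $\derlim_i\Gamma(U_i,\mathcal I^p)=0$ for every $p$. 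Because an inverse sequence has cohomological dimension one, this cochain-level vanishing lets $\Gamma(X,\mathcal I^\bullet)=\lim_i\Gamma(U_i,\mathcal I^\bullet)$ compute the derived limit, and the algebraic $\derlim$-sequence for an inverse system of complexes, combined with the acyclicity of flabby sheaves ($H^n(\Gamma(U_i,\mathcal I^\bullet))=H^n(U_i)$ and $H^n(\Gamma(X,\mathcal I^\bullet))=H^n(X)$), yields
\[0\to\derlim H^{n-1}(U_i)\to H^n(X)\to\lim H^n(U_i)\to 0.\]
I expect this to be the main obstacle --- not that any single ingredient is hard, but that one must keep everything inside \v Cech/sheaf cohomology (the identification \v Cech $=$ sheaf on the paracompact $X$, and the cochain-level vanishing of $\derlim$ supplied by flabbiness) rather than slipping into a homotopy-invariant theory that would misbehave on a space that is not locally contractible.

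Finally I would trade $U_i$ for $K_i$. The chain $\cdots\subset K_{i-1}\subset U_i\subset K_i\subset U_{i+1}\subset\cdots$ exhibits both $\{U_i\}$ and $\{K_i\}$ as cofinal subsequences of the single increasing sequence $W_1\subset W_2\subset\cdots$ given by $W_{2i-1}=U_i$ and $W_{2i}=K_i$. Applying $H^n$ and using functoriality of restriction, $\{H^n(U_i)\}$ and $\{H^n(K_i)\}$ are cofinal subsystems of $\{H^n(W_j)\}$; since $\lim$ and $\derlim$ of an inverse sequence are unchanged under passage to a cofinal subsequence, $\lim_i H^n(U_i)\cong\lim_i H^n(K_i)$ and $\derlim_i H^{n-1}(U_i)\cong\derlim_i H^{n-1}(K_i)$. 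Substituting into the displayed sequence gives the assertion. Naturality, and independence of the chosen exhaustion, follow from functoriality of the restriction maps together with the same cofinality argument applied to a common interleaving of two exhaustions.
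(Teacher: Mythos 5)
You cannot be compared against an in-paper argument here, because the paper does not prove this statement: Theorem \ref{petkova} is quoted as Petkova's result \cite{Pe1} (also rediscovered by Massey \cite{Mas}), and for a proof the paper refers the reader to \cite{M00}. Judged on its own merits, your proof is correct, and every ingredient is standard: \v Cech cohomology of a paracompact space with constant coefficients agrees naturally with sheaf cohomology; sections of a sheaf over the increasing union $X=\bigcup_i U_i$ form the inverse limit $\Gamma(X,\mathcal I^p)=\lim_i\Gamma(U_i,\mathcal I^p)$; flabbiness makes the towers $\{\Gamma(U_i,\mathcal I^p)\}_i$ degreewise surjective, hence Mittag--Leffler, so the standard $\lim$--$\derlim$ exact sequence for an inverse limit of cochain complexes (Weibel, Thm.\ 3.5.8) applies; flabby sheaves restrict to flabby sheaves on opens, giving $H^n(\Gamma(U_i,\mathcal I^\bullet))\cong H^n(U_i)$; and the interleaving $K_{i-1}\subset\Int K_i\subset K_i\subset\Int K_{i+1}$, together with invariance of $\lim$ and $\derlim$ of towers under passage to cofinal subsequences, trades the open exhaustion for the compact one. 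Your decision to avoid the mapping telescope is also the right call: $X$ need not be locally contractible, so a homotopy-theoretic Milnor sequence would compute the wrong theory. Two points deserve a sentence more than you gave them. First, the interleaving step silently uses that the comparison isomorphism between \v Cech and sheaf cohomology is natural in the space, so that the tower $\{H^n(U_i)\}$ produced by the sheaf argument and the tower $\{H^n(K_i)\}$ in the statement really do splice into a single inverse sequence of \v Cech groups and restriction maps; this is true but should be stated. Second, ``naturality'' of the resulting sequence is left vague: you should say with respect to which maps it is asserted (e.g.\ proper maps, which after reindexing carry one compact exhaustion into another), and note that your common-interleaving device is precisely what establishes both independence of the chosen exhaustion and this naturality.
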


This result was also rediscovered by Massey \cite{Mas}*{Theorem 4.22}.
We refer to \cite{M00} for a proof and further references and discussion.

The purpose of the present paper is to obtain a dual theorem for homology.
Or rather to understand the natural class of metrizable spaces for which it should be valid.

A class of metrizable spaces that is straightforwardly dual to locally compact separable metrizable spaces
is well-known.
Namely, metrizable spaces whose directed set of open covers (ordered by refinement) contains 
a cofinal sequence are characterized as follows.

\begin{theorem} \label{uc} {\rm (Nagata \cite{Nag}, Monteiro--Peixoto \cite{MoP}, Levshenko \cite{Lev}, 
Atsuji \cite{At}, Rainwater \cite{Rai}; see also \cite{Mrow}, \cite{Bee})}

(a) The following are equivalent for a metrizable space $X$:
\begin{itemize}
\item the directed set of open covers of $X$ contains a cofinal sequence;
\item the set $X'$ of cluster points of $X$ is compact;
\item the finest uniformity on $X$ is metrizable;
\item every Hausdorff quotient of $X$ is metrizable;
\item $X$ admits a metric satisfying any of the equivalent conditions in (b) below.
\end{itemize}

(b) The following are equivalent for a metric space $M$:
\begin{itemize}
\item every continuous map of $M$ into any metric space is uniformly continuous;
\item $d(A,B)>0$ for any disjoint closed $A,B\subset M$;
\item every open cover of $M$ has a positive Lebesgue number;
\item the set $M'$ of cluster points of $M$ is compact, and for each $\eps>0$, the complement of
the $\eps$-neighborhood of $M'$ is uniformly discrete.
\end{itemize}
\end{theorem}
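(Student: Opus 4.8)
The plan is to prove the metric statement (b) first, as the analytic core, and then to lift it to (a) using compactness of the derived set $X'$ as the pivot. Throughout (b) I write $M'$ for the set of cluster points and, for $\eps>0$, $S_\eps=\{x\in M:d(x,M')\ge\eps\}$, so that (iv) reads ``$M'$ is compact and each $S_\eps$ is uniformly discrete.'' I would run the cycle (i)$\Rightarrow$(ii)$\Rightarrow$(iv)$\Rightarrow$(iii)$\Rightarrow$(i), where (i)--(iv) are the four listed conditions in order. Three of the four arrows are short. For (i)$\Rightarrow$(ii): if disjoint closed sets $A,B$ had $d(A,B)=0$, then $x\mapsto d(x,A)/(d(x,A)+d(x,B))$ would be a continuous map to $\R$ that is not uniformly continuous. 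For (iii)$\Rightarrow$(i): given continuous $f\:M\to N$ and $\delta>0$, a positive Lebesgue number of the open cover $\{f^{-1}(B(y,\delta/2)):y\in N\}$ is a modulus of uniform continuity for $f$. And (iv)$\Rightarrow$(iii) combines the Lebesgue lemma on the compact set $M'$ with the observation that points of $S_\eps$ are uniformly isolated in $M$ (a point within $\eps/2$ of one would itself lie in $S_{\eps/2}$, contradicting uniform discreteness), so that small balls far from $M'$ are singletons.

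The crux of (b) is (ii)$\Rightarrow$(iv), which I prove contrapositively. If $M'$ is non-compact, choose $p_n\in M'$ with no cluster point in $M$ (possible since $M'$ is closed, so any cluster point of points of $M'$ would lie in $M'$); as each $p_n$ is a limit point, pick $q_n\neq p_n$ with $d(p_n,q_n)<1/n$. Then $\{p_n\}$ and $\{q_n\}$ have no cluster points, hence are closed, and after discarding coincidences they are disjoint and at distance $0$, so (ii) fails. If instead some $S_\eps$ is not uniformly discrete, take distinct $u_n,v_n\in S_\eps$ with $d(u_n,v_n)\to0$; since every point of $S_\eps$ is at distance $\ge\eps$ from $M'$, neither sequence can cluster (a cluster point would lie in $M'$), so $\{u_n\}$ and $\{v_n\}$ are closed, and after thinning they are disjoint and at distance $0$. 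This closes the cycle and proves (b).

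For (a) the pivot is that $X'$ is a topological invariant, so ``$X'$ compact'' is meaningful without a choice of metric. The equivalence with ``admits a metric satisfying (b)'' is immediate in one direction, since condition (iv) forces $M'=X'$ to be compact; the reverse is the main obstacle: given $X'$ compact I must construct a compatible metric in which each $S_\eps$ is uniformly discrete. Here I would start from any bounded compatible metric $\rho$, stratify the isolated set $X\but X'$ by the shells $\{1/(n{+}1)\le\rho(x,X')<1/n\}$, and rescale $\rho$ on pairs of isolated points shell by shell so as to make each shell uniformly discrete while preserving convergence to points of $X'$; a possibly uncountable isolated part causes no trouble, because points far from $X'$ have no cluster points and may be spread apart freely. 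The remaining bullets of (a) I would route through classical uniform-space theory: the uniform covers of the fine uniformity are exactly the open covers (every open cover of a metrizable space is normal), so ``countable cofinal family of open covers'' $\Leftrightarrow$ ``fine uniformity has a countable base'' $\Leftrightarrow$ ``fine uniformity metrizable''; and a compatible metric induces the fine uniformity precisely when every continuous map out of it to a metric space is uniformly continuous, i.e.\ precisely when it satisfies (b). This ties these three conditions to ``$X'$ compact.''

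For the metrizability of Hausdorff quotients I would argue both directions against the pivot. If $X'$ is non-compact, choose $p_n\in X'$ with no cluster point in $X$; the set $D=\{p_n\}$ is closed, discrete, and locally finite, and collapsing it to a point yields a Hausdorff quotient (as $D$ is closed in a normal space) in which the image point has no countable neighborhood base: a diagonal ``nibbling'' of pairwise disjoint balls around the points of $D$ defeats any proposed countable base. Hence the quotient is not first countable, so not metrizable. The converse---that $X'$ compact makes every Hausdorff quotient metrizable---is the subtler implication; I expect to obtain it by pushing the (now metrizable) fine uniformity forward along the quotient map and checking that the pushed-forward uniformity still has a countable base, the point being that the UC-metric makes any closed discrete set uniformly isolated away from the compact $X'$. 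Here I would lean on Nagata's argument rather than reproving it. The genuine obstacles are therefore the explicit construction of the uniformly continuous metric from compactness of $X'$ and this last quotient direction; everything else is bookkeeping.
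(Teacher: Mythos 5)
First, a point of reference: the paper does not prove Theorem \ref{uc} at all --- it is quoted as a classical result, with citations to Nagata, Monteiro--Peixoto, Levshenko, Atsuji, Rainwater, Mr\'owka and Beer --- so your attempt can only be judged against the literature, not against an argument in the text. Judged that way, your part (b) is essentially complete: the cycle (i)$\Rightarrow$(ii)$\Rightarrow$(iv)$\Rightarrow$(iii)$\Rightarrow$(i) is sound, and the contrapositive proof of (ii)$\Rightarrow$(iv) works, though ``after discarding coincidences'' hides a real (if routine) step: the two sequences you build may interleave (e.g.\ $q_n=p_{n+1}$ for all $n$, in which case every $q_n$ coincides with some $p_m$ and there is nothing left after discarding), and one must instead select pairwise disjoint pairs --- which is possible because any point meeting infinitely many pairs would be a cluster point of $\{p_n\}$ --- or split a single sequence into evens and odds.

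The genuine gaps are both in part (a). First, your only bridge from ``$X'$ compact'' into the cluster of conditions \{cofinal sequence of covers, metrizable fine uniformity, existence of a UC metric\} is the shell-by-shell rescaling, and as stated it is not a construction: ``rescaling $\rho$ on pairs of isolated points'' does not by itself produce a metric, since the triangle inequality couples distances within a shell to distances through $X'$ and through neighboring shells, and you give no rule that respects it. This gap is avoidable with tools you already have: prove ``$X'$ compact $\Rightarrow$ cofinal sequence of open covers'' directly, by letting $C_n$ consist of the $\frac1n$-balls centered at points of the $\frac1n$-neighborhood of $X'$ together with the singletons of all remaining points (which are necessarily isolated, since every non-cluster point is isolated); the relative Lebesgue lemma (the paper's Lemma \ref{lebesgue}, applied to the compactum $X'$) shows that every open cover is refined by some $C_n$. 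Then your own uniform-space chain (cofinal sequence $\Rightarrow$ countable base for the fine uniformity $\Rightarrow$ metrizable fine uniformity $\Rightarrow$ a compatible metric satisfying (b)) yields the UC metric with no metric surgery at all. Second, the implication ``$X'$ compact $\Rightarrow$ every Hausdorff quotient is metrizable'' is simply not proved: pushing the fine uniformity forward along a quotient map does not obviously yield a uniformity with a countable base, nor one inducing the quotient topology (that is exactly the hard point), and you explicitly defer to ``Nagata's argument.'' Such deferral is defensible given that the paper itself only cites the literature here, but then the proposal should be honest that, as a self-contained argument, it establishes (b), the equivalence of the cofinal-sequence, fine-uniformity and UC-metric conditions with one another, the easy direction of the quotient condition (non-compact $X'$ gives a non-metrizable quotient --- your diagonal nibbling argument is correct), but neither of the two implications out of ``$X'$ compact.''
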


Let us note that a metrizable space $X$ is compact if and only if {\it every} metric on $X$ satisfies 
the conditions in (b) \cite{Mrow}.
Metric spaces satisfying the conditions of (b) are called {\it UC spaces} (and also ``Atsuji spaces'' and
``Lebesgue spaces''), and so metrizable spaces satisfying the conditions of (a) are called 
{\it UC-metrizable}.
Thus a metrizable space $X$ is UC-metrizable if and only if it contains a compactum $K$ such that $X\but K$
is discrete.

\begin{example} \label{uc nullseq}
Let $\N=\{1,2,\dots\}$ with the discrete topology.
The null-sequence%
\footnote{The {\it null-sequence} $\nullseq_{i=1}^\infty X_i$ is the subset 
$\bigcup_{i=1}^\infty pt^{i-1}\x X_i\x pt^\infty$ of the product $\prod_{i=1}^\infty (X_i\sqcup pt)$.
Alternatively, $\nullseq_{i=1}^\infty X_i$ can be defined as the limit of the inverse sequence
$\dots\to X_1\sqcup X_2\sqcup X_3\sqcup pt\to X_1\sqcup X_2\sqcup pt\to X_1\sqcup pt$
(see \cite{M00}*{\S\ref{book:metric wedge}}).}
$\nullseq_{i=1}^\infty\N$ of copies of $\N$ is UC-metrizable (since it has only one cluster point)
but is not locally compact.
Clearly, $\nullseq_{i=1}^\infty\N$ is homeomorphic to the subset 
$\{(\frac1m,\frac1{mn})\mid m,n\in\N\}\cup\{(0,0)\}$ of the plane.
\end{example} 

Every discrete set may be understood as a $0$-dimensional polyhedron.
We refer to \cite{M00} for a treatment of polyhedra.
In particular, our polyhedra are always endowed with the metric topology (and not the weak topology).

\subsection{Coronated polyhedra}
We call a metrizable space $X$ a {\it coronated polyhedron}%
\footnote{Initially the author spoke of ``compactohedra'' (see version 1 of the arXiv preprint \cite{M-III}). 
But later it became clear that to better understand coronated polyhedra it helps to study coronated ANRs as well 
(as Theorem \ref{main-poly} depends on Theorem \ref{main-ANR}), which was one of the reasons to change the terminology.}
if it contains a compactum $K$ such that $X\but K$ is a polyhedron.
Let us note that if $V$ is the set of vertices of some triangulation of the polyhedron $X\but K$, then 
$K\cup V$ is UC-metrizable.

Clearly, coronated polyhedra include compacta and polyhedra.

\begin{example}
If $X$ is a locally compact subset of $S^n$, then $S^n\but X$ is a coronated polyhedron 
(see Theorem \ref{co-locally-compact}(a)).
\end{example}

\begin{example} Some standard examples in introductory courses of topology, such as the ``comb and flea'' 
space $\{\frac1n\mid n\in\N\}\x[0,1]\cup(0,1]\x\{1\}\cup\{(0,0)\}$ and the ``topologist's sine curve'', 
$\{(x,\sin\frac1x)\mid x\in (0,1]\}\cup\{0\}\x[-1,1]$, are coronated polyhedra.
\end{example}

\begin{example} \label{nullseq polycom}
The null-sequence $\nullseq_{i\in\N} P_i$ of arbitrary polyhedra $P_i$ is a coronated polyhedron.
If each $P_i$ is non-compact, then $\nullseq_{i\in\N} P_i$ is not locally compact.
\end{example} 

Some further examples of coronated polyhedra are discussed in \S\ref{examples}.

\begin{maintheorem}\label{main-poly} The following are equivalent for a metrizable space $X$:
\begin{enumerate} 
\item $X$ is a coronated polyhedron;
\item $X$ admits a countable resolution consisting of polyhedra and polyhedral maps;
\item $X$ admits a resolution of the form $\dots\xr{p_2} R_2\xr{p_1}R_1$, where each $R_i$ 
is a polyhedron and each $p_i$ is a polyhedral map.
\end{enumerate}
\end{maintheorem}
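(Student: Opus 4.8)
The plan is to deduce both hard implications from the companion result for ANRs. Assuming Theorem~\ref{main-ANR}, which I take to characterize coronated ANRs as exactly the metrizable spaces admitting a (countable, equivalently sequential) resolution by ANRs, the argument splits into two formal reductions and two ``polyhedralization'' lemmas. I would first dispose of the equivalence of (2) and (3). Since (3) is a special case of (2), only (2)$\Rightarrow$(3) requires proof, and this is purely formal: every countable directed set admits a cofinal sequence (enumerate its elements $a_1,a_2,\dots$ and inductively choose $b_n$ above both $b_{n-1}$ and $a_n$ by directedness), while the restriction of a resolution to a cofinal subsystem is again a resolution. Reindexing the restricted system yields an inverse sequence $\dots\xr{}R_2\xr{}R_1$ with polyhedral bonding maps, so (2) gives (3).

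For (1)$\Rightarrow$(3): a polyhedron is an ANR, so a coronated polyhedron is a coronated ANR, and Theorem~\ref{main-ANR} supplies a sequential ANR-resolution $\dots\xr{q_2}A_2\xr{q_1}A_1$ of $X$ with maps $p_i\colon X\to A_i$. I would then replace it by a polyhedral one. Each $A_i$ has the homotopy type of a polyhedron $R_i$; fix homotopy equivalences $h_i\colon A_i\to R_i$ with homotopy inverses $g_i$, put $p_i'=h_i p_i\colon X\to R_i$, and take the new bonding maps to be polyhedral (simplicial, after subdivision) approximations to the composites $h_i q_i g_{i+1}$. Because the defining conditions of a resolution refer only to homotopy classes of maps into ANRs, and every ANR is homotopy equivalent to a polyhedron, this replacement preserves them. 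The only care required is to choose the approximations compatibly, rectifying the squares so that they commute up to the controlling covers uniformly along the sequence.

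For (3)$\Rightarrow$(1): again a sequential polyhedral resolution is an ANR-resolution, so Theorem~\ref{main-ANR} shows $X$ is a coronated ANR, giving a compactum $K_0$ with $A=X\setminus K_0$ an ANR. It then suffices to find a further compactum $C\subset A$ with $A\setminus C$ a polyhedron, for then $K=K_0\cup C$ is compact and $X\setminus K=A\setminus C$. This upgrade is the crux. The idea is to exploit the extra rigidity of a polyhedral (rather than merely ANR) resolution: one expects it to force $A$ to be locally compact and finite-dimensional away from $K_0$, so that the maps $p_i$ stabilize locally and can be arranged to be polyhedral embeddings onto subpolyhedra of the finite-dimensional $R_i$ outside shrinking neighborhoods of $K_0$. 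Patching these local polyhedral charts---using that $A$ is an ANR to control the gluing, and that the vertex set accumulates only on $K_0$, so the triangulation is locally finite off $K_0$ (cf.\ the UC-metrizability remark)---should produce a triangulation of $A\setminus C$ for a suitable compactum $C$.

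The step I expect to be genuinely hard is precisely this last upgrade from an ANR complement to a polyhedron. Being a finite-dimensional, locally compact ANR is \emph{not} enough to be a polyhedron---non-triangulable topological manifolds are counterexamples---so the argument cannot rest on dimension alone but must use the specific corona/end structure exhibited by the resolution maps. Making rigorous the claim that a polyhedral resolution stabilizes to a locally finite triangulation away from $K_0$, and that the resulting local polyhedral charts glue into a global triangulation of $X\setminus K$, is where the real work lies.
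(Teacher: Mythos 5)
Your reduction of (1)$\Rightarrow$(3) to Theorem \ref{main-ANR} fails at the ``polyhedralization'' step, and this is a fatal gap rather than a technicality. Resolutions are not homotopy-theoretic objects: conditions (B1) and (B2) are about actual images $p^m_n(X_m)$ lying in actual neighborhoods and actual preimages $(p^\infty_n)^{-1}(D)$ of open covers, not about homotopy classes of maps. (What depends only on homotopy classes is the weaker notion of an \emph{expansion}, i.e.\ an inverse system in the pro-homotopy category; the entire point of resolutions is that they carry more than the shape of $X$.) Replacing each ANR term $A_i$ by a homotopy equivalent polyhedron $R_i$ and the bonding maps by polyhedral approximations destroys (B2) already in trivial examples: the constant sequence $\dots\to[0,1]\to[0,1]$ with identity bonding maps and projections $\id$ is a resolution of $[0,1]$, but after replacing each term by the homotopy equivalent polyhedron $pt$ the resulting system is a resolution of $pt$, not of $[0,1]$, since no open cover of $pt$ pulls back to a refinement of a nontrivial cover of $[0,1]$. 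Note also that if your replacement scheme were valid, it would solve the Problem stated after Theorem \ref{main-ANR} (every ANR is homotopy equivalent to a polyhedron), which the paper leaves open. The paper's route to (1)$\Rightarrow$(3) is entirely different and genuinely geometric: Theorem \ref{compactohedral} constructs from the decomposition $X=K\cup P$ a compactohedral inverse sequence with limit $X$, using a resolution of the compactum $K$ by finite complexes, the extended mapping telescope, Sakai's simplicial approximation theorem, and adjunction spaces $R_n=|L_{[0,n]}\cup_{\phi_n}K_n|$; then Theorem \ref{resolution theorem} verifies separately, via the Lebesgue lemma (Lemma \ref{lebesgue}), that every weakly pre-compactohedral inverse sequence satisfies (B1) and (B2) for its limit.

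Your (3)$\Rightarrow$(1) has two further problems. First, Theorem \ref{main-ANR} does not say what you take it to say: its condition (2) requires the \emph{images} $p^i_j(X_i)$ of the bonding maps to be ANRs, not merely the terms, and the unrestricted version (ANR terms only) is precisely the paper's open Problem. For a polyhedral resolution this is repairable, since images of polyhedral maps are polyhedra and hence ANRs, but that observation has to be made, and what is actually invoked is Theorem \ref{coronated-anr} rather than Theorem \ref{main-ANR} as you state it. Second, the upgrade from an ANR complement to a polyhedral complement, which you correctly identify as the crux, is left as a plan; the paper's Theorem \ref{coronated-poly} carries it out not by abstract patching of local charts but by reusing the explicit sets $N_k$ from the proof of Theorem \ref{coronated-anr}: $N_k$ consists of the points where $p^\infty_k$ restricts to a homeomorphism of a neighborhood onto an open subset $V$ of $p^i_k(X_i)$; such $V$ is open in an affine polyhedron and hence is one, the transition maps between the $N_k$ are restrictions of the polyhedral bonding maps and hence polyhedral homeomorphisms, and the union $N=\bigcup_k N_k$ is then a polyhedron by the atlas lemma --- with the \emph{same} compact set $Z=X_\infty\setminus N$ as in the ANR case, so no further compactum $C$ is needed. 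Your instinct that local compactness and finite-dimensionality alone cannot suffice (non-triangulable manifolds) is sound, and it is exactly why the proof must use the polyhedral bonding maps themselves rather than abstract properties of the complement.
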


Resolutions are a basic concept of shape theory (cf.\ \cite{Mard}, \cite{MS}).
We refer to \cite{M00} for a concise treatment of resolutions in the metrizable case.

Resolutions of non-compact spaces are, generally speaking, inverse systems indexed by uncountable 
directed sets.
The author is unaware of any previous literature discussing sequential (or countable) ANR resolutions for
any spaces other than ANRs and UC-metrizable spaces.

Let us note that (3) trivially implies (2) in Theorem \ref{main-poly}.
The converse implication follows since every countable directed set contains a cofinal sequence, and 
passage to a cofinal subset preserves the property of being a resolution 
(see \cite{M00}*{Lemmas \ref{book:chain} and \ref{book:cofinal-resolution}}).
The equivalence of (1) and (2) is proved in Corollary \ref{poly-main}, which also contains some additional
characterizations of coronated polyhedra in terms of inverse sequences.

\subsection{Coronated ANRs}
We call a metrizable space $X$ a {\it coronated ANR} if it contains a compactum $K$ such that 
$X\but K$ is an ANR.

\begin{example}
If $X$ is a locally compact subset of the Hilbert cube $I^\infty$, then $I^\infty\but X$ is a coronated ANR 
(see Theorem \ref{co-locally-compact}(b)).
\end{example}

\begin{example} \label{nullseq anr}
The null-sequence $\nullseq_{i\in\N} P_i$ of arbitrary ANRs $P_i$ is a coronated ANR.
\end{example}

Some further examples of coronated ANRs are discussed in \S\ref{examples}.

\begin{maintheorem}\label{main-ANR} The following are equivalent for a metrizable space $X$:
\begin{enumerate}
\item $X$ is a coronated ANR;
\item $X$ admits a resolution $\dots\to X_2\to X_1$, where the image of each $X_i$ in each $X_j$ is an ANR;
\item $X$ admits a sequential ANR resolution whose bonding maps are embeddings.
\end{enumerate}
\end{maintheorem}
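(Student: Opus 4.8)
The plan is to prove the cycle $(1)\Rightarrow(3)\Rightarrow(2)\Rightarrow(1)$. The implication $(3)\Rightarrow(2)$ is immediate: if every bonding map is an embedding, then so is each composite bonding map $X_i\to X_j$ (for $j\le i$), whence the image of $X_i$ in $X_j$ is homeomorphic to $X_i$ and is therefore an ANR. All the substance lies in the remaining two implications, and I expect the construction in $(1)\Rightarrow(3)$ to be the main obstacle.

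For $(1)\Rightarrow(3)$ the idea is to build the resolution by thickening only the compact part, leaving the ANR part $U=X\setminus K$ untouched. Fix a bounded metric $d$ on $X$ and embed $K$ in the Hilbert cube $I^\infty$. Choose compact ANR neighbourhoods $T_1\supseteq T_2\supseteq\cdots$ of $K$ in $I^\infty$ with $\bigcap_i T_i=K$. Since $T_1$ is an ANR and $K$ is closed in $X$, the inclusion $K\hookrightarrow T_1$ extends to a map $\phi\colon W\to T_1$ on an open neighbourhood $W=\{x:d(x,K)<\delta\}$ of $K$ in $X$. I would then define $X_i$ by gluing $U$ to the thickening $T_i$ along $\phi$ over a neighbourhood of $K$, arranging the construction so that (i) $X_i$ coincides with $X$ outside a $\tfrac1i$-neighbourhood of $K$, (ii) the inclusions $X_{i+1}\subseteq X_i$ serve as bonding maps, and (iii) $\bigcap_i X_i=X$ (points of $T_i\setminus K$ eventually leave the system because $\bigcap_i T_i=K$). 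That each $X_i$ is an ANR would follow from Hanner's gluing theorem, since $X_i$ is a union of the two ANRs $U$ and $T_i$ overlapping in a neighbourhood that is again an ANR; getting the collars to shrink compatibly so that (ii) and (iii) hold simultaneously is the delicate point of the construction.

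The crux is to verify that $\cdots\hookrightarrow X_2\hookrightarrow X_1$, together with the inclusion of $X$, is a \emph{resolution}. The extension half of the resolution axioms requires that every map $f\colon X\to P$ into an ANR, specified up to an open cover, factor through some $X_i$; extending $f$ to a neighbourhood $N$ of $X$ in $X_1$, one needs $X_i\subseteq N$ for some $i$. In general $\bigcap_i X_i=X\subseteq N$ does \emph{not} force $X_i\subseteq N$, which is precisely why non-compact spaces need not admit sequential resolutions. Here it is rescued by design: outside the $\tfrac1i$-neighbourhood of $K$ one has $X_i=X\subseteq N$, while inside it the thickening satisfies $X_i\subseteq T_i\cup\Cl(W)$ with $\bigcap_i T_i=K$ compact, so compactness of $K$ yields $T_i\subseteq N$ for all large $i$. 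Thus confining the thickening to the compactum $K$ is exactly what makes the sequential resolution axioms hold, and this verification is where I expect the real work to concentrate.

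For $(2)\Rightarrow(1)$, let $K$ be the set of points of $X$ possessing no ANR neighbourhood. Its complement is open, so $K$ is closed; off $K$ the space $X$ is locally an ANR and hence, being metrizable, is an ANR. It therefore remains to prove $K$ compact. Suppose not: then $K$ contains a closed, uniformly separated sequence $(x_n)$ with no limit point. Using the resolution $\mathbf p\colon X\to(X_i)$ whose images $q_{ji}(X_i)$ are ANRs, one argues that a point $x$ acquires an ANR neighbourhood as soon as, at some finite stage $i$, the map $p_i$ is near $x$ a local embedding onto the ANR image $q_{ji}(X_i)$. Since the system is sequential and $(x_n)$ is discrete, all but finitely many $x_n$ would be resolved at a finite stage and so obtain ANR neighbourhoods, contradicting $x_n\in K$. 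Hence $K$ is compact and $X$ is a coronated ANR. Turning the heuristic ``resolved at a finite stage'' into an honest local ANR neighbourhood is the technical point here, and it is exactly where the ANR-image hypothesis of $(2)$ enters.
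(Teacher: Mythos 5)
Your cycle $(1)\Rightarrow(3)\Rightarrow(2)\Rightarrow(1)$ is exactly the one the paper follows (Corollary~\ref{ANR-main}), and your $(3)\Rightarrow(2)$ is fine; but both substantive implications have genuine gaps, and in each case the gap is precisely the content of the paper's argument rather than a routine verification. In $(1)\Rightarrow(3)$, the step ``define $X_i$ by gluing $U$ to the thickening $T_i$ along $\phi$'' would fail. The extension $\phi\colon W\to T_1$ of the inclusion $K\hookrightarrow T_1$ is just some continuous map; it need not be injective, so the adjunction space $U\cup_\phi T_i$ identifies distinct points of $X$, and your conditions (ii) and (iii) --- which presuppose that $X$ sits inside $X_i$ --- cannot hold. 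Nor is this fixable by choosing $\phi$ better: a neighborhood $W$ of $K$ in $X$ need not be separable (take $K$ a single point $p$ and $X\setminus K$ the uncountable discrete set consisting of the points at distance $1/n$, $n\in\N$, on uncountably many spines of a hedgehog with center $p$), whereas after your gluing the image of a neighborhood of $K$ lies in the compactum $T_i$ with its subspace topology and hence is separable; so in such examples the image of $X$ in $X_i$ is never a copy of $X$. The missing idea is the paper's mapping-cylinder construction (Theorem~\ref{coronated-anr2}): with $M$ an AR containing $K$, $f\colon X\to M$ extending $\id_K$, and $M_i$, $U_i$ shrinking neighborhoods of $K$ in $M$ and in $X$, the space $X_i$ is assembled from the metric mapping cylinder of $f|_{U_i}\colon U_i\to M_i$ reduced relative to $K$, glued along the open cylinder $(U_i\setminus K)\times(0,1]$ to $(U_i\setminus K)\times(0,1]\cup (X\setminus K)\times\{1\}$. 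The cylinder direction keeps a genuine, unidentified copy of $X$ at the top while the AR thickening sits at the bottom. (Your observation that compactness of $K$ is what rescues the resolution axioms for the nested sequence is correct and is exactly how the paper concludes.)

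In $(2)\Rightarrow(1)$, your definition of $K$ and the deduction that $X\setminus K$ is an ANR (local ANRs are ANRs) are fine, but the compactness of $K$ --- which you reduce to the claim that for a discrete sequence $(x_n)$ in $K$ ``all but finitely many $x_n$ would be resolved at a finite stage'' --- is left without any mechanism, and as stated the claim is self-defeating: points of $K$ are by definition never resolved. The argument has to run the other way: assuming $K$ is non-compact, one must contradict the resolution axioms, and this requires real work with both (B1) and (B2). The paper does it in two steps. First (Lemma~\ref{B2-application}), property (B2) is applied twice, to covers manufactured from the discrete sequence, to produce a finite stage $k$ at which $p^\infty_k$ embeds an entire closed neighborhood of the image of a tail of the sequence, that image having no cluster points in $X_k$. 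Second (Theorem~\ref{coronated-anr}), since each $x_n$ is unresolved, arbitrarily close to each image point $p^\infty_k(x_n)$ there are points $w_n\in p^i_k(X_i)\setminus p^\infty_k(X)$; the set of all $w_n$ is closed in $X_k$, so its complement is a neighborhood of $p^\infty_k(X)$, and property (B1) forces the image of some $X_i$ into that complement --- contradicting $w_n\in p^i_k(X_i)$. This interplay occupies roughly two pages of the paper and is entirely absent from your sketch, so the implication remains unproved as written.
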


\begin{problem}
Suppose that a metrizable space $X$ admits a resolution $\dots\to X_2\to X_1$ where each $X_i$ is an ANR.
Is $X$ a coronated ANR?
\end{problem}

\subsection{Steenrod--Sitnikov homology and homotopy}

Using Theorem \ref{main-poly}, or rather its more refined version, we obtain the following dualization 
of Petkova's short exact sequence (Theorem \ref{petkova}).

\begin{maintheorem}\label{main-ses}
If $\dots\to P_2\to P_1$ is an ANR resolution of a coronated polyhedron $X$, then its Steenrod--Sitnikov
homology fits in a natural short exact sequence
\[0\to\derlim H_{n+1}(P_i)\to H_n(X)\to\lim H_n(P_i)\to 0.\]
\end{maintheorem}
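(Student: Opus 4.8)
The plan is to reduce the statement to the purely algebraic Milnor sequence for the homotopy inverse limit of a tower of chain complexes. Since the ANR resolution is given as an inverse sequence $\dots\to P_2\to P_1$, the decisive first step is to identify Steenrod--Sitnikov homology $H_n(X)$ with the homology of the homotopy inverse limit $\holim C_*(P_i)$ of the singular chain complexes of the $P_i$. By its defining property as the strong homology of an ANR resolution of $X$, and because the present resolution is sequential, this strong homology collapses to the homology of the ordinary homotopy limit of the tower $C_*(P_i)$, with no contribution from the derived limits $\lim\nolimits^j$ for $j\ge 2$ (these vanish on inverse sequences). I expect this identification to be the main obstacle: for compacta it is Milnor's classical computation, and the real work is to transport it to the non-compact coronated setting, where $X$ need not be the topological inverse limit of the $P_i$ and the resolution controls only the strong shape. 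This is exactly the point at which one uses the defining features of Steenrod--Sitnikov homology, namely the map-excision and $\prod$-additivity axioms.

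Granting this identification, the remainder is routine homological algebra. I would model $\holim C_*(P_i)$ as the mapping fibre of the endomorphism $1-\sigma$ of $\prod_i C_*(P_i)$, where $\sigma$ is induced by the bonding maps $p_i$; concretely this is the total complex with $n$-th term $\prod_i C_n(P_i)\oplus\prod_i C_{n+1}(P_i)$. Because products are exact in the category of abelian groups, one has $H_n\bigl(\prod_i C_*(P_i)\bigr)=\prod_i H_n(P_i)$, and the long exact homology sequence of the mapping fibre reads
\[\dots\to\prod_i H_{n+1}(P_i)\xrightarrow{1-\sigma_*}\prod_i H_{n+1}(P_i)\to H_n(\holim)\to\prod_i H_n(P_i)\xrightarrow{1-\sigma_*}\prod_i H_n(P_i)\to\dots.\]
Since for a tower $\ker(1-\sigma_*)=\lim H_n(P_i)$ and $\coker(1-\sigma_*)=\derlim H_n(P_i)$, splicing this long exact sequence produces precisely the asserted short exact sequence
\[0\to\derlim H_{n+1}(P_i)\to H_n(X)\to\lim H_n(P_i)\to 0.\]

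Finally, naturality follows from functoriality of all the constructions involved: a map of coronated polyhedra induces, after passage to a cofinal common refinement of the two towers, a level map of inverse sequences, hence a map of the two-term complexes $\prod_i C_*(P_i)$ compatible with $1-\sigma$ and thus with the entire diagram, yielding a morphism of short exact sequences. The one point deserving care here is again the first step, namely that the identification of $H_n(X)$ with $H_n(\holim C_*(P_i))$ is itself natural and independent of the chosen sequential resolution; this follows once one knows that any two ANR resolutions of $X$ are pro-homotopy equivalent and that strong homology is invariant under such equivalences.
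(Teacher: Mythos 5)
Your reduction has a genuine gap at exactly the point you yourself flag as ``the main obstacle'': the identification $H_n(X)\simeq H_n\bigl(\holim C_*(P_i)\bigr)$. Steenrod--Sitnikov homology is not defined as the strong homology of an ANR resolution; it is defined as $\colim H_n(K_\alpha)$ over the compact subsets $K_\alpha\subset X$ (the paper stresses this in Example \ref{comb1}, whose whole point is that the resolution computation looks, at first sight, incompatible with that definition). For compacta the identification you want is Milnor's theorem; for a non-compact coronated polyhedron it is precisely the content of Theorem \ref{main-ses}, because once one has it, the passage to the $\lim$/$\derlim$ sequence is the routine algebra you describe. So your proposal reduces the theorem to an unproved assertion essentially equivalent to the theorem itself. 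Saying that ``this is exactly the point at which one uses map excision and $\prod$-additivity'' does not fill the gap: no argument using those axioms is given, and it is not at all formal how they would apply, since $X$ need not be the inverse limit of the $P_i$ and, even when it is, the theory is computed from compact subsets of $X$, not from the tower.

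For comparison, the paper concentrates its work exactly there, and does it geometrically rather than chain-theoretically. It first replaces the given resolution by a pre-compactohedral one $\dots\to R_2\to R_1$ (Theorem \ref{compactohedral}, which uses the coronated structure; it is a resolution by Theorem \ref{resolution theorem}). It then forms the extended telescope $R_{[0,\infty]}$, which contains $X$ as a closed subspace, so that the compacta-based theory and its axioms apply to honest closed pairs; the exact sequence of the triple $(R_{[0,\infty]},\,X\cup R_0,\,R_0)$ gives $H_{n+1}(R_{[0,\infty]},\,X\cup R_0)\simeq H_n(X)$. The crucial step is Lemma \ref{two-five}: the map collapsing $X$ to the point $\STAR$ induces an isomorphism onto $H_{n+1}(R_{[0,\infty)}^\Star,\,\STAR\cup R_0)$; its proof uses map excision on the compact parts $K_i$ and the condition that the bonding maps are homeomorphisms off the associated closed subpolyhedra $L_i$. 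The pinched telescope $R_{[0,\infty)}^\Star$ then carries the $\lim$/$\derlim$ sequence by a standard argument (the counterpart of your mapping-fibre computation), and finally the statement is transported to an arbitrary sequential ANR resolution by comparison maps of telescopes. Without an argument playing the role of Lemma \ref{two-five}, your outline cannot be completed.
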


The precise meaning of naturality is explained in the statement of Theorem \ref{main-ses2}, which also
notes that the proof of Theorem \ref{main-ses} works for any generalized homology theory%
\footnote{By a generalized (co)homology theory we mean, as usual, a theory which satisfies 
the Eilenberg--Steenrod axioms except for the dimension axiom.}
$H_*$ on closed pairs of metrizable spaces which satisfies the map excision axiom and the axiom of 
additivity with respect to metric wedges (see \cite{M00}*{\S\ref{book:map excision}, 
\S\ref{book:coadditivity}} concerning these axioms).
These are essentially the same two axioms as those used by Milnor (in addition to the 
Eilenberg--Steenrod axioms) to characterize Steenrod homology on compacta \cite{Mi1}.

\begin{example}[Comb and Flea] \label{comb1}
Let $P=\{\frac1n\mid n\in\N\}\x[0,1]\cup (0,1]\x\{1\}\subset\R^2$ 
and let $X=P\cup\{(0,0)\}$.
Since $P$ is a polyhedron, $X$ is a coronated polyhedron.
Each $R_i\bydef X\cup[0,\frac1n]\x\{0\}$ is locally contractible and hence an ANR.
Since every neighborhood of $X$ in $\R^2$ contains some $R_i$, the inclusion maps
$\dots\to R_2\to R_1$ form a resolution of $X$ (see \cite{M00}*{Lemma \ref{book:semi-resolution}(a)}).

The inverse sequence $\dots\to H_1(R_2)\to H_1(R_1)$ is of the form 
$\dots\emb\bigoplus_{i=2}^\infty\Z\emb\bigoplus_{i=1}^\infty\Z$.
Hence its $\lim^1$ is isomorphic to $\prod_{i=1}^\infty\Z/\bigoplus_{i=1}^\infty\Z$ 
(see \cite{M00}*{Example \ref{book:jacob-ladder}}).
Therefore $\tilde H_0(X)\simeq\prod_{i\in\N}\Z\big/\bigoplus_{i\in\N}\Z$ and $H_1(X)=0$.

It might at first seem that this computation contradicts the definition of $H_n(X)$
as $\colim H_n(K_\alpha)$ over all compact $K_\alpha\subset X$.
But it does not.
Let $K=\{\frac1n\mid n\in\N\}\x\{0\}\cup\{(0,0)\}$.
Then $K$ is a compact subset of $X$ and $\tilde H_0(K)\simeq\prod_{i=1}^\infty\Z$.
It is easy to see that the inclusion induced map $\tilde H_0(K)\to\tilde H_0(X)$ is surjective and
its kernel is the subgroup $\bigoplus_{i=1}^\infty\Z$ of $\prod_{i=1}^\infty\Z$.
\end{example}

\begin{example} \label{comb2}
Using the notation of the previous example, let us choose the basepoint $x$ at the origin.
It is easy to see that the inclusion-induced map of Steenrod--Sitnikov homotopy sets 
$\pi_0(K,x)\to\pi_0(X,x)\simeq\colim\pi_0(K_\alpha,x)$ (where $K_\alpha$ runs over all
compact subsets of $X$) is surjective and identifies the classes of 
the maps $S^0\xr{\cong}\{(\frac1n,0),(0,0)\}\subset K$ for all $n\in\N$, but does not 
identify them with the class of the constant map 
(see \cite{M00}*{Example \ref{book:ssh-ash}} for the details).
Thus $\pi_0(X)$ consists of two elements.

On the other hand, the inverse sequence $\dots\to\pi_1(R_2)\to\pi_1(R_1)$ is of the form 
$\dots\emb\join_{i=2}^\infty\Z\emb\join_{i=1}^\infty\Z$.
This does not satisfy the Mittag-Leffler condition and consists of countable groups,
so $\derlim\pi_1(R_i)$ is uncountable (see \cite{M00}*{Theorem \ref{book:gray}}).

This does not yet rule out a short exact sequence of pointed sets
\[0\to\derlim\pi_1(P_i)\to\pi_0(X)\to\lim\pi_0(P_i)\to 0,\]
because trivial kernel does not imply injectivity for pointed sets.
However let $Y=X\cup E$, where $E=\{-1\}\x[0,1]\cup\{0,1\}\x[-1,0]$, and let
$Q_i=P_i\cup E$.
Then $\pi_0(Y)$ is trivial and $\derlim\pi_1(Q_i)$ is uncountable, so we cannot possibly 
have a short exact sequence of pointed sets
\[0\to\derlim\pi_1(Q_i)\to\pi_0(Y)\to\lim\pi_0(Q_i)\to 0.\]
\end{example}

It seems crucial for the latter example that $\pi_0(X)$ is not a group.
Yet the proof of Theorem \ref{main-ses} does not work to give an
affirmative solution of the following problem, as it uses excision in an essential way.

\begin{problem} If $\dots\to P_2\to P_1$ is an ANR resolution of a coronated polyhedron $X$, 
does its Steenrod--Sitnikov homotopy fit in a short exact sequence
\[0\to\derlim\pi_{n+1}(P_i)\to\pi_n(X)\to\lim\pi_n(P_i)\to 0\]
for $n\ge 1$?
\end{problem}

S. V. Petkova (refining previous work of Sklyarenko \cite{Sk71}) proved that Steenrod--Sitnikov homology and 
\v Cech cohomology are unique ordinary homology theories on the category of closed pairs of 
locally compact separable metrizable spaces that satisfy map excision, $\bigsqcup$-additivity 
and $\nullseq$-additivity \cite{Pe2} (homology), \cite{Pe1}*{Theorem 9} (cohomology).
The same approach works without assuming the dimension axiom, except that ``uniqueness'' is understood
in the sense that any natural transformation into another such theory is an isomorphism as long as it is 
an isomorphism on $pt$ (see \cite{M00}*{Theorem \ref{book:uniqueness2}(b)}).

As a byproduct of the proof of Theorem \ref{main-ses} we obtain a dual theorem.

\begin{maintheorem} \label{uniqueness2}
Let $h_*$ (resp.\ $h^*$) be a generalized (co)homology theory on closed pairs of coronated polyhedra
satisfying map excision, $\bigsqcup$-additivity and $\nullseq$-additivity.
Then any natural transformation of $h_*$ (resp.\ $h^*$) into another such theory is an isomorphism as long as 
it is an isomorphism on $pt$.
\end{maintheorem}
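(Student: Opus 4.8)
The plan is to treat the homology case in detail; the cohomology case is formally dual and in fact simpler, because a resolution $\dots\xr{p_2}R_2\xr{p_1}R_1$ turns the contravariant $h^*$ into a \emph{direct} system $h^*(R_1)\to h^*(R_2)\to\cdots$, and the cohomological analogue of Theorem \ref{main-ses} expresses $h^*(X)$ as its colimit, so that no $\derlim$ term appears and the argument below collapses to its colimit version. Fix a natural transformation $\tau\colon h_*\to k_*$ of theories satisfying map excision, $\bigsqcup$-additivity and $\nullseq$-additivity, and assume $\tau$ is an isomorphism on $pt$. I want to show that $\tau_X\colon h_*(X)\to k_*(X)$ is an isomorphism for every coronated polyhedron $X$; the relative statement for closed pairs then follows by the five lemma applied to the long exact sequences of the pair.

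First I would dispose of the locally compact separable coronated polyhedra. These are covered by the uniqueness theorem already available in that class, \cite{M00}*{Theorem \ref{book:uniqueness2}(b)} (Petkova--Sklyarenko), whose hypotheses are exactly the Eilenberg--Steenrod axioms together with our three axioms; feeding in our assumption that $\tau$ is an isomorphism on $pt$, we conclude that $\tau$ is already an isomorphism on every compactum and on every locally finite polyhedron. In particular the standard skeletal induction is subsumed here: $\bigsqcup$-additivity handles finite discrete spaces, map excision identifies the relative groups of a simplexwise filtration of a finite polyhedron with those of sphere pairs, and the Milnor $\lim$--$\derlim$ sequence (valid for both $h_*$ and $k_*$ from the same axioms) passes from finite polyhedra to their inverse limits.

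Next, for an arbitrary coronated polyhedron $X$ I would invoke Theorem \ref{main-poly} to choose a sequential polyhedral resolution $\dots\xr{p_2}R_2\xr{p_1}R_1$. The proof of Theorem \ref{main-ses}, in its refined form \ref{main-ses2}, constructs the natural short exact sequence $0\to\derlim h_{n+1}(R_i)\to h_n(X)\to\lim h_n(R_i)\to 0$ out of the three axioms alone, hence simultaneously for $h_*$ and for $k_*$, and naturality of $\tau$ turns it into a commutative ladder of short exact sequences. Now $\tau$ induces a morphism of inverse sequences $\{h_*(R_i)\}\to\{k_*(R_i)\}$ which is an isomorphism of inverse systems as soon as each $\tau_{R_i}$ is an isomorphism; since $\lim$ and $\derlim$ are functorial on inverse sequences, the two outer vertical maps of the ladder are then isomorphisms, and the five lemma yields that $\tau_X$ is an isomorphism. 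This reduces the entire theorem to the single assertion that $\tau$ is an isomorphism on every polyhedron $R_i$.

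The main obstacle is precisely this last reduction, for the $R_i$ are necessarily non-compact whenever $X$ is non-compact, and they carry the metric rather than the weak topology, so their homology is genuinely \emph{not} a colimit over finite subpolyhedra; this is exactly the source of the uncountable $\derlim$ and of the isomorphism $\tilde H_0\simeq\prod_{i}\Z\big/\bigoplus_i\Z$ in Example \ref{comb1}. To treat such polyhedra I would reprise the computation carried out inside the proof of Theorem \ref{main-ses}: a metric polyhedron is assembled from finite polyhedra by disjoint unions, null-sequences and adjunctions along finite subpolyhedra, and on each of these building operations $\tau$ is controlled by $\bigsqcup$-additivity, $\nullseq$-additivity and map excision respectively, together with homotopy invariance and the five lemma. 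It is here that $\nullseq$-additivity is indispensable: without it one can construct theories that differ on a null-sequence $\nullseq_i P_i$, consistent with the failure of the corresponding statement for homotopy recorded in Example \ref{comb2}. Once $\tau$ is known to be an isomorphism on all finite and infinite polyhedra, the ladder argument of the previous paragraph completes the homology case, and its colimit version completes the cohomology case.
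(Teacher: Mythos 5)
Your middle step is exactly the paper's own argument: the paper's Corollary \ref{uniqueness} applies Theorem \ref{main-ses2} to both theories, uses naturality of $\tau$ together with functoriality of $\lim$ and $\derlim$ to obtain a commutative ladder of short exact sequences, and concludes by the five-lemma (with the cohomology case handled through the isomorphism $h^n(X)\simeq\colim h^n(R_i)$). So your proposal stands or falls with the claim you correctly isolate as the main obstacle: that $\tau$ is an isomorphism on every metric polyhedron, including the non-locally-compact, non-separable ones that occur as terms $R_i$ of a resolution of a general coronated polyhedron.

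It is precisely at this step that your proposal has a genuine gap, in both of the routes you offer. First, the appeal to Petkova--Sklyarenko (\cite{M00}*{Theorem \ref{book:uniqueness2}(b)}) is a category mismatch: that theorem concerns theories defined on closed pairs of \emph{locally compact separable metrizable} spaces, whereas $h_*$ and $k_*$ are defined only on closed pairs of coronated polyhedra, and the two classes are incomparable (the product of the Cantor set and $\R$ is locally compact and separable but not a coronated polyhedron, since no complement of a compactum in it is locally connected; conversely the $R_i$ need be neither locally compact nor separable). So the theorem does not apply to $h_*$ at all, and even if its proof could be rerun inside the intersection of the two classes, its conclusion (compacta and locally finite polyhedra) still would not cover the $R_i$ arising from a non-locally-compact $X$. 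Second, your fallback for general polyhedra --- to ``reprise the computation carried out inside the proof of Theorem \ref{main-ses}'' --- refers to a computation that is not there: the proof of Theorem \ref{main-ses2} constructs the exact sequence and nowhere proves, nor needs, that a transformation which is an isomorphism on $pt$ is an isomorphism on arbitrary metric polyhedra; in particular it contains no decomposition of an arbitrary metric polyhedron into finite polyhedra via disjoint unions, null-sequences and adjunctions along finite subpolyhedra. Nor is such an assembly available for free: a non-locally-finite metric polyhedron is not the topological colimit of its finite subpolyhedra or of its skeleta (this is exactly the phenomenon behind Example \ref{comb1}), so any such ``assembly'' requires a limiting argument, which is precisely the content being assumed. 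What the paper does at this point is import a separate result, Milnor's uniqueness theorem for metric polyhedra (\cite{M00}*{Theorem \ref{book:uniqueness1}(b)}), which needs only $\bigsqcup$-additivity for disjoint unions of compact polyhedra and an isomorphism on $pt$. If you cite (or prove) that theorem, your ladder argument closes the proof; as written, the decisive step is unproven and misattributed. Note also that with this theorem in hand your first paragraph becomes superfluous: compacta are themselves coronated polyhedra and are handled by the same ladder, with no separate appeal to the locally compact theory.
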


%

As a byproduct of the proof of Theorem \ref{main-ses} we also obtain

\begin{maintheorem} \label{invariance} Steenrod--Sitnikov homology is an invariant of strong shape for coronated polyhedra.
\end{maintheorem}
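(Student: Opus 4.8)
The plan is to exploit the natural short exact sequence of Theorem~\ref{main-ses} (in the refined form of Theorem~\ref{main-ses2}), which computes the Steenrod--Sitnikov homology of a coronated polyhedron $X$ from \emph{any} ANR resolution $\mathbf R=(\dots\to R_2\to R_1)$, together with the fact (Theorem~\ref{main-ANR}) that a coronated polyhedron admits a \emph{sequential} ANR resolution. The point is that once homology is read off from a tower, strong shape invariance reduces to two statements: that the tower of homology groups $\{H_n(R_i)\}$ depends, up to pro-isomorphism, only on the strong shape of $X$, and that both $\lim$ and $\derlim$ are invariants of pro-isomorphism \emph{of towers}.

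First I would recall how strong shape morphisms act on resolutions. In the resolution approach to strong shape theory (cf.\ \cite{Mard}, \cite{MS}), a strong shape morphism $X\to Y$ between spaces admitting ANR resolutions $\mathbf R$ and $\mathbf S$ is represented by a coherent morphism $\mathbf R\to\mathbf S$ of the associated inverse systems of ANRs, and a strong shape equivalence corresponds to a coherent homotopy equivalence. Since $X$ and $Y$ are coronated polyhedra, Theorem~\ref{main-ANR} lets us take $\mathbf R$ and $\mathbf S$ to be \emph{towers}, i.e.\ systems indexed by $\N$. Applying the homology functor $H_n$ to a coherent morphism of towers of ANRs yields a genuine morphism of towers of abelian groups $\{H_n(R_i)\}\to\{H_n(S_i)\}$ in the category $\mathrm{pro}\text{-}\mathcal{Ab}$: the coherence data are homotopies and higher homotopies, and since $H_n$ sends homotopic maps to equal homomorphisms, all the higher coherences collapse to strict commutativity, leaving a well-defined pro-morphism that depends only on the coherent homotopy class. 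In particular a strong shape equivalence induces a pro-isomorphism $\{H_n(R_i)\}\cong\{H_n(S_i)\}$ for every $n$.

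Next I would invoke the invariance of $\lim$ and $\derlim$ under pro-isomorphism of towers. Passing to cofinal subsequences changes neither $\lim$ nor $\derlim$, so after reindexing I may assume the pro-isomorphism is represented by a level morphism $f\colon\{A_i\}\to\{B_i\}$ with $A_i=H_n(R_i)$ and $B_i=H_n(S_i)$. Splitting $f$ as $0\to\ker f\to\{A_i\}\to\im f\to 0$ and $0\to\im f\to\{B_i\}\to\coker f\to 0$, the towers $\ker f$ and $\coker f$ are pro-zero because $f$ is a pro-isomorphism; a pro-zero tower is Mittag-Leffler-trivial and hence has vanishing $\lim$ and $\derlim$. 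Feeding these two short exact sequences of towers into the six-term $\lim$--$\derlim$ exact sequence shows that $\lim f$ and $\derlim f$ are isomorphisms. Thus the pro-isomorphism of homology towers yields isomorphisms
\[\lim H_n(R_i)\xrightarrow{\ \cong\ }\lim H_n(S_i),\qquad \derlim H_{n+1}(R_i)\xrightarrow{\ \cong\ }\derlim H_{n+1}(S_i).\]
By the naturality asserted in Theorem~\ref{main-ses2}, these isomorphisms assemble into a morphism of the two short exact sequences for $X$ and $Y$, and the five lemma gives $H_n(X)\cong H_n(Y)$. The same bookkeeping shows that the assignment is functorial, so $H_n$ factors through the strong shape category, which is the assertion.

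I expect the main obstacle to be the first step: making precise, and verifying, the claim that a strong shape morphism between coronated polyhedra is realized by a coherent morphism of the chosen \emph{sequential} ANR resolutions in a way compatible with the natural sequence of Theorem~\ref{main-ses2}. One must check that the pro-morphism induced on homology is independent of the representing level morphism and of the choice of resolution, so that the induced map $H_n(X)\to H_n(Y)$ is genuinely an invariant; this is where the naturality of the short exact sequence does the essential work. A secondary point worth stressing is that the invariance of $\derlim$ used above is special to towers---$\derlim$ is \emph{not} a pro-invariant for inverse systems over general directed sets---so it is precisely the countability of the resolution provided by Theorems~\ref{main-poly} and~\ref{main-ANR} that makes the argument go through.
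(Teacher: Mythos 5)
There is a genuine gap, and it sits exactly where you yourself predicted: the five lemma cannot \emph{produce} the map $H_n(X)\to H_n(Y)$ --- it needs that map as an input. Your argument supplies only the two outer vertical arrows of the would-be morphism of short exact sequences, namely the isomorphisms $\derlim H_{n+1}(R_i)\cong\derlim H_{n+1}(S_i)$ and $\lim H_n(R_i)\cong\lim H_n(S_i)$ coming from the pro-isomorphism of homology towers. But a short exact sequence is not determined, even up to isomorphism, by its outer terms: $0\to\Z\xrightarrow{2}\Z\to\Z/2\to 0$ and $0\to\Z\to\Z\oplus\Z/2\to\Z/2\to 0$ have isomorphic outer terms and non-isomorphic middle terms. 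Nor can the naturality clause of Theorem \ref{main-ses2}(b) be invoked to supply the missing middle arrow, because that clause applies only to an actual continuous map $f\:X\to X'$ (together with compatible level maps); a strong shape morphism is in general not induced by any continuous map --- if it were, the theorem would reduce to homotopy invariance and there would be nothing to prove. So for a genuine strong shape morphism (indeed even for a strong shape equivalence) your scheme never defines the homomorphism $H_n(X)\to H_n(Y)$ whose existence is the entire content of the theorem; the $\lim$/$\derlim$ bookkeeping, which is correct as far as it goes, only constrains what that homomorphism would do on the sub- and quotient groups once it exists.

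The paper fills precisely this hole, and does so using map excision in an essential way. Lemma \ref{two-five} together with the exact sequence of the triple $(R_{[0,\infty]},\,X\cup R_0,\,R_0)$ identifies $H_{n-1}(X)$ with $H_n(R_{[0,\infty)}^\Star,\,\STAR\cup R_0)$, the homology of a pair of honest spaces built from the pinched mapping telescope of a polyhedral resolution (Theorem \ref{compactohedral}). A strong shape morphism $F\:X\to Y$ is represented by a homotopy class of continuous maps $\bar f\:(R_{[0,\infty)}^\Star,\,\STAR\cup R_0)\to (Q_{[0,\infty)}^\Star,\,\STAR\cup Q_0)$, so ordinary functoriality of $H_*$ applied to $\bar f$ gives the map on middle terms for free, and well-definedness and functoriality follow from homotopy invariance. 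In other words, instead of trying to reconstruct $H_n(X)$ from the tower $\{H_n(R_i)\}$ --- which irretrievably loses the extension data --- the paper re-expresses $H_n(X)$ itself as the homology of a space on which strong shape morphisms act by honest maps. If you want to salvage your plan, that identification, not the pro-category bookkeeping, is the step you must prove; once you have it, your observations about towers, pro-isomorphisms and $\derlim$ are no longer needed.
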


E. G. Sklyarenko asked if Steenrod--Sitnikov homology is an invariant of strong shape for paracompact spaces, 
adding that the solution of this problem ``will likely be a test not only for Steenrod--Sitnikov homology, 
but also for strong shape theory itself'' \cite{Sk6}.

\begin{problem} \label{invariance0} Are Steenrod--Sitnikov homotopy groups invariant under strong shape for coronated polyhedra?
\end{problem}

\section{More examples} \label{examples}

\subsection{Complements of local compacta}

\begin{lemma} \label{open-in-closure}
Every locally compact subspace $X$ of a metrizable space $M$ is open in its closure.
\end{lemma}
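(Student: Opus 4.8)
The plan is to prove the stronger pointwise statement that every $x\in X$ is interior to $X$ relative to $\overline X$, the closure of $X$ in $M$; quantifying over all $x$ then yields at once that $X$ is open in $\overline X$. The single ingredient I would import from outside is that $M$, being metrizable, is Hausdorff, so that every compact subset of $M$ is closed in $M$.

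First I would fix $x\in X$ and invoke local compactness to obtain a set $U$, open in $X$, together with a compact set $C\subset X$ satisfying $x\in U\subset C$ (take $C$ to be a compact neighborhood of $x$ in $X$ and $U$ its interior in $X$). Writing $U=V\cap X$ with $V$ open in $M$, the candidate neighborhood of $x$ in $\overline X$ is $V\cap\overline X$, which is automatically open in $\overline X$ and contains $x$. It then remains only to check that this neighborhood is trapped inside $X$.

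The crux is the inclusion $V\cap\overline X\subset C$. To see it, I would take $y\in V\cap\overline X$ and show that $y$ lies in the closure in $M$ of $U=V\cap X$: any neighborhood $W$ of $y$ yields a neighborhood $W\cap V$, which meets $X$ because $y\in\overline X$, and since $W\cap V\cap X=W\cap U$ we get $W\cap U\neq\emptyset$. Hence $y\in\overline U\subset\overline C=C$, the last equality being exactly where I use that the compact set $C$ is closed in $M$. This gives $V\cap\overline X\subset C\subset X$, so $x$ is interior to $X$ in $\overline X$, completing the argument.

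I expect this passage — from a merely relatively compact neighborhood living in the subspace $X$ to a set genuinely closed in the ambient $M$, which can then absorb the extra limit points contributed by $\overline X$ — to be the only step needing real care; everything else is formal point-set manipulation. The metrizability of $M$ is used solely through the Hausdorff property, so the same proof would in fact go through for any Hausdorff ambient space.
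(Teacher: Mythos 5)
Your proof is correct, but it takes a genuinely different route from the paper. The paper argues by contradiction with sequences: it supposes some $x\in X$ is a limit of points $x_k\in\overline{X}\setminus X$, arranges (after discarding finitely many terms) that all $x_k$ lie in a neighborhood $U$ of $x$ in $\overline{X}$ with $\operatorname{Cl}_X(U\cap X)=K$ compact, and then observes that each $x_k$, being a limit of points of $U\cap X\subset K$, must itself lie in $K\subset X$ --- a contradiction. That argument leans on metrizability (first countability) twice, to extract the sequence $x_k$ and to realize each $x_k$ as a limit of points of $X$. You instead give a direct, sequence-free verification that each $x\in X$ is interior to $X$ in $\overline{X}$: the chain $V\cap\overline{X}\subset\overline{U}\subset\overline{C}=C\subset X$, where the key step is that a compact neighborhood $C$ of $x$ in $X$ is closed in the ambient Hausdorff space and so absorbs the limit points coming from $\overline{X}$. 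Your computation $W\cap V\cap X=W\cap U$ establishing $V\cap\overline{X}\subset\overline{U}$ is correct, and the payoff is exactly what you state at the end: your argument only uses that $M$ is Hausdorff, so it proves the stronger statement that the paper merely cites (Engelking 3.3.9) in the remark following the lemma, whereas the paper's own proof is a shorter sequential argument tailored to the metrizable setting in which the lemma is applied.
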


This is true also when $M$ is non-metrizable (see \cite{En}*{3.3.9}).
Clearly, when $M$ is locally compact, the converse holds: if $X$ is open in its closure, 
then it is locally compact.

\begin{proof}
Suppose that some $x\in X$ is the limit of a sequence of points $x_k\in\bar X\but X$.
We may assume without loss of generality that each $x_k$ lies in an open neighborhood $U$ of $x$ in $\bar X$
such that $U\cap X$ has compact closure $K$ in $X$.
Each $x_k$ is the limit of a sequence of points of $X$, hence the limit of a sequence of points of $U\cap X$, 
which are also points of $K$.
But then $x_k\in K$, which is a contradiction.
\end{proof}

\begin{theorem} \label{co-locally-compact} 
(a) Let $Q$ be a compact polyhedron. If $X\subset Q$ is locally compact, then $Q\but X$ is a coronated polyhedron.

(b) Let $Q$ be a compact ANR. If $X\subset Q$ is locally compact, then $Q\but X$ is a coronated ANR.
\end{theorem}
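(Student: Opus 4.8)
The plan is to exhibit an explicit compactum whose deletion leaves an open subset of $Q$, and then to invoke the standard facts that open subsets of polyhedra (resp.\ of ANRs) are again polyhedra (resp.\ ANRs). Write $Y=Q\setminus X$ and let $\bar X$ denote the closure of $X$ in $Q$. The natural candidate for the compactum is the frontier $K\bydef\bar X\setminus X$.

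First I would check that $K$ is a compactum contained in $Y$. By Lemma~\ref{open-in-closure}, $X$ is open in $\bar X$, so $K=\bar X\setminus X$ is closed in $\bar X$; since $\bar X$ is closed in the compact space $Q$, the set $K$ is closed in $Q$ and hence compact. As $K$ is disjoint from $X$, we have $K\subset Y$.

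Next I would identify the complement $Y\setminus K$. A point belongs to $Y\setminus K=(Q\setminus X)\setminus(\bar X\setminus X)$ exactly when it lies neither in $X$ nor in $\bar X\setminus X$, that is, when it lies outside $\bar X$. Hence $Y\setminus K=Q\setminus\bar X$, which is open in $Q$.

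It then remains only to apply the two classical facts. For (a), every open subset of a polyhedron (in the metric topology) is a polyhedron, so $Q\setminus\bar X$ is a polyhedron and $Y$ is a coronated polyhedron. For (b), every open subset of an ANR is an ANR, so $Q\setminus\bar X$ is an ANR and $Y$ is a coronated ANR. I anticipate no genuine obstacle here: beyond Lemma~\ref{open-in-closure}, the argument rests entirely on these two well-known inheritance properties, and the only real insight is recognizing that deleting precisely $\bar X\setminus X$ converts the complement into the open set $Q\setminus\bar X$.
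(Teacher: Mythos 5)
Your proof is correct and is essentially identical to the paper's own argument: the same compactum $K=\bar X\setminus X$ (compact via Lemma~\ref{open-in-closure} and compactness of $Q$), the same identification $(Q\setminus X)\setminus K=Q\setminus\bar X$, and the same appeal to the facts that open subsets of polyhedra (resp.\ ANRs) are polyhedra (resp.\ ANRs).
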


In fact, the proof shows that $Q\but X=K\cup P$, where $K$ is a compactum and $P=X\but K$ is 
a locally compact polyhedron (in (a)) or a locally compact ANR (in (b)).

\begin{proof} 
Since $X$ is locally compact, it is open in its closure $\bar X$ in $Q$.
Hence $K\bydef \bar X\but X$ is closed in $\bar X$ and therefore in $Q$.
Since $S^n$ is compact, so is $K$.
On the other hand, $P\bydef S^n\but\bar X$ is an open subset of $Q$, and hence it is locally compact.
We have $Q\but X=P\cup K$.
If $Q$ is an ANR, then so is $P$ (see \cite{M00}*{Lemma \ref{book:ANR-basic}(c)}), and hence
$Q\but X$ is a coronated ANR.
If $Q$ is a polyhedron, then so is $P$ \cite{AH}*{\S III.3, Satz II} (see also 
\cite{Sp}*{Hint to Exercise 3.A.3}, \cite{M00}*{Theorem \ref{book:open-subset}}), and hence 
$Q\but X$ is a coronated polyhedron.
\end{proof}

\subsection{Complements of coronated polyhedra and coronated ANRs}

The converse to Theorem \ref{co-locally-compact} is not true (for instance, $S^1\but S^0$ is 
a coronated polyhedron, but $S^2\but (S^1\but S^0)$ is not locally compact), but some related 
assertions do hold.

\begin{proposition} \label{cor-lc}
(a) If $Q$ is a compact ANR and $X\subset Q$ is of the form $X=K\cup P$, where 
$K$ is a compactum and $P=X\but K$ is open in $Q$ (hence an ANR), then $Q\but X$ is locally compact.

(b) Let $Q$ be a compact affine polyhedron in some $\R^n$, and suppose that $X$ is a subset of $Q$
of the form $X=K\cup P$, where $K$ is a compactum and $P=X\but K$ is a locally compact affine polyhedron.
Then there exists a coronated polyhedron $Y\subset Q$ such that $Y$ deformation retracts onto $X$
and $Q\but Y$ is locally compact.

(c) If $X$ is an $n$-dimensional coronated polyhedron of the form $X=K\cup P$, where $K$ is a compactum
and $P$ is a locally compact separable polyhedron, then $X$ admits an embedding in $S^{2n+1}$ 
(regarded as an affine polyhedron in $\R^{2n+2}$) such that $P$ is embedded onto an affine polyhedron.
\end{proposition}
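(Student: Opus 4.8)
Part (a) is immediate: since $P$ is open in $Q$, the set $Q\but P$ is closed in the compactum $Q$ and hence compact, while $K$, being compact, is closed, so $Q\but K$ is open; thus $Q\but X=(Q\but P)\cap(Q\but K)$ is an open subset of the compactum $Q\but P$ and is therefore locally compact. The same bookkeeping reduces (b) to a single construction. Indeed, suppose I can produce a set $N$, open in $Q$ and disjoint from $K$, with $P\subseteq N\subseteq Q$, that deformation retracts onto $P$; and put $Y=K\cup N$. Then $Y\but K=N$ is an open subset of the polyhedron $Q$, hence a polyhedron by \cite{M00}*{Theorem \ref{book:open-subset}}, so $Y$ is a coronated polyhedron; the complement $Q\but Y=(Q\but N)\but K$ is an open subset of the compactum $Q\but N$, hence locally compact; and, extending the retraction of $N$ onto $P$ by the identity on $K$, we obtain a retraction $Y\to X$. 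So everything comes down to building such an $N$ together with a retraction that is continuous across $K$.

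To build $N$ I would use that $P$, being a locally compact separable polyhedron, is exhausted by compact subpolyhedra $C_1\subset C_2\subset\cdots$ with $C_i\subset\Int_P C_{i+1}$ and $\bigcup_i C_i=P$. Each $C_i$ is disjoint from the compactum $K$, so $d_i:=\operatorname{dist}(C_i,K)>0$; I would take an open neighborhood $N_i$ of $C_i$ in $Q$ of radius $<d_i/2$ (so that $\overline{N_i}\cap K=\varnothing$), thin enough that it deformation retracts onto $C_i$ by a homotopy moving each point a distance $<\eta_i$, where $\sum_i\eta_i<\infty$, and with the $N_i$ chosen compatibly with the nesting of the $C_i$. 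Splicing these retractions as in an infinite mapping telescope produces a deformation retraction of $N:=\bigcup_i N_i$ onto $P$ that fixes $P$ pointwise, and hence a retraction $r\:Y\to X$ as above. The one delicate point---and the main obstacle in (b)---is continuity of $r$ (and of the retracting homotopy) at the points of $K$. But since $\overline{N_i}\cap K=\varnothing$, any sequence in $N$ converging to a point $\kappa\in K$ must eventually leave each fixed $N_i$ and so lie in $N_{i_m}$ with $i_m\to\infty$; the telescope then displaces it by less than $\sum_{j\ge i_m}\eta_j\to0$, so its image converges to $\kappa=r(\kappa)$. Thus the summability of the $\eta_i$, which I secure by making the $N_i$ thin near $K$, is exactly what forces continuity at $K$, continuity away from $K$ being clear since there $N$ is open and the telescope retraction is continuous. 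This proves (b).

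For (c) the plan is to embed first and then straighten $P$ along an exhaustion. By the classical Menger--N\"obeling theorem the compactum $K$, of dimension at most $\dim X=n$, admits an embedding $g\:K\hookrightarrow\R^{2n+1}\subset S^{2n+1}$. Exhausting $P=\bigcup_i C_i$ by compact subpolyhedra as before, I would place the vertices of a locally finite triangulation of $P$ in general position in $\R^{2n+1}$, one exhaustion level at a time, extending the affine placement of $C_i$ to that of $C_{i+1}$. Because $2n+1\ge 2\dim P+1$, general position makes the resulting affine map of $P$ a PL embedding onto an affine polyhedron, and because $n+\dim P<2n+1$ a generic placement keeps the affine image of $P$ disjoint from the $n$-dimensional compactum $g(K)$, matching the relation $P\cap K=\varnothing$ in $X$. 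Defining $f\:X\to S^{2n+1}$ by $f|_K=g$ and $f|_P$ the affine embedding, it remains to arrange that $f$ is a homeomorphism onto its image.

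The essential constraint---and the main obstacle in (c)---is to carry out the affine placement in a \emph{corona-faithful} way. Concretely, whenever the simplices of $C_{i+1}\but C_i$ approach a point $\kappa\in K$ in $X$, their affine images must be placed inside a shrinking neighborhood of $g(\kappa)$, with the neighborhoods of distinct corona points kept disjoint; I would organize this by choosing, compatibly with a fixed metric on $X$, radii $\eps_i\to0$ and placing the $i$-th level so that ends of $P$ over $\kappa$ land within $\eps_i$ of $g(\kappa)$. This control guarantees both directions: $f(p)\to g(\kappa)$ whenever $p\to\kappa$, so that $f$ is a continuous injection; and, conversely, $f(p)\to g(\kappa)$ only when $p\to\kappa$, which is what upgrades the continuous injection $f$ to an embedding. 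Finally, the low-dimensional cases $n=0$ and $n=1$, where the general-position and disjointness counts degenerate, I would settle directly---for $n=0$ the space $X$ is a $0$-dimensional separable metrizable space, embeddable in $\R\subset S^{2n+1}$ with $P$ a discrete (hence affine) subset, and for $n=1$ a placement of the arcs and the compactum can be arranged by hand in $S^3$.
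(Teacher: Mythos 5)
Your part (a) coincides with the paper's one\-line proof, and your reduction of (b) to producing an open set $N\supset P$ with $N\cap K=\emptyset$ that deformation retracts onto $P$ is exactly the paper's skeleton (the paper takes $Y\bydef K\cup V$ with $V$ open in $Q$ and invokes (a)). The gap in (b) is the construction of the deformation retraction itself. Your splicing cannot work as stated: the deformation retraction of $N_i$ onto the compact piece $C_i$ does not fix $P\cap N_i$ pointwise (it moves the points of $P\cap N_i\but C_i$ into $C_i$), so these homotopies do not assemble into a retraction of $N$ onto $P$, let alone a deformation retraction fixing $P$; moreover the compatibility of the $N_i$ and of their homotopies under the nesting $C_i\subset C_{i+1}$ is asserted, not constructed, and your summable-$\eta_i$ continuity argument presupposes the telescope you never build. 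The paper avoids all of this with one PL step: since $P$ is locally compact, $L\bydef\bar P\but P$ is compact (Lemma \ref{open-in-closure}), so $U\bydef Q\but L$ is an affine polyhedron in which $P$ is closed, hence $P$ is triangulated by a subcomplex of a triangulation of $U$, and the second derived neighborhood of $P$ deformation retracts onto $P$ fixing $P$, with tracks confined to simplexes --- which is what makes the extension by $\id_K$ continuous. If you want to keep a metric/ANR approach, the statement you actually need is that a closed ANR subspace of an ANR is a strong deformation retract of a neighborhood, plus control of track diameters near $L$; neither is delivered by your construction.

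In (c) the gap is more fundamental: you embed $K$ first and then try to attach $P$ ``corona-faithfully,'' and you correctly identify that step as the main obstacle --- but you leave it as a plan, and it is in fact the entire difficulty: knowing which ends of $P$ approach which points of $K$, and with what asymptotics, is precisely the data of an embedding of $X$, which you do not possess after embedding only $K$. The paper dissolves the obstacle by reversing the order: first embed all of $X$ topologically, $g\:X\to S^{2n+1}$ (possible by the classical embedding theorem, since $X$ is $n$-dimensional, separable and metrizable); then only straighten $P$, replacing $g|_P$ by a proper PL embedding $h\:P\to U\bydef S^{2n+1}\but L$, where $L\bydef\overline{g(P)}\but g(P)$ is compact by Lemma \ref{open-in-closure}, with $h$ being $\eps$-close to $g|_P$ for the function $\eps(x)=d\big(g(x),L\big)$. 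Because the closeness decays to zero as one approaches $L$, the map $g|_K\cup h$ is automatically continuous and an embedding: the corona-faithfulness is inherited from $g$ rather than built by hand. General position in the $(2n+1)$-manifold $U$ both makes $h$ a proper PL embedding and keeps $h(P)$ off the $n$-dimensional compactum $g(K)$; your separate worries about $n=0,1$ also disappear in this approach.
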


\begin{proof}[Proof. (a)] $Q\but X=(Q\but K)\cap (Q\but P)$ is an open subset of the compactum $Q\but P$.
\end{proof}

\begin{proof}[(b)] Since $P$ is locally compact, it is open in its closure $\bar P$ in $Q$.
Hence $L\bydef \bar P\but P$ is compact and therefore $U\bydef Q\but L$ is an affine polyhedron.
Since $P$ is closed in $U$, it is triangulated by a subcomplex $P_0$ of some simplicial complex $U_0$ 
triangulating $U$ \cite{M00}*{Theorem \ref{book:subcomplex0}}.
Let $N$ be the second derived neighborhood of $P_0$ in $U_0$ and let $V$ be the interior of $|N|$ in $U$.
Then $V$ deformation retracts onto $P$, and extending by $\id_K$ we get a deformation retraction 
of $Y\bydef K\cup V$ onto $X$.
On the other hand, $V$ is open in $Q$, and so $Q\but Y$ is locally compact by (a).
\end{proof}

\begin{proof}[(c)] Let $g\:X\to S^{2n+1}$ be some embedding.
Since $g(P)$ is locally compact, it is open in its closure $\overline{g(P)}$ in $S^{2n+1}$, and hence 
$L\bydef \overline{g(P)}\but g(P)$ is compact.
Then $U\bydef S^{2n+1}\but L$ is an affine polyhedron, and by general position the proper embedding 
$g|_P\:P\to U$ is $\eps$-close to a proper PL embedding $h\:P\to U$, where $\eps\:P\to (0,\infty)$ 
is given by $\eps(x)=d\big(g(x),\,L)$.
Clearly, $g|_K\cup h$ is an embedding of $X$ in $S^{2n+1}$.
\end{proof}

\subsection{Pinched mapping telescope}

\begin{example} \label{telescope-quotient}
Let $\dots\xr{p_1}P_1\xr{p_0}P_0$ be an inverse sequence of ANRs and continuous maps which is
a {\it semi-resolution}, that is, for each $n$ and every neighborhood $U$ of $p^\infty_n(P_\infty)$ in 
$P_n$ there exists an $m\ge n$ such that $p^m_n(P_n)\subset U$.
See \cite{M00} for a discussion of this definition.

The extended mapping telescope $P_{[0,\infty]}$ is the inverse limit of the finite metric telescopes
$P_{[0,n]}=MC(p_0)\cup_{P_1} MC(p_1)\cup_{P_2}\dots\cup_{P_{n-1}} MC(p_{n-1})$ (see details in
\cite{M00}*{\S\ref{book:extended}}).
We have $P_{[0,\infty]}=P_{[0,\infty)}\cup P_\infty$, where $P_\infty=\lim P_i$ and the infinite mapping 
telescope $P_{[0,\infty)}$ is an ANR (see \cite{M00}*{Proposition \ref{book:extended-ANR}}).
Since the inverse sequence is a semi-resolution, the metric quotient $P_{[0,\infty]}/P_\infty$ is 
well-defined and in fact is homeomorphic to the inverse limit of the metric quotients
$P_{[0,n+1]}/P_{n+1}=P_{[0,n]}\cup_{P_n}CP_n$ (see \cite{M00}*{\S\ref{book:pinched}}).

We will write $P_{[0,\infty)}^\Star\bydef P_{[0,\infty]}/P_\infty$ and $\STAR\bydef \{P_\infty\}$, where $P_\infty$
is regarded as a point of $P_{[0,\infty)}^\Star$.
More generally for any $J\subset [0,\infty)$ we denote by $P_J^\Star$ the subset $P_J\cup\STAR$ of 
$P_{[0,\infty)}^\Star$.

By the above $P_{[0,\infty)}^\Star$ is a coronated ANR.
Clearly, its subspace $P_{\N^+}^\Star$ is nothing but the null-sequence $\nullseq_{i\in\N} P_i$.

Let us also note that if $P_\infty$ is not compact, then $P_{[0,\infty)}^\Star$ is not locally compact.
Indeed, if $P_{[0,\infty)}^\Star$ is locally compact, then there exists a $k$ such that the $P_i$ are 
compact for all $i>k$, and hence $P_\infty$ is compact.
\end{example}

\begin{example} \label{telescope-quotient2}
Let $\dots\xr{p_1}P_1\xr{p_0}P_0$ be a semi-resolution consisting of polyhedra and polyhedral maps.
Then in the notation of Example \ref{telescope-quotient}, $P_{[0,\infty)}$ is a polyhedron 
\cite{M00}*{Theorem \ref{book:extended telescope}(a)}.
So $P_{[0,\infty)}^\Star$ is a coronated polyhedron.
\end{example}

\section{Coronated ANRs} \label{ANR-section}

\subsection{Detecting a coronated ANR}

\begin{lemma} \label{emb-coronated-anr} 
Let $M$ be a metrizable space and $\dots\subset X_2\subset X_1\subset M$.
Let $X_\infty=\bigcap_k X_k$.

If each $X_k$ is an ANR and each neighborhood of $X_\infty$ contains some $X_k$, then $X_\infty$ contains 
a compact subset $Z$ such that $X_\infty\but Z$ is an ANR.
\end{lemma}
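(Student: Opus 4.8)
The plan is to identify the required compactum as the locus in $X_\infty$ where the inclusions $X_{k+1}\subseteq X_k$ fail to stabilize, to see that its complement is an ANR by a locality argument, and to extract its compactness from the hypothesis. Write $Y_k\bydef X_k\but X_\infty$, so that $Y_1\supseteq Y_2\supseteq\cdots$ have empty intersection, and call $x\in X_\infty$ \emph{good} if some neighborhood $U$ of $x$ in $M$ and some index $k$ satisfy $U\cap X_k\subseteq X_\infty$. For a good $x$ we have $U\cap X_k=U\cap X_\infty$ (since $X_\infty\subseteq X_k$), an open subset of the ANR $X_k$ and hence itself an ANR; as every point of $U\cap X_\infty$ is then good, the set $G$ of good points is open in $X_\infty$ and locally an ANR, so $G$ is an ANR by the locality of the ANR property for metrizable spaces \cite{M00}*{Lemma \ref{book:ANR-basic}}. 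Directly from the definitions, $Z\bydef X_\infty\but G=X_\infty\cap\bigcap_k\overline{Y_k}$, closures taken in $M$. Since $X_\infty\but Z=G$ is already an ANR, it remains only to prove that $Z$ is compact.

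This compactness is the main obstacle, and it is exactly here that the hypothesis --- every neighborhood of $X_\infty$ contains some $X_k$ --- does the work. I would use it through a single contrapositive device: if $S\subseteq M$ is a \emph{closed} set disjoint from $X_\infty$ that nevertheless meets every $X_m$, then $V\bydef M\but S$ is an open neighborhood of $X_\infty$ containing no $X_m$, which is forbidden. All the compactness statements below are obtained by manufacturing such an $S$.

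First I would show $\bigcap_k\overline{Y_k}\subseteq X_\infty$, which makes $Z=\bigcap_k\overline{Y_k}$ closed in $M$ rather than merely in $X_\infty$. If some $x\in\bigcap_k\overline{Y_k}$ lay outside $X_\infty$, then, using $x\in\overline{Y_m}$, I would choose $c_m\in Y_m$ with $d(c_m,x)<1/m$, so $c_m\to x$. The set $S\bydef\{c_m\}\cup\{x\}$ is compact, it meets every $X_m$ (as $c_m\in Y_m\subseteq X_m$), and it is disjoint from $X_\infty$ because $x\notin X_\infty$ and each $c_m\notin X_\infty$; this is the forbidden $S$.

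Finally I would prove $Z$ sequentially compact. Given a sequence $(a_m)$ in $Z$, suppose it had no subsequence converging in $M$. Using $a_m\in\overline{Y_m}$, pick $b_m\in Y_m$ with $d(b_m,a_m)<1/m$; then $(b_m)$ also has no convergent subsequence, so its image is closed and discrete, and it is disjoint from $X_\infty$ since $b_m\in Y_m$ --- once more the forbidden $S$. Hence every sequence in $Z$ has a subsequence converging in $M$, and by the closedness established above its limit lies in $Z$, so $Z$ is compact. The two routine points I would check at the outset are that the ANR property is local for metrizable spaces and that in a metric space a sequence with no convergent subsequence has closed discrete image; granting these, the decisive and least routine step is the neighborhood construction of the preceding paragraphs.
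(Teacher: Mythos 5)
Your proof is correct and takes essentially the same route as the paper's: your set $G$ of good points coincides with the paper's $N=\bigcup_k N_k$ (the union of the interiors of $X_\infty$ in the $X_k$), the ANR-ness of $G$ follows by the same locality argument, and compactness of $Z$ is extracted from the neighborhood hypothesis by the same device of producing a closed set disjoint from $X_\infty$ that meets every $X_k$. The only cosmetic difference is that you invoke this device twice (once to get $Z$ closed in $M$, once for sequential compactness), whereas the paper's single cluster-point argument handles both at once.
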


\begin{proof}
Let $N_k$ be the interior of $X_\infty$ in $X_k$.
Let us note that each $N_k\subset N_{k+1}$ due to $X_{k+1}\subset X_k$.
Since $N_k$ is open in $X_k$, it is an ANR (see \cite{M00}*{\ref{book:ANR-basic}(c)}).
Also $N_k$ is open in $X_\infty$ and moreover in $N\bydef \bigcup_k N_k$.
Hence $N$ as a space has open neighborhoods of points that are ANRs and hence is an ANR itself
(see \cite{M00}*{Theorem \ref{book:ANR-union4}}).
Since each $N_k$ is open in $X_\infty$, so is $N$.
Then $Z\bydef X_\infty\but N$ is closed in $X_\infty$.
By definition, $Z$ is the set of all $x\in X_\infty$ such that each neighborhood of $x$ in each $X_k$ 
is not contained in $X_\infty$.

Suppose that $Z$ is not compact.
Let $z_1,z_2,\dots$ be a sequence of points of $Z$ that has no cluster points in $Z$.
Then it also has no cluster points in $X_\infty$ (since $Z$ is closed in $X_\infty$).
Since each $z_n\in Z$, the open $\frac1n$-ball about $z_n$ contains 
a point $x_n\in X_n\but X_\infty$. 
If $x$ is a cluster point of the sequence $x_1,x_2,\dots$, then it is also a cluster point of $z_1,z_2,\dots$,
and therefore lies in $M\but X_\infty$.
Thus the closure $Y$ of $\{x_1,x_2,\dots\}$ is contained in $M\but X_\infty$.
Then $M\but Y$ is an open neighborhood of $X_\infty$, so by our hypothesis it contains some $X_k$.
Then $x_k\notin X_k$, which contradicts our choice of $x_k$.
Thus $Z$ is compact.
\end{proof}

\begin{lemma} \label{B2-application}
Let $X_\infty$ be a metrizable space and $\dots\xr{p^2_1}X_1\xr{p^1_0}X_0$ be 
its sequential resolution; in fact, it is enough to assume (B2).
Let $Y=\{y_1,y_2,\dots\}$ be a countable subset of $X_\infty$ that has no cluster points in $X_\infty$.

Then there exists a $k$ such that $Z\bydef p^\infty_k(\{y_k,y_{k+1},\dots,\})$ has no cluster points in $X_k$
and there exists a neighborhood $M$ of $Z$ in $p^\infty_k(X_\infty)$ such that $p^\infty_k$ 
embeds $(p^\infty_k)^{-1}(M)$.
\end{lemma}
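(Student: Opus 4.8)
The plan is to establish the two assertions in turn, extracting the discreteness of $Z$ first and only afterwards producing the embedding, since the index $k$ and the neighborhood $M$ found along the way must ultimately be a single finite index and a single open set. First I would record that, as $Y$ has no cluster points in the metrizable space $X_\infty$, it is closed and discrete there, so by the Tietze extension theorem there is a continuous $f\: X_\infty\to[0,\infty)$ with $f(y_n)=n$. Applying the resolution property (B2) to $f$ and to a sufficiently fine open cover of $[0,\infty)$, I obtain an index $k$ and a map $g\: X_k\to[0,\infty)$ with $g\circ p^\infty_k$ uniformly within $\tfrac12$ of $f$. Then $g(p^\infty_k(y_n))\ge n-\tfrac12\to\infty$, so any cluster point $z$ of $Z\bydef p^\infty_k(\{y_k,y_{k+1},\dots\})$ in $X_k$ would make the finite value $g(z)$ a limit of these, which is absurd; thus $Z$ has no cluster points in $X_k$, and the same estimate shows that the points $z_n\bydef p^\infty_k(y_n)$ are eventually distinct.

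For the embedding I would fix pairwise disjoint open sets $O_n\ni z_n$ in $X_k$, with pairwise disjoint closures and with $\{O_n\}$ locally finite—available since $Z$ is closed and discrete in the metrizable $X_k$—so that $(p^\infty_k)^{-1}\big(\bigcup_n O_n\big)=\bigsqcup_n A_n$ splits into the disjoint open pieces $A_n\bydef(p^\infty_k)^{-1}(O_n)\ni y_n$. It then suffices to find, for each $n$, an open set $M_n$ with $z_n\in M_n\subset O_n\cap p^\infty_k(X_\infty)$ on whose preimage $p^\infty_k$ is injective and open, and to put $M\bydef\bigcup_n M_n$. The topological form of (B2)—that the sets $(p^\infty_i)^{-1}(W)$, for neighborhoods $W\ni p^\infty_i(y_n)$, constitute a neighborhood basis at $y_n$ (whence the system separates points of $X_\infty$)—lets me shrink $A_n$ to a neighborhood of $y_n$ whose diameter in $X_\infty$ is controlled, which secures continuity of the candidate inverse. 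Genuine injectivity, however, is not yet in hand, since the neighborhood-basis separates two given points only at some index depending on the pair.

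The hard part will be passing to the \emph{single} index $k$ demanded by the statement and obtaining injectivity over a set where $X_\infty$ need not be locally compact (as for the comb space at its limit points), so that the compactness argument that would force a finite stage to separate is unavailable. I expect to overcome this exactly as in the first step, replacing the pointwise use of (B2) by one factorization: embed $X_\infty$ as a closed subset of a normed linear space $L$ (an absolute retract) via an isometric $e$, and apply (B2) to $e$ together with an open cover of $L$ of \emph{variable} mesh—finer than $r_n$ near $e(U_n)$, where $U_n\bydef B(y_n,r_n)$ are the disjoint balls above—to get one $k$ and one $g\: X_k\to L$ with $\|g\circ p^\infty_k-e\|<r_n/3$ throughout each $U_n$. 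On $U_n$ this forces $p^\infty_k$ to be as injective and as open as $e$, uniformly along the escaping tail, and then $M_n\bydef\{w\in p^\infty_k(X_\infty):\|g(w)-e(y_n)\|<r_n/2\}$ assembles into $M$. The single delicate point, where (B2) is genuinely indispensable and the non-local-compactness bites, is to rule out stray sheets—points of $X_\infty$ outside every $U_m$ whose image still lands in some $M_n$—which I would exclude by enlarging $k$ and shrinking the $M_n$, invoking the neighborhood-basis property to separate any such point from $y_n$ at the finite stage $k$.
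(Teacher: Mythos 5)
Your first step (deriving that $Z$ has no cluster points) is salvageable, but the core of your argument collapses at exactly the point you yourself flagged as hard. The claim that an approximation $\|g\circ p^\infty_k-e\|<r_n/3$ on $U_n$ ``forces $p^\infty_k$ to be as injective and as open as $e$'' is false: if $x,x'\in U_n$ satisfy $p^\infty_k(x)=p^\infty_k(x')$, then $g(p^\infty_k(x))=g(p^\infty_k(x'))$, and the triangle inequality gives only $d(x,x')=\|e(x)-e(x')\|<2r_n/3$. Closeness to an embedding bounds the diameter of the fibers of $p^\infty_k$; it can never make those fibers singletons, no matter how fine the mesh, because pairs of points at distance below the local mesh remain free to be identified. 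So your paragraph 3 does not resolve the ``index depending on the pair'' obstruction acknowledged in paragraph 2 --- it reproduces it. The same defect undermines your treatment of ``stray sheets'': excluding each stray point of $X_\infty$ whose image lands in $M_n$ requires its own finite stage, and there may be infinitely many such points, so ``enlarging $k$'' is again an unbounded family of conditions with no single index supplied. In addition, you repeatedly apply (B2) to \emph{maps} ($f\:X_\infty\to[0,\infty)$, $e\:X_\infty\to L$): (B2) is a statement about open covers of $X_\infty$ being refined by $(p^\infty_n)^{-1}$ of covers of some $X_n$, and yields no approximate factorization of maps through the $X_k$; such factorization is a resolution property requiring more than (B2) (and not obviously available here, since the $X_i$ are not assumed to be ANRs), whereas the lemma explicitly asserts that (B2) alone suffices.

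What is missing is a mechanism that converts failure of injectivity (or of continuity of the inverse) at \emph{unboundedly late} stages into a single violation of (B2). The paper's proof supplies exactly this: after using (B2) once, with the cover $\{X_\infty\setminus(Y\setminus\{y_i\})\mid i\in\N\}$, to make the images $y'_i$ discrete at a finite stage $n$ with disjoint closed balls $B'_i$ of radii tending to $0$, it works with the \emph{full} preimages $B_i=(p^\infty_n)^{-1}(B'_i)$ (which eliminates stray sheets from the outset) and sets $m_i=\sup\{m\mid p^\infty_m\text{ does not embed }B_i\}$. Assuming some subsequence $m_{i_j}\to\infty$, it extracts from each failure a witness pair: a closed set $Q_j\subset B_{i_j}$ and a point $r_j\in B_{i_j}\setminus Q_j$ such that the closure of $p^\infty_m(Q_j)$ contains $p^\infty_m(r_j)$ for all $m\le m'_j$; the shrinking disjoint balls guarantee that $Q\bydef\bigcup_j Q_j$ and $R\bydef\{r_1,r_2,\dots\}$ are disjoint closed subsets of $X_\infty$, and a \emph{second} application of (B2), to the two-element cover $\{X_\infty\setminus Q,\,X_\infty\setminus R\}$, produces the contradiction. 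Nothing in your proposal plays the role of this second application of (B2), and without it (or some substitute of equal strength) the lemma's conclusion --- a single $k$ embedding a neighborhood of the preimage of the whole infinite discrete set --- is not reached.
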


\begin{proof} Clearly, each $Y_i\bydef Y\but\{y_i\}$ is closed in $X_\infty$.
Hence $C\bydef \{X_\infty\but Y_i\mid i\in\N\}$ is an open cover of $X_\infty$.
Then by (B2) there exists an $n$ and an open cover $D$ of $X_n$ such that $(p^\infty_n)^{-1}(D)$ refines $C$.
Let $Y'=p^\infty_n(Y)$ and $y'_i=p^\infty_n(y_i)$.
If $i\ne j$, no element of $C$ contains both $y_i$ and $y_j$, and hence no element of $D$ contains both 
$y'_i$ and $y'_j$.
In particular, $y'_i\ne y'_j$ whenever $i\ne j$, and $Y'$ has no cluster points in $X_n$.
Clearly, each $Y'_i\bydef Y'\but\{y'_i\}$ is closed in $X_n$.
Let $\rho_i=\min(\frac1i,\frac{d_i}3)$, where $d_i=d(y'_i,Y'_i)$.
Let $B'_i$ be the closed $\rho_i$-ball about $y'_i$ in $p^\infty_n(X_\infty)$.
Then $B'_i\cap B'_j=\emptyset$ whenever $i\ne j$, and if $z_j\in B'_{i_j}$, where $i_1<i_1<\dots$, then
$d(z_j,y'_{i_j})\to 0$ and consequently the sequence $z_j$ has no cluster points in $X_n$.

Let $B_i=(p^\infty_n)^{-1}(B_i')$.
Thus $B_i$ is a closed neighborhood of $y_i$ and $p^\infty_n(B_i)=B_i'$.
Let $m_i=\sup\,\{m\mid p^\infty_m\text{ does not embed }B_i\}\in\N\cup\{\infty\}$.
Suppose that some subsequence $m_{i_j}\to\infty$, where $i_1<i_2<\dots$ and $\N\cup\{\infty\}$ has the topology 
of the one-point compactification of $\N$.
Let us choose some $m'_j\in\N$ such that $m'_j\le m_{i_j}$ and $m'_j\to\infty$.
Then $f_j\bydef p^\infty_{m'_j}|_{B_{i_j}}$ is not an embedding for each $j$.
If $f_j$ is not injective, then we have distinct points $a,b\in B_{i_j}$ such that $f_j(a)=f_j(b)$.
In this case let us set $Q_j=\{a\}$ and $r_j=b$. 
If $f_j$ is injective, but the inverse map $f_j^{-1}\:f_j(B_{i_j})\to B_{i_j}$ is not continuous, then 
there exists a sequence of points $a_k\in B_{i_j}$ and a point $b\in B_{i_j}$ such that $a_k\not\to b$ but
$f_j(a_k)\to f_j(b)$.
By passing to a subsequence we may assume that $b$ is not a cluster point of the sequence $a_k$.
In this case let $Q_j$ be the closure of $\{a_1,a_2,\dots\}$ in $X_\infty$ and let $r_j=b$.
Thus in both cases $r_j\in B_{i_j}$ and $Q_j$ is a closed subset of $B_{i_j}$ which does not contain $r_j$,
but the closure of $f_j(Q_j)$ contains $f_j(r_j)$.
The latter implies that the closure of $p^\infty_m(Q_j)$ contains $p^\infty_m(r_j)$ for all $m\le m'_j$.

Let us show that $Q\bydef \bigcup_j Q_j$ is closed in $X_\infty$.
Given a sequence of points $q_k\in Q$ converging to a point $q\in X_\infty$, their images 
$q'_k\bydef p^\infty_n(q_k)$ in $X_n$ converge to $q'\bydef p^\infty_n(q)$.
If the $q_k$ all lie in a finite union of $Q_j$'s, which is certainly closed, then $q\in Q$.
Otherwise there exists a subsequence $q_{k_l}\in Q_{j_l}$, where $j_1<j_2<\dots$.
Then each $q'_{k_l}\in B'_{i_{j_l}}$, where $i_{j_1}<i_{j_2}<\dots$ due to $j_1<j_2<\dots$.
Then by the above the sequence $z_l\bydef q'_{k_l}$ has no limit, which is a contradiction. 
Thus $Q$ is closed in $X_\infty$, and similarly $R\bydef \{r_1,r_2,\dots\}$ is closed in $X_\infty$.
Then $\tilde C\bydef \{X_\infty\but Q,\,X_\infty\but R\}$ is an open cover of $X_\infty$.
Hence by (B2) there exists an $m$ and an open cover $\tilde D$ of $X_m$ such that $(p^\infty_m)^{-1}(\tilde D)$
refines $\tilde C$.
Since $m'_j\to\infty$, there exists a $j$ such that $m'_j\ge m$.
Then the closure of $p^\infty_m(Q_j)$ contains $p^\infty_m(r_j)$.
If $U$ is an element of $\tilde D$ containing $p^\infty_m(r_j)$, then $(p^\infty_m)^{-1}(U)$ contains both $r_j$ 
and some point of $Q_j$, and hence does not lie in any element of $\tilde C$.
This is a contradiction.
Thus there exist an $m$ and an $l$ such that $m_i<m$ for all $i\ge l$.  

We have proved that $p^\infty_m$ embeds $B_i$ for all $i\ge l$.
Let $k=\max(l,m,n)$.
Then $p^\infty_k$ embeds $B\bydef \bigcup_{i\ge k} B_i$.
We have $B=(p^\infty_n)^{-1}(B')$, where $B'=\bigcup_{i\ge k} B'_i$.
Each $B'_i$ is a neighborhood of $y'_i$ in $p^\infty_n(X_\infty)$, so $B'$ is a neighborhood 
of $Z'\bydef \{y'_k,y'_{k+1},\dots\}$ in $p^\infty_n(X_\infty)$.
If $\pi^k_n\:p^\infty_k(X_\infty)\to p^\infty_n(X_\infty)$ denotes the restriction of $p^k_n$, 
then $M\bydef (\pi^k_n)^{-1}(B')$ is a neighborhood of $Z\bydef p^\infty_k(\{y_k,y_{k+1},\dots\})$ in $p^\infty_k(X_\infty)$ 
and $B=(p^\infty_k)^{-1}(M)$.
Finally, $Z$ has no cluster points in $X_k$ since $Z'$ has no cluster points in $X_n$.
\end{proof}

\begin{theorem} \label{coronated-anr} Let $X_\infty$ be a metrizable space and $\dots\xr{p^2_1}X_1\xr{p^1_0}X_0$ be 
its sequential resolution, where each $X_i$ is an ANR and moreover each $p^i_j(X_i)$ is an ANR.
Then $X_\infty$ contains a compact subset $Z$ such that $X_\infty\but Z$ is an ANR.
\end{theorem}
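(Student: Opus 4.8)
The plan is to deduce Theorem~\ref{coronated-anr} from the nested-subspace case already settled in Lemma~\ref{emb-coronated-anr}, using Lemma~\ref{B2-application} to tame the failure of the projection $p^\infty_k\:X_\infty\to X_k$ to be a global embedding. Lemma~\ref{emb-coronated-anr} requires the approximating ANRs to be honestly nested subsets of one ambient space, whereas a resolution only supplies the maps $p^\infty_k$; the two preceding lemmas are designed to bridge exactly this gap, Lemma~\ref{B2-application} producing \emph{saturated} embedded neighborhoods and Lemma~\ref{emb-coronated-anr} supplying the coronated-ANR structure of the target.

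First I would fix an index $k$ and consider the images $A_m\bydef p^m_k(X_m)\i X_k$ for $m\ge k$. Since $p^{m+1}_m(X_{m+1})\i X_m$, these form a decreasing sequence $X_k=A_k\supset A_{k+1}\supset\cdots$ of ANRs inside the ambient ANR $X_k$, each $A_m$ being an ANR by hypothesis. The defining property of a resolution (B1) asserts that every neighborhood of $p^\infty_k(X_\infty)$ in $X_k$ contains some $A_m$, and for a resolution $p^\infty_k(X_\infty)=\bigcap_m A_m$; this is precisely the hypothesis of Lemma~\ref{emb-coronated-anr} applied with $M=X_k$. It therefore produces a compact set $Z_k\i p^\infty_k(X_\infty)$ such that $p^\infty_k(X_\infty)\but Z_k$ is an ANR.

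Next I would locate the compact obstruction inside $X_\infty$ itself. For each $k$ let $B_k$ be the set of points of $X_\infty$ at which $p^\infty_k$ embeds no neighborhood. Because $p^\infty_k=p^{k+1}_k\circ p^\infty_{k+1}$, a neighborhood embedded by $p^\infty_k$ is also embedded by $p^\infty_{k+1}$, so $B_1\supset B_2\supset\cdots$, and each $B_k$ is closed (its complement is a union of embedded open sets). Put $Z\bydef\bigcap_k B_k$. To see that $Z$ is compact, suppose not: being closed, $Z$ then contains a countable subset $Y=\{y_1,y_2,\dots\}$ with no cluster points in $X_\infty$. Lemma~\ref{B2-application} yields an index $k$ for which $p^\infty_k$ embeds the saturated neighborhood $(p^\infty_k)^{-1}(M)$ of a tail of $Y$; in particular it embeds a neighborhood of $y_i$ for all large $i$, contradicting $y_i\in Z\i B_k$. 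Hence $Z$ is compact.

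It remains to show that $X_\infty\but Z$ is an ANR, for which it suffices, by \cite{M00}*{Theorem~\ref{book:ANR-union4}} (as in the proof of Lemma~\ref{emb-coronated-anr}), to produce an ANR neighborhood of each of its points. Given $x\notin Z$, choose $k$ with $x\notin B_k$ and, applying Lemma~\ref{B2-application} to a discrete set accumulating only at infinity, upgrade this to a saturated embedded neighborhood $U=(p^\infty_k)^{-1}(M)$ of $x$, so that $p^\infty_k$ carries $U$ homeomorphically onto the relatively open subset $M\i p^\infty_k(X_\infty)$; after absorbing the compact preimage of $Z_k$ into $Z$, the image $M$ lies in $p^\infty_k(X_\infty)\but Z_k$, which is an ANR by the first step, whence $U\cong M$ is an ANR. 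The hard part will be the bookkeeping here: the stage $k$ furnished for $x$ depends on $x$, while $Z_k$ lives in $X_k$, so one must verify that a single late stage simultaneously provides saturated embeddings on all of $X_\infty$ outside a compact set and that the attendant compact preimages can be absorbed into $Z$ uniformly. This is exactly the content Lemma~\ref{B2-application} is engineered to deliver—its use of the one-point compactification of $\N$ organizes the countably many local stages into one—so once the saturation and relative openness of the embedded images are secured, the ANR conclusion follows formally from the coronated-ANR structure of $p^\infty_k(X_\infty)$ established above.
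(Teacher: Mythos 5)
Your first two steps are essentially correct, and the first is a nice observation: since $X_k$ is $T_1$, condition (B1) forces $p^\infty_k(X_\infty)=\bigcap_{m\ge k}p^m_k(X_m)$ (for $y$ outside the image, $X_k\but\{y\}$ is a neighborhood of $p^\infty_k(X_\infty)$ and must contain some $p^m_k(X_m)$), so Lemma~\ref{emb-coronated-anr} applies inside $M=X_k$ and yields the compacta $Z_k$; and your appeal to Lemma~\ref{B2-application} does prove that $Z\bydef\bigcap_k B_k$ is compact. The fatal gap is in the third step: $X_\infty\but Z$ need not be an ANR, i.e.\ you have chosen the wrong corona. The paper's own Example~\ref{comb1} refutes it. There $X$ is the comb-and-flea space and the resolution $\dots\to R_2\to R_1$, $R_i=X\cup[0,\frac1i]\x\{0\}$, consists of ANRs with \emph{inclusion} bonding maps, so each image $p^i_j(R_i)=R_i$ is an ANR and the hypotheses of Theorem~\ref{coronated-anr} hold; but every $p^\infty_k\:X\to R_k$ is itself an embedding, so every point has embedded (indeed saturated, with relatively open image in $p^\infty_k(X)$) neighborhoods, whence all your $B_k$ are empty and $Z=\emptyset$ --- yet $X$ is not an ANR, being non-locally-connected at the flea. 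This shows that embeddedness by $p^\infty_k$ is intrinsically too weak a local condition. What detects local ANR-ness is the condition the paper builds into $N_k$: a saturated neighborhood must be carried homeomorphically onto a set that is \emph{open in some $p^i_k(X_i)$} (in Example~\ref{comb1} this fails exactly at the flea). The price of that stronger condition is that compactness of the resulting larger corona no longer follows from Lemma~\ref{B2-application} alone; the paper must additionally manufacture points $w_i\in p^i_k(X_i)\but p^\infty_k(X_\infty)$ near the $y'_i$ and play the closed set they form against (B1).

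Consequently, what you deferred as ``bookkeeping'' is the actual mathematical content, and it does not go through as sketched. ``Absorbing the compact preimage of $Z_k$ into $Z$'' fails on three counts: $(p^\infty_k)^{-1}(Z_k)$ is closed but in general not compact (the projections of a resolution are not proper); the stage $k$ depends on the point being treated, so infinitely many such preimages would have to be absorbed, threatening both compactness and closedness of the enlarged $Z$; and enlarging $Z$ changes which points require ANR neighborhoods, a circularity your sketch never resolves --- in Example~\ref{comb1} it is precisely this absorption that would have to force the flea into $Z$, which your construction as written never does. A further, more local, misstep: Lemma~\ref{B2-application} cannot be applied to a single point $x\notin B_k$, since its hypothesis and proof concern an infinite discrete set with distinct points and its conclusion covers only a tail $\{y_k,y_{k+1},\dots\}$, from which the particular point you care about may have been dropped. (A saturated embedded neighborhood of $x$ can instead be extracted directly from (B2) applied to the cover $\{\Int U,\,X_\infty\but\{x\}\}$, where $U$ is an embedded neighborhood; but the counterexample shows that even this repaired statement cannot salvage the argument, because saturation and embeddedness without openness in the images $p^i_k(X_i)$ is not enough.)
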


\begin{proof}
Let $N_k$ be the set of all $x\in X_\infty$ such that there exists an $i\ge k$ and an open neighborhood $V$ of 
$p^\infty_k(x)$ in $p^i_k(X_i)$ such that the restriction of $p^\infty_k$ to $U\bydef (p^\infty_k)^{-1}(V)$ is 
a homeomorphism between $U$ and $V$.
Since $p^\infty_k\:X_\infty\to p^i_k(X_i)$ is continuous, $U$ is open in $X_\infty$, and it follows that $N_k$ 
is open in $X_\infty$.
Clearly $p^\infty_k$ restricts to a homeomorphism between $N_k$ and $N'_k\bydef p^\infty_k(N_k)$, and clearly
$(p^\infty_k)^{-1}(N'_k)=N_k$.
Also for each $x\in N'_k$ there exists an $i\ge k$ such that some open neighborhood $V$ of $x$ in $N'_k$ 
is open in $p^i_k(X_i)$.
Since $V$ is open in $p^i_k(X_i)$, it is an ANR (see \cite{M00}*{Lemma \ref{book:ANR-basic}(c)}).
Then $N'_k$ is an ANR (see \cite{M00}*{Theorem \ref{book:ANR-union4}}), and hence so is $N_k$.
Since $N_k$ is open in $X_\infty$, it is also open in $N\bydef \bigcup_k N_k$.
Hence $N$ is an ANR (see \cite{M00}*{Lemma \ref{book:ANR-union3}(b)}).
Since each $N_k$ is open in $X_\infty$, so is $N$.
Then $Z\bydef X_\infty\but N$ is closed in $X_\infty$.

Suppose that $Z$ is not compact.
Let $y_1,y_2,\dots$ be a sequence of points of $Z$ that has no cluster points in $Z$.
Then it also has no cluster points in $X_\infty$ (since $Z$ is closed in $X_\infty$).
By Lemma \ref{B2-application} there exists a $k$ such that, writing $y'_i=p^\infty_k(y_i)$, the set 
$Y'\bydef \{y'_k,y'_{k+1},\dots\}$ has no cluster points in $X_k$ and there exists a closed neighborhood $M$ of $Y'$ 
in $p^\infty_k(X_\infty)$ such that $p^\infty_k$ embeds $(p^\infty_k)^{-1}(M)$.
Since $y_i\notin N_k$, every neighborhood of $y'_i$ in $M$ is not open in $p^i_k(X_i)$ for each $i\ge k$.
Hence for each $i\ge k$ the $\frac1i$-neighborhood of $y'_i$ in $p^i_k(X_i)$ contains a point 
$w_i\notin p^\infty_k(X_\infty)$.
If $w$ is a cluster point of $W\bydef \{w_k,w_{k+1},\dots\}$, then it is also a cluster point of $\{y'_k,y'_{k+1},\dots\}$,
but the latter has no cluster points by Lemma \ref{B2-application}.
Thus $W$ is closed in $X_k$.
Hence $X_k\but W$ is an open neighborhood of $p^\infty_k(X_\infty)$, so by (B1) it contains $p^i_k(X_i)$ 
for some $i\ge k$.
Then $w_i\in p^i_k(X_i)\subset X_k\but W$, which contradicts the definition of $W$.
Thus $Z$ is compact.
\end{proof}

\subsection{Constructing a sequential resolution}

\begin{theorem} \label{coronated-anr2} 
Let $X_\infty$ be a metrizable space containing a compact subset $Z$ such that $X_\infty\but Z$ is an ANR.
Then $X_\infty$ has a resolution of the form $\dots\subset X_2\subset X_1$, where each $X_i$ is an ANR.
\end{theorem}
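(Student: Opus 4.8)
The plan is to realize $X_\infty$ as the intersection of a decreasing sequence of ANRs in which the ``thickening'' happens only near the compact set $Z$. First I would embed $X_\infty$ as a closed subspace of a convex subset $C$ of a normed linear space (a standard embedding; see \cite{M00}); since $C$ is an AR, every open subset of $C$ is an ANR (see \cite{M00}*{Lemma \ref{book:ANR-basic}(c)}). As $Z$ is compact it is closed in $X_\infty$, so $P\bydef X_\infty\but Z$ is open in $X_\infty$, hence locally closed in $C$; concretely $P=X_\infty\cap G$ for some open $G\i C$ with $G\cap Z=\emptyset$, and $P$ is closed in $G$. Fixing $\delta_1>\delta_2>\dots\to 0$, I would set $W_k=\{c\in C\mid d(c,Z)<\delta_k\}$ and $X_k\bydef P\cup W_k$. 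Each $W_k$ is open in $C$, the $W_k$ decrease, and $\bigcap_k W_k=Z$ because $Z$ is closed; consequently $X_\infty\i\dots\i X_2\i X_1$ with $\bigcap_k X_k=X_\infty$.

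The crux is to show that each $X_k$ is an ANR. I would cover $X_k$ by the two sets $W_k$ and $G\cap X_k$, both open in $X_k$; their union is all of $X_k$ since $P\i G$. The first, $W_k$, is an open subset of $C$ and hence an ANR. The second is $G\cap X_k=P\cup(G\cap W_k)$, a subspace of the ANR $G$ which is the union of the closed ANR $P$ and the open ANR $G\cap W_k$, with intersection $P\cap W_k$ open in $P$ and therefore an ANR; by the union theorem \cite{M00}*{Lemma \ref{book:ANR-union3}} this union is an ANR. Thus every point of $X_k$ has an open ANR neighborhood, so $X_k$ is an ANR (see \cite{M00}*{Theorem \ref{book:ANR-union4}}). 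I expect the gluing along the interface $\{d(\cdot,Z)=\delta_k\}\cap P$ to be the main obstacle: since $P$ is neither open nor closed in $X_k$ it cannot be absorbed into a purely open cover, and this is precisely the step where the ANR structure of $P$ itself (rather than merely that of its neighborhoods) is essential, so the proof must rely on a union theorem for a closed ANR together with an open ANR.

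It remains to check that the inclusions $\dots\i X_2\i X_1$ form a resolution. The bonding maps are embeddings by construction, so we obtain a resolution of the required form (as in Theorem \ref{main-ANR}(3)) once cofinality is verified. For that, let $U$ be a neighborhood of $X_\infty$ in $X_1$; we may assume $U$ is open in $X_1$, say $U=V\cap X_1$ with $V$ open in $C$, and then $V\supseteq X_\infty$. Since $Z$ is compact and $V$ is an open neighborhood of $Z$, some $W_k\i V$, and as $P\i X_\infty\i V$ we get $X_k=P\cup W_k\i V\cap X_1=U$. Thus every neighborhood of $X_\infty$ in $X_1$ contains some $X_k$, and by \cite{M00}*{Lemma \ref{book:semi-resolution}(a)} the inclusions form a resolution (this supplies the remaining resolution axiom). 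Combined with Theorem \ref{coronated-anr}, this yields the equivalence of (1) and (3) in Theorem \ref{main-ANR}.
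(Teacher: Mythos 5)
Your framework (the closed embedding of $X_\infty$ into a convex set $C$, the identification of $P\bydef X_\infty\but Z$ as a closed subset of an open set $G\i C$ disjoint from $Z$, and the final verification that the nested sequence is a resolution) is sound, and you have correctly located the crux. But the crux step fails: there is no union theorem for a \emph{closed} ANR together with an \emph{open} ANR, and the statement you invoke is false. The union lemmas used in this paper concern either pieces that are all open in the union, or (Borsuk's theorem) pieces that are all closed with ANR intersection; your configuration is neither, and it is precisely the one where ANR-ness genuinely breaks down. Counterexample: in $\R^2$ let $f(x)=x^2\sin^2(\pi/x)$, let $A=[0,1]\x\{0\}$, let $U=\{(x,f(x))\mid x\in(0,1],\ f(x)>0\}$, and let $Y=A\cup U$. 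Then $A$ is closed in $Y$ and is an ANR (an arc); $U$ is open in $Y$ and is an ANR (a topological sum of open arcs); and $A\cap U=\emptyset$ is open in $A$. Yet $Y$ is not an ANR: the loop $\gamma_n$ formed by the $n$-th bubble $\{(x,f(x))\mid \tfrac1{n+1}\le x\le\tfrac1n\}$ together with the segment of $A$ beneath it lies in the disk of radius $2/n$ about the origin, but has winding number $\pm1$ about a point of $\R^2\but Y$ enclosed between the bubble and the segment, hence is essential in $Y$; so $Y$ is not semi-locally simply connected at the origin. (If the empty intersection bothers you, adjoin $[2,3]\x\{0\}$ to $A$ and $(2.5,3.5)\x\{0\}$ to $U$; the defect persists.) This is exactly what can happen to your $X_k=P\cup W_k$: if $P$ makes infinitely many excursions out of $\overline{W_k}$ that accumulate at a point $p_0$ of the frontier of $W_k$, then each excursion, closed up by a chord through $W_k$, gives arbitrarily small loops near $p_0$ that are essential in $X_k$, so $X_k$ is not an ANR even though $P$ is as nice as an arc. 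Nothing in your argument excludes this, and it cannot be dismissed by choosing the $\delta_k$ cleverly: one can arrange the excursion pattern to be bad at \emph{every} small $\delta$ (e.g.\ $Z$ a closed disk and $P$ an arc spiralling onto it whose radial deviation is a continuous function all of whose level sets in $(0,\eps)$ are infinite and crossed in both directions), and in any case your proof offers no selection argument.

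For comparison, the paper's proof of Theorem \ref{coronated-anr2} also thickens $X_\infty$ only near $Z$, but intrinsically rather than by metric neighborhoods in an ambient space: it embeds only $Z$ into an AR $M$, extends $\id_Z$ to a map $f\:X_\infty\to M$, and assembles $X_i$ from two pieces --- the metric mapping cylinder of $f$ restricted to a shrinking neighborhood $U_i$ of $Z$, reduced relative to $Z$, and the set $U_i'\x(0,1]\cup X_\infty'\x\{1\}\i X_\infty'\x(0,1]$ --- glued along $U_i'\x(0,1]$. The whole point of the cylinder coordinate is that \emph{both} pieces are ANRs which are \emph{open} in $X_i$ (the copy of $X_\infty\but Z$ sits at the top of a collar, not bare inside an ambient space), so only the union theorem for open ANR pieces is needed, together with mapping-cylinder and homotopy-density theorems to certify the pieces. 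That collar is the idea missing from your construction: without it the piece containing $P$ cannot be made open in $X_k$, and you land in the closed-plus-open situation that the counterexample above rules out.
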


\begin{proof} Let us embed $Z$ is some absolute retract $M$ (see \cite{M00}*{Corollary \ref{book:w-embedding}}).
Then the inclusion $Z\to M$ extends to a continuous map $f\:X_\infty\to M$.
Let $M_i$ be the open $\frac1i$-neighborhood of $Z$ in $M$.
Let $U_i$ be the open $\frac1i$-neighborhood of $Z$ in $f^{-1}(M_i)$.
Since $U_i$ is open $f^{-1}(M_i)$, it is also open in $X_\infty$.
Hence $U_i'\bydef U_i\but Z$ is open in $X_\infty'\bydef X_\infty\but Z$, so $U'_i$ is an ANR (see 
\cite{M00}*{Lemma \ref{book:ANR-basic}(c)}).

Let $A_1$ be the metric mapping cylinder of $f|_{U_i}\:U_i\to M_i$ relative to $Z$; that is, $A_1$ is 
the adjunction space $MC(f|_{U_i})\cup_\phi M_i$, where $\phi\:MC(f|_Z)\to M_i$ is the natural map onto 
the image.
Since $Z$ is compact, $f|_Z$ is perfect, and hence by \cite{M00}*{Corollary \ref{book:MC-ANR}} $A_1$ is an ANR.
Let us note that $A_1\but M_i=U_i'\x (0,1]$.
Let $A_2$ be the subset $U_i'\x (0,1]\cup X_\infty'\x\{1\}$ of the product $X_\infty'\x (0,1]$.
A homeomorphic copy of $A_2$ is homotopy dense in the metric mapping cylinder $MC(j)$ of 
the inclusion map $j\:U_i'\to X_\infty'$, whence $A_2$ is also an ANR 
(see \cite{M00}*{Corollary \ref{book:MC-ANR} and Theorem \ref{book:homotopy dense}}).
Let $X_i$ be the amalgamated union $A_1\cup_{U_i'\x(0,1]} A_2$ (with the quotient topology).
Let us show that $X_i$ is metrizable.
Let $V_1$ and $V_2$ be disjoint open neighborhoods of $X_\infty\but U_i$ and $Z$ in $X_\infty$.
Let us consider $F_1\bydef A_1\but\big((V_1\cap U_i)\x(\frac23,1]\big)$ and 
$F_2\bydef (X_\infty\but V_2)\x\{1\}\cup (U_i\but V_2)\x [\frac13,1]$.
Then $F_1$ and $F_2$ are closed in $X_i$ and $F_1\cup F_2=X_i$.
Each $F_k$, being a subset of $A_k$, is metrizable.
Hence $X_i$ is metrizable (see \cite{M00}*{Corollary \ref{book:amalgam-metrization2}(a)}).
Since $A_1$ and $A_2$ are ANRs which are open in $X_i$, we obtain that $X_i$ is an ANR 
(see \cite{M00}*{Corollary \ref{book:ANR-union2}(a)}).

We have $\dots\subset X_1\subset X_0$ and $X_\infty=\bigcap_i X_i$. 
If $U$ is an open neighborhood of $X_\infty$ in $X_0$, then $d(X_0\but U,\,Z)>0$ and hence $U$ contains 
some $X_i$. 
Thus $\dots\subset X_1\subset X_0$ is a resolution of $X_\infty$ 
(see \cite{M00}*{Lemma \ref{book:semi-resolution}}).
\end{proof}

\begin{corollary}\label{ANR-main} Let $X_\infty$ be a metrizable space.
The following are equivalent:
\begin{enumerate}
\item $X_\infty$ has a resolution of the form $\dots\subset X_2\subset X_1$, where each $X_i$ is an ANR;
\item $X_\infty$ has a resolution of the form $\dots\xr{p^3_2} X_2\xr{p^2_1}X_1$, where each $p^i_j(X_j)$ 
is an ANR;
\item $X_\infty$ contains a compact subset $Z$ such that $X_\infty\but Z$ is an ANR.
\end{enumerate}
\end{corollary}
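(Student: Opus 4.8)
The plan is to prove the equivalence as a cycle $(1)\Rightarrow(2)\Rightarrow(3)\Rightarrow(1)$, in which the two substantive implications are precisely the two theorems just established in this section and the third is immediate. This reduces the corollary to an assembly of results already available, so the body of the argument is short.

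First I would record $(1)\Rightarrow(2)$, which is trivial: in a resolution $\dots\subset X_2\subset X_1$ by ANRs the bonding map from $X_i$ to $X_j$ (for $i>j$) is the inclusion, so its image $p^i_j(X_i)$ equals $X_i$ and is therefore an ANR. Thus the very same resolution already witnesses $(2)$, and no further construction is needed. The implication $(3)\Rightarrow(1)$ is Theorem \ref{coronated-anr2} verbatim: a compact $Z\subset X_\infty$ with $X_\infty\but Z$ an ANR is exactly its hypothesis, and its conclusion is a resolution $\dots\subset X_2\subset X_1$ by ANRs, which is $(1)$.

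The one implication carrying content is $(2)\Rightarrow(3)$, for which I would invoke Theorem \ref{coronated-anr}. The point requiring care — and essentially the only place where the argument is not purely formal — is the matching of hypotheses: Theorem \ref{coronated-anr} is stated assuming both that each $X_i$ is an ANR and that each image $p^i_j(X_i)$ is an ANR, whereas $(2)$ asserts only the latter. I expect this to be the main (and essentially the only) obstacle, and I would dispose of it by observing that the proof of Theorem \ref{coronated-anr} never uses the total spaces $X_i$ as such: the open sets $N_k$ and $N'_k$ are assembled entirely from open subsets of the images $p^i_k(X_i)$, and the compactness argument for $Z$ rests only on these images together with the resolution properties (B1) and (B2). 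Hence the weaker hypothesis of $(2)$ already suffices to run that proof and extract the compact set $Z$, which closes the cycle and completes the proof.
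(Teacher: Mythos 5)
Your proposal is correct and is essentially the paper's own proof: the same cycle $(1)\Rightarrow(2)\Rightarrow(3)\Rightarrow(1)$, with $(1)\Rightarrow(2)$ dismissed as trivial, $(2)\Rightarrow(3)$ by Theorem~\ref{coronated-anr}, and $(3)\Rightarrow(1)$ by Theorem~\ref{coronated-anr2} (the paper additionally remarks that Lemma~\ref{emb-coronated-anr} gives a direct proof of $(1)\Rightarrow(3)$, but this is not needed for the equivalence). The hypothesis mismatch you isolate in $(2)\Rightarrow(3)$ is genuine --- the paper cites Theorem~\ref{coronated-anr} without comment --- and your way of closing it is sound, up to one small imprecision: in the paper's proof the set $N_k$ is defined by quantifying over $i\ge k$, so the case $i=k$ does invoke $X_k$ itself (as $p^k_k(X_k)$); one should restrict the definition to $i>k$, after which your claim that the argument runs entirely on the images $p^i_k(X_i)$ and on (B1)--(B2) is accurate, since the concluding use of (B1) may be taken with $i>k$ as well (alternatively, one can pass to the inverse sequence of images $Y_j\bydef p^{j+1}_j(X_{j+1})$, which is again a resolution of $X_\infty$ and satisfies the full hypothesis of Theorem~\ref{coronated-anr} verbatim).
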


\begin{proof} Clearly, (1)$\Rightarrow$(2). 
By Theorem \ref{coronated-anr} (2)$\Rightarrow$(3).
By Theorem \ref{coronated-anr2} (3)$\Rightarrow$(1).
Also, a direct proof of (1)$\Rightarrow$(3) is given by Lemma \ref{emb-coronated-anr}.
\end{proof}

\section{Coronated polyhedra} \label{poly-section}

\subsection{Detecting a coronated polyhedron}

\begin{theorem} \label{coronated-poly} Let $X_\infty$ be a metrizable space and $\dots\xr{p^2_1}X_1\xr{p^1_0}X_0$ 
be its sequential resolution, where each $X_i$ is an affine polyhedron and each $p^{i+1}_i$ is a polyhedral map.
Then $X_\infty$ contains a compact subset $Z$ such that $X_\infty\but Z$ is a polyhedron.
\end{theorem}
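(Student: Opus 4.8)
The plan is to run the proof of Theorem \ref{coronated-anr} essentially verbatim, upgrading each ``ANR'' conclusion to a ``polyhedron'' conclusion. First I would observe that the present hypotheses are stronger than those of Theorem \ref{coronated-anr}: every affine polyhedron is an ANR, and the image of a polyhedral map between affine polyhedra is again a polyhedron, hence an ANR, so each $p^i_k(X_i)$ is an ANR. Consequently the proof of Theorem \ref{coronated-anr} applies verbatim and produces the open set $N\bydef\bigcup_k N_k\i X_\infty$, where $N_k$ is the set of $x$ at which $p^\infty_k$ restricts to a homeomorphism of an open neighborhood $U=(p^\infty_k)^{-1}(V)$ onto an open subset $V$ of some $p^i_k(X_i)$, together with the conclusion that $Z\bydef X_\infty\but N$ is compact. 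Thus it remains only to prove that $N=X_\infty\but Z$ is a polyhedron rather than merely an ANR.

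To do this I would exploit the polyhedral local models. For each $x\in N$ there are a $k$, an $i\ge k$, and an open neighborhood $V$ of $p^\infty_k(x)$ in $p^i_k(X_i)$ such that $p^\infty_k$ carries $(p^\infty_k)^{-1}(V)$ homeomorphically onto $V$. Since $p^i_k(X_i)$ is a polyhedron and open subsets of polyhedra are polyhedra (\cite{M00}*{Theorem \ref{book:open-subset}}), the set $V$ is a polyhedron, and therefore so is the open neighborhood $(p^\infty_k)^{-1}(V)$ of $x$ in $N$. Hence $N$ is a metrizable space covered by open subspaces each of which is a polyhedron.

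The last step is to assemble these open subpolyhedra into a single triangulation of $N$, i.e.\ to invoke the polyhedral analogue of the ANR union theorem used in Theorem \ref{coronated-anr}: a metrizable space that is the union of a family of open subpolyhedra is a polyhedron (see \cite{M00}). Applying it to the cover $\{(p^\infty_k)^{-1}(V)\}$ of $N$ --- equivalently, to the increasing sequence $N_1\i N_2\i\cdots$ of open subpolyhedra, the inclusion $N_k\i N_{k+1}$ holding because $p^\infty_k=p^{k+1}_k\circ p^\infty_{k+1}$ forces $p^\infty_{k+1}$ to be a local embedding wherever $p^\infty_k$ is --- yields that $N$ is a polyhedron, completing the proof with $Z$ as the required compactum.

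I expect the main obstacle to be precisely this last gluing step. Unlike the ANR property, which is local and is preserved under open unions with essentially no compatibility conditions (\cite{M00}*{Theorem \ref{book:ANR-union4}, Lemma \ref{book:ANR-union3}}), a global triangulation of $N$ must be built so that the triangulations of overlapping local models agree after subdivision; checking that the metric topology on $N$ coincides with that of the resulting complex, and that countably many open subpolyhedra can be coherently retriangulated, is the technical heart of the argument. A secondary point needing care is the claim that the image $p^i_k(X_i)$ of a possibly non-proper, non-locally-finite polyhedral map is itself a polyhedron, which underlies the identification of the local models above.
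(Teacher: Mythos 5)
Your first half coincides with the paper's proof: the paper likewise begins by applying Theorem \ref{coronated-anr} (whose hypotheses hold because the image of a polyhedral map of affine polyhedra is an affine polyhedron, hence an ANR) to obtain the compact set $Z$, and then must upgrade $N\bydef X_\infty\but Z$ from an ANR to a polyhedron using the same local models: open sets $V\i p^i_k(X_i)$ over which $p^\infty_k$ is a homeomorphism. But the step you defer to the end is a genuine gap, not a technicality, and the principle you invoke to bridge it is false: a metrizable space covered by open subsets each homeomorphic to a polyhedron need \emph{not} be a polyhedron. Being a polyhedron, unlike being an ANR, is not a local property --- any non-triangulable topological manifold (e.g.\ the $E_8$-manifold) is covered by open copies of $\R^4$, yet is homeomorphic to no polyhedron (in the metric topology a compact polyhedron is a finite complex). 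What is true, and what the paper actually uses (\cite{M00}*{Lemma \ref{book:atlas5}}), is that an increasing union of open sets, each \emph{equipped with} a polyhedral structure, is a polyhedron \emph{provided the transition maps between these structures are polyhedral}. Producing that compatibility is the entire content of the proof, and your proposal never produces it.

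The paper gets the compatibility from exactly the two hypotheses your sketch leaves idle. Within a fixed level $k$: the local models are open subsets of the sets $p^i_k(X_i)$, which are all \emph{affine} subpolyhedra of the one ambient affine polyhedron $X_k$; since affine structures on subsets of a common ambient space agree on overlaps automatically, $N'_k\bydef p^\infty_k(N_k)$ is an affine polyhedron by \cite{M00}*{Theorem \ref{book:affine manifold}} --- this is why the theorem is stated for affine polyhedra, and it is the only stage at which a ``union theorem'' is legitimately available. Across levels: the structure on $N_k$ is transferred from $N'_k\i X_k$ and that on $N_{k+1}$ from $N'_{k+1}\i X_{k+1}$, and the transition map between them is the restriction of the bonding map $p^{k+1}_k$ to a homeomorphism $N''_k\to N'_k$ of affine polyhedra; because $p^{k+1}_k$ is \emph{polyhedral}, this restriction is automatically a polyhedral homeomorphism by \cite{M00}*{Lemma \ref{book:homeo-restriction}}. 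Only with both compatibilities in hand does \cite{M00}*{Lemma \ref{book:atlas5}} apply to $N=\bigcup_k N_k$. (Your secondary worry, that $p^i_k(X_i)$ is a polyhedron, is legitimate but is shared with the paper and settled in \cite{M00}; it is not where the difficulty lies.) So the ``technical heart'' you flag cannot be discharged by checking that overlapping triangulations can be made to agree in general --- they cannot --- but only by feeding the affineness of the $X_i$ and the polyhedrality of the $p^{i+1}_i$ into the argument at these two specific points.
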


\begin{proof} By Theorem \ref{coronated-anr} $X_\infty$ contains a compact subset $Z$ such that 
$N\bydef X_\infty\but Z$ is an ANR.
We will show that $N$ is a polyhedron.
Let $N_k$ and $N'_k$ be as in the proof of Theorem \ref{coronated-anr}.
Then for each $x\in N'_k$ there exists an $i\ge k$ such that some open neighborhood $V$ of $x$ in $N'_k$ 
is open in $p^i_k(X_i)$.
Since $p^i_k(X_i)$ is an affine polyhedron, so is $V$ \cite{M00}*{Corollary \ref{book:opensubset2}}.
Then $N'_k$ is an affine polyhedron \cite{M00}*{Theorem \ref{book:affine manifold}}.
It is easy to see that each $N_k\subset N_{k+1}$.
Since $N_k$ is open in $X_\infty$, it is open in $N_{k+1}$.
Then $N''_k\bydef (p^{k+1}_k)^{-1}(N_k)$ is an open subset of $N_{k+1}$.
Hence $N''_k$ is an affine polyhedron \cite{M00}*{Theorem \ref{book:affine manifold}}.
Since $p^{k+1}_k$ is polyhedral, and its restriction $p^{k+1}_k|_{N''_k}\:N''_k\to N'_k$ is a homeomorphism,
this restriction is a polyhedral homeomorphism \cite{M00}*{Lemma \ref{book:homeo-restriction}}.
Thus each $N_k$ is endowed with a structure of a polyhedron, and the transition maps of these structures
are polyhedral.
Since $N_k$ is open in $X_\infty$, it is also open in $N\bydef \bigcup_k N_k$.
Hence $N$ is a polyhedron \cite{M00}*{Lemma \ref{book:atlas5}}.
\end{proof}

\subsection{Compactohedral inverse sequences}

Let $R=(\dots\xr{p_2}R_2\xr{p_1}R_1)$ be an inverse sequence.
We call it {\it compactohedral} if the following conditions hold:
\begin{enumerate}
\item[(C0)] each $R_i$ is a polyhedron and each $p_i$ is a polyhedral map;
\item[(C1)] each $R_i$ contains a compact subpolyhedron $K_i$, and each $p_i(K_{i+1})\subset K_i$;
\item[(C2)] each $K_{i+1}\subset\Int p_i^{-1}(K_i)$;
\item[(C3)] each $p_i$ restricts to a homeomorphism between $p_i^{-1}(R_i\but K_i)$ and $R_i\but K_i$.
\end{enumerate}
We call $R$ {\it weakly compactohedral} if it satisfies (C0), (C1) and (C3).
We call $R$ {\it weakly pre-compactohedral} if (C0), (C1) and the following conditions hold:
\begin{enumerate}
\item[(C2$'$)] each $R_i$ contains a closed supolyhedron $L_i$, and each $p_i(L_{i+1})\subset K_i\subset L_i$;
\item[(C3$'$)] each $p_i$ restricts to a homeomorphism between $p_i^{-1}(R_i\but L_i)$ and $R_i\but L_i$.
\end{enumerate}
We call $R$ {\it pre-compactohedral} if (C0), (C1) and the following conditions hold:
\begin{enumerate}
\item[(C2$''$)] each $R_i$ contains a closed supolyhedron $L_i$, and each $p_i(L_{i+1})\subset K_i\subset\Int L_i$;
\item[(C3$''$)] each $p_i$ restricts to a homeomorphism between $p_i^{-1}(\overline{R_i\but L_i})$
and $\overline{R_i\but L_i}$.
\end{enumerate}
We will call the $K_i$ and the $L_i$ the {\it associated compact and closed subpolyhedra} of the (weakly) 
[pre-]compactohedral inverse sequence.

If $R$ is (weakly) compactohedral, then it is (weakly) pre-compactohedral by setting $L_{i+1}=p_i^{-1}(K_i)$.

\begin{lemma} \label{pre-compactohedral}
If $R$ is (weakly) pre-compactohedral, then each $p_i$ factors into a composition of polyhedral maps 
$R_{i+1}\to R_i'\to R_i$ so that the inverse sequence $\dots\to R'_2\to R'_1$ is (weakly) compactohedral.
\end{lemma}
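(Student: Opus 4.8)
The plan is to factor each $p_i$ through the space obtained from $R_{i+1}$ by crushing the point-inverses that lie over the \emph{compact} core $K_i$. Since $p_i$ is polyhedral and $K_i$ is a compact subpolyhedron of $R_i$, the preimage $p_i^{-1}(K_i)$ is a closed subpolyhedron of $R_{i+1}$, and I set
\[R'_i\bydef K_i\cup_{p_i}R_{i+1},\]
the adjunction space formed along the polyhedral map $p_i\colon p_i^{-1}(K_i)\to K_i$; equivalently $R'_i$ is $R_{i+1}$ with each fiber $p_i^{-1}(t)$, $t\in K_i$, collapsed to a point. Let $q_i\colon R_{i+1}\to R'_i$ be the quotient map and let $r_i\colon R'_i\to R_i$ be the map that equals $p_i$ on $R_{i+1}$ and the inclusion on $K_i$, so that $r_i\circ q_i=p_i$ is a factorization of the required shape. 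The compact subpolyhedron associated to $R'_i$ is $K'_i\bydef q_i(p_i^{-1}(K_i))$, the embedded copy of $K_i$, and the bonding maps of the new sequence are $p'_i\bydef q_i\circ r_{i+1}\colon R'_{i+1}\to R'_i$.

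Granting for the moment that each $R'_i$ is a metrizable polyhedron, conditions (C0) and (C1) are immediate: $q_i$ and $r_{i+1}$ are polyhedral, hence so is $p'_i$, and $p'_i(K'_{i+1})\subset K'_i$ because $r_{i+1}(K'_{i+1})=p_{i+1}(p_{i+1}^{-1}(K_{i+1}))\subset K_{i+1}$ and $q_i(K_{i+1})\subset K'_i$ by (C1) for $R$, i.e.\ $p_i(K_{i+1})\subset K_i$. Since $q_i$ collapses only over $p_i^{-1}(K_i)$ and $q_i^{-1}(K'_i)=p_i^{-1}(K_i)$, the map $q_i$ restricts to a homeomorphism of $R_{i+1}\setminus p_i^{-1}(K_i)$ onto $R'_i\setminus K'_i$. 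Moreover $R_{i+1}\setminus p_i^{-1}(K_i)\subset R_{i+1}\setminus L_{i+1}$ by (C2$'$), so (C3$'$) at level $i+1$ shows both that $\im p_{i+1}$ contains this set and that $r_{i+1}$ is a homeomorphism over it; composing, $p'_i$ carries $(p'_i)^{-1}(R'_i\setminus K'_i)$ homeomorphically onto $R'_i\setminus K'_i$.

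The heart of the argument is that the only failures of injectivity of $p'_i$ occur over $K'_i$. Writing $p'_i([x])=[p_{i+1}(x)]$ and supposing $[x]\neq[x']$ but $p'_i([x])=p'_i([x'])$, either $p_{i+1}(x)=p_{i+1}(x')$ lies in $L_{i+1}\setminus K_{i+1}$ (the coincidence is not collapsed at level $i+1$, and $p_{i+1}$ is injective off $L_{i+1}$), or $p_{i+1}(x)\neq p_{i+1}(x')$ both lie in $p_i^{-1}(K_i)$ and are identified by $q_i$. In the first case $L_{i+1}\subset p_i^{-1}(K_i)$ by (C2$'$), so $p_{i+1}(x)\in p_i^{-1}(K_i)$ as well; in either case $p'_i([x])=[p_{i+1}(x)]\in K'_i$. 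Hence $p'_i$ is injective over $R'_i\setminus K'_i$, which together with the previous paragraph is exactly (C3) for the new sequence. This is precisely the step that uses $p_i(L_{i+1})\subset K_i$: it forces the multi-sheetedness of $p_{i+1}$ over the possibly non-compact set $L_{i+1}\setminus K_{i+1}$ to be swallowed by the compact core $K'_i$ after collapsing.

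The hard part will be to show that $R'_i$ is a metrizable polyhedron, since $p_i^{-1}(K_i)$ and its fibers over $K_i$ need not be compact and crushing a non-compact closed subpolyhedron can destroy metrizability. The remedy is that $K_i$ is compact, which makes the attaching data behave like the perfect maps used in the proof of Theorem \ref{coronated-anr2}; I would realize $R'_i$ as a metric mapping cylinder of $p_i\colon p_i^{-1}(K_i)\to K_i$ relative to $K_i$, triangulate $p_i$ simplicially so that the collapse is polyhedral, and invoke the metrization and polyhedrality of such adjunctions (cf.\ \cite{M00}*{Corollary \ref{book:MC-ANR}, Corollary \ref{book:amalgam-metrization2}} and the telescope constructions of Examples \ref{telescope-quotient}--\ref{telescope-quotient2}). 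Finally, in the non-weak case one must also verify (C2), namely $K'_{i+1}\subset\Int(p'_i)^{-1}(K'_i)$; here the collar hypothesis $K_{i+1}\subset\Int L_{i+1}$ of (C2$''$) together with $L_{i+1}\subset p_i^{-1}(K_i)$ gives $p_{i+1}^{-1}(K_{i+1})\subset p_{i+1}^{-1}(\Int L_{i+1})\subset\Int(p_i p_{i+1})^{-1}(K_i)$, and passing to the quotients yields the required open buffer around the collapsed core.
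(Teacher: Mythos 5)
Your verification of the axioms for the new sequence is essentially sound at the set level — taking $K'_i$ to be the embedded copy of $K_i$, getting (C1) from $p_i(K_{i+1})\subset K_i$, getting (C3) by showing that all non-injectivity of $p'_i$ is absorbed over $K'_i$ via (C2$'$) and (C3$'$), and getting (C2) from the buffer $K_{i+1}\subset\Int L_{i+1}$ of (C2$''$); this is exactly the bookkeeping the paper leaves implicit. The genuine gap is in the construction of $R'_i$ itself, and it sits precisely at the point you defer as ``the hard part''. You define $R'_i$ as the adjunction space $K_i\cup_{p_i}R_{i+1}$ with the quotient topology, i.e.\ $R_{i+1}$ with each point-inverse $p_i^{-1}(t)$, $t\in K_i$, crushed to a point. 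In this paper polyhedra carry the metric topology and are in particular metrizable, but the topological quotient of a metrizable space by a decomposition with non-compact closed elements is in general not even first countable at the crushed points. Non-compact point-inverses are not a pathology here but the typical case: in the pre-compactohedral sequences produced by Theorem \ref{compactohedral} and Corollary \ref{cech-cofinal'} (e.g.\ for the comb space of Example \ref{comb1}), the bonding maps cone off non-compact closed subpolyhedra $L_{i+1}$ onto compacta, so a single fiber $p_i^{-1}(t)$ can consist of infinitely many pieces, each accumulated on by its complement; a diagonal argument then shows the quotient topology at $[p_i^{-1}(t)]$ has no countable neighborhood basis. So your $R'_i$, as defined, is not a polyhedron (not even metrizable), $q_i$ and $r_i$ are not polyhedral maps, and (C0), (C1) and the factorization claim fail at the outset.

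Your proposed remedy does not close this gap because its premise is false: compactness of the \emph{target} $K_i$ does not make $p_i\:p_i^{-1}(K_i)\to K_i$ perfect — perfectness is a condition on point-inverses, and it is exactly their non-compactness that causes the trouble. (This is why the machinery of Theorem \ref{coronated-anr2} applies there: the map being cylindered is $f|_Z$ with $Z$ compact.) Moreover, a (relative, metric) mapping cylinder does not collapse anything, so replacing the adjunction by a cylinder would destroy the surjectivity-onto-$R'_i\but K'_i$ needed in (C3). The paper's proof avoids quotient topologies altogether: it triangulates so that $p_i\:P_{i+1}\to P_i'$ is simplicial with $L_i$, $K_i$ triangulated by subcomplexes, replaces $p_i|_{Q_{i+1}}\:Q_{i+1}\to S'_i$ by its conization $q_i$ (see \cite{M00}*{\S\ref{book:polytopal complexes}}), and defines $R'_i$ as the polyhedron $|P_{i+1}\cup_{q_i}S'_i|$ of the \emph{simplicial} adjunction, endowed with its metric topology; the factorization into polyhedral maps is then given by \cite{M00}*{Lemma \ref{book:conical adjunction}(a)}. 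With this model the map $R_{i+1}\to R'_i$ is a polyhedral map but no longer a topological quotient map, so your appeals to saturated open sets in checking (C3) must be replaced by the corresponding metric-topology statements, which is what the cited lemma supplies. (Whether one collapses along $L_{i+1}$, as the paper does, or along all of $p_i^{-1}(K_i)$, as you do, is by comparison an inessential difference.)
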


Let us note that the new inverse sequence has the same limit as $R$.
 
\begin{proof} We may assume that each $R_i$ is triangulated by a simplicial complex $P_i$ which has an admissible 
simplicial subdivision $P_i'$ such that each $p_i\:P_{i+1}\to P_i'$ is simplicial and $L_i$, $K_i$ are 
triangulated by subcomplexes $Q_i$, $S_i$ of $P_i$.
Let $S'_i$ be the subcomplex of $P'_i$ subdividing $S_i$ and let $q_i\:Q_{i+1}\to S'_i$ be the conization of 
$p_i|_{Q_{i+1}}$ (see \cite{M00}*{\S\ref{book:polytopal complexes}}).
Let $R'_i=|P_{i+1}\cup_{q_i}S'_i|$ (see \cite{M00}*{\S\ref{book:mcn}}).
Then the assertion on factorization follows from \cite{M00}*{Lemma \ref{book:conical adjunction}(a)}.
\end{proof}

\begin{lemma} \label{pre-poly}
Let $\dots\xr{p_2}R_2\xr{p_1}R_1$ be a weakly pre-compactohedral inverse sequence and let $K_i$ and $L_i$ be 
its associated compact and closed subpolyhedra.
Let $X=\lim R_i$ and $K=\lim K_i$.
Then $X\but K$ is a polyhedron.
\end{lemma}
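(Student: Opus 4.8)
The plan is to realize $X\but K$ as an increasing union of open subsets, each carrying a polyhedral structure pulled back from an open subpolyhedron of some $R_n$, with polyhedral transition maps, and then to invoke the gluing lemma for polyhedra exactly as in the proof of Theorem \ref{coronated-poly}. Write $p^\infty_n\:X\to R_n$ for the projection of the inverse limit and set $U_n\bydef (p^\infty_n)^{-1}(R_n\but L_n)$, which is open in $X$ since $L_n$ is closed in $R_n$. First I would check that $U_n\subset U_{n+1}$ and that $X\but K=\bigcup_n U_n$, where a point $(x_i)\in X$ lies in $K=\lim K_i$ precisely when $x_i\in K_i$ for every $i$. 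Both assertions rest on (C1) and (C2$'$): if $x_n\notin L_n$ then $x_n\notin K_n$, and since $p_n(x_{n+1})=x_n$ while $p_n(L_{n+1})\subset K_n\subset L_n$, we get $x_{n+1}\notin L_{n+1}$, giving $U_n\subset U_{n+1}$. The same implication $x_n\notin K_n\Rightarrow x_{n+1}\notin L_{n+1}$ shows that $(x_i)\notin K$, i.e. some $x_i\notin K_i$, forces $(x_i)\in U_{i+1}$; conversely $U_n\subset X\but K$ because $x_n\notin L_n\supset K_n$.

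The heart of the argument is to show that $p^\infty_n$ restricts to a homeomorphism $U_n\to R_n\but L_n$. By (C3$'$) each $p_i$ is a bijection from $p_i^{-1}(R_i\but L_i)$ onto $R_i\but L_i$, so every point of $R_n\but L_n$ has a unique $p_n$-preimage, which lies in $R_{n+1}\but L_{n+1}$ by the (C2$'$) computation above; iterating upward produces a unique lift of any $x_n\in R_n\but L_n$ to a compatible sequence with $x_j\notin L_j$ for all $j\ge n$, and applying the bonding maps downward fixes the remaining coordinates. This exhibits $p^\infty_n|_{U_n}$ as a continuous bijection onto $R_n\but L_n$. For continuity of the inverse I would test it against each projection $p^\infty_m$: for $m\le n$ the composite is a product of bonding maps, while for $m>n$ it is the finite composite $\sigma_{m-1}\circ\dots\circ\sigma_n$ of the inverse homeomorphisms $\sigma_i\bydef(p_i|_{p_i^{-1}(R_i\but L_i)})^{-1}$, each of which maps $R_i\but L_i$ into $R_{i+1}\but L_{i+1}$ and is continuous by (C3$'$). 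Since $R_n\but L_n$ is an open subset of the polyhedron $R_n$, it is a polyhedron, and transporting this structure along $p^\infty_n|_{U_n}$ endows $U_n$ with a polyhedral structure.

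It remains to check that these structures are mutually compatible, so that the gluing lemma of the proof of Theorem \ref{coronated-poly} (namely \cite{M00}*{Lemma \ref{book:atlas5}}) applies to the open cover $X\but K=\bigcup_n U_n$. Under the identifications $U_n\cong R_n\but L_n$ and $U_{n+1}\cong R_{n+1}\but L_{n+1}$, the inclusion $U_n\hookrightarrow U_{n+1}$ corresponds to $p^\infty_{n+1}\circ(p^\infty_n|_{U_n})^{-1}$, which sends $x_n$ to its unique lift $x_{n+1}=\sigma_n(x_n)$, i.e. to the inverse of the polyhedral homeomorphism $p_n\:p_n^{-1}(R_n\but L_n)\to R_n\but L_n$ of (C3$'$). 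As both sides are open subpolyhedra and $p_n$ is polyhedral, this inverse is a polyhedral open embedding by the homeomorphism-restriction lemma \cite{M00}*{Lemma \ref{book:homeo-restriction}}, so the transition maps of the structures are polyhedral and the gluing lemma yields that $X\but K$ is a polyhedron. The main obstacle I anticipate is the homeomorphism step of the second paragraph: one must verify that the infinite upward lifting is well defined and continuous in the inverse-limit topology, and it is precisely here that (C2$'$) and (C3$'$) enter essentially — (C3$'$) to lift uniquely at each stage and (C2$'$) to guarantee that the lift never re-enters any $L_j$.
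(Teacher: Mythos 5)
Your proposal is correct and takes essentially the same route as the paper's own proof: the paper likewise sets $Q_i\bydef(p^\infty_i)^{-1}(R_i\but L_i)$, shows $\bigcup_i Q_i=X\but K$, identifies each $Q_i$ with the open subpolyhedron $R_i\but L_i$ using (C3$'$), notes that these identifications and the transition maps are polyhedral by the homeomorphism-restriction lemma, and concludes with the same gluing lemma \cite{M00}*{Lemma \ref{book:atlas5}}. The only difference is expository: your second paragraph spells out the inverse-limit homeomorphism $U_n\cong R_n\but L_n$ that the paper leaves implicit.
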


In particular, this implies that $X$ is a coronated polyhedron.

\begin{proof}
We may assume that the $R_i$ are affine polyhedra.
Since $K=\bigcap_i (p^\infty_i)^{-1}(K_i)$, we have $X\but K=\bigcup_i P_i$, where 
$P_i=(p^\infty_i)^{-1}(R_i\but K_i)$.
Let $Q_i=(p^\infty_i)^{-1}(R_i\but L_i)$.
Since $K_i\subset L_i$, we have $R_i\but L_i\subset R_i\but K_i$, whence $Q_i\subset P_i$.
Since $p_i(L_{i+1})\subset K_i$, we have $p_i^{-1}(R_i\but K_i)\subset R_{i+1}\but L_{i+1}$, whence
$P_i\subset Q_{i+1}$.
Consequently $\bigcup_i Q_i=\bigcup_i P_i=X\but K$.
By \cite{M00}*{Corollary \ref{book:opensubset2}} each $R_i\but L_i$ (with the induced topology) is 
an affine polyhedron, and so is each $p_i^{-1}(R_i\but L_i)$.
Then by \cite{M00}*{Lemma \ref{book:homeo-restriction}} each homeomorphism 
$p_i^{-1}(R_i\but L_i)\xr{p_i|}R_i\but L_i$ is polyhedral.
Thus each $Q_i$ is a polyhedron and moreover the inclusion maps $Q_i\subset Q_{i+1}$ are polyhedral.
Hence by \cite{M00}*{Lemma \ref{book:atlas5}} $X\but K$ is a polyhedron.
\end{proof}

\begin{remark} Not every resolution of a coronated polyhedron is a weakly pre-compactohedral inverse sequence.
Indeed, $\dots\subset[0,1+\frac13)\subset[0,1+\frac12)$ is a resolution of $[0,1]$ (see 
\cite{M00}*{Lemma \ref{book:semi-resolution}(a)}).
Since each $[0,1+\frac1i)$ is non-compact, it is easy see that this inverse sequence is not weakly pre-compactohedral.
\end{remark}

One of the usual proofs of the Lebesgue lemma works to show the following.

\begin{lemma}[\cite{M2}*{\ref{metr:lebesgue+}}] \label{lebesgue}
Let $X$ be a metric space and $K\subset X$ be compact.
Then for every open cover $C$ of $X$ there exists an open neighborhood $O$ of $K$ and a $\lambda>0$ such that every 
closed ball $B_\lambda(x)$, where $x\in O$, lies in some element of $C$.
\end{lemma}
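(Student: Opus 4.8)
The plan is to adapt the standard ``shrinking balls'' proof of the Lebesgue number lemma, building in a factor of two that simultaneously upgrades the conclusion from open balls centred in $K$ to closed balls centred in a whole neighbourhood of $K$.

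First I would use the openness of the cover together with the compactness of $K$ to extract finite data. For each $x\in K$ pick an element $U_x\in C$ containing $x$ (such a $U_x$ exists because $C$ covers all of $X$) and choose $r_x>0$ so that the open ball of radius $2r_x$ about $x$ is contained in $U_x$. The open $r_x$-balls about the points of $K$ then form an open cover of the compactum $K$, so finitely many of them, centred at $x_1,\dots,x_n$, already cover $K$.

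Then I would set $O=\bigcup_{i=1}^n B_{r_{x_i}}(x_i)$, which is an open neighbourhood of $K$ as a union of open balls, and $\lambda=\min_i r_{x_i}>0$. Given any $x\in O$, we have $d(x,x_i)<r_{x_i}$ for some $i$. For every point $z$ of the closed ball $B_\lambda(x)$ the triangle inequality gives $d(z,x_i)\le d(z,x)+d(x,x_i)<\lambda+r_{x_i}\le 2r_{x_i}$, so $z$ lies in the open $2r_{x_i}$-ball about $x_i$ and hence in $U_{x_i}$. Thus $B_\lambda(x)\subset U_{x_i}\in C$, which is exactly what is claimed.

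I do not expect any real obstacle here. Once the radius $2r_x$ is chosen in the first step, the extra slack absorbs both the passage to the closed ball (the term $d(z,x)\le\lambda$) and the passage from $K$ to the neighbourhood $O$ (the term $d(x,x_i)<r_{x_i}$), so the single estimate above closes the argument. The only point to keep in mind is that the finite subcover must be extracted at the level of the $r_x$-balls rather than the $U_x$ themselves, so that $\lambda$ can be taken to be the minimum of the finitely many radii $r_{x_i}$.
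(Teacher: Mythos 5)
Your proof is correct and is essentially the argument the paper intends: the paper itself gives no written proof, only the remark that ``one of the usual proofs of the Lebesgue lemma works'' together with a citation to \cite{M2}, and your shrinking-balls argument with the doubled radius is precisely such an adaptation. The key estimate $d(z,x_i)\le d(z,x)+d(x,x_i)<\lambda+r_{x_i}\le 2r_{x_i}$ correctly absorbs both the passage to closed balls and the enlargement of the set of centres from $K$ to the neighborhood $O$.
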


\begin{theorem} \label{resolution theorem}
Every weakly pre-compactohedral inverse sequence is a resolution of its limit.
\end{theorem}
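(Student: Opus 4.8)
The plan is to verify the two conditions that characterize a resolution for an inverse sequence of polyhedra (equivalently, of ANRs): the semi-resolution condition (B1), that every neighborhood $U$ of $p^\infty_i(X)$ in $R_i$ contains $p^j_i(R_j)$ for some $j\ge i$; and the covering condition (B2), that every open cover of $X=\lim R_i$ is refined by $(p^\infty_n)^{-1}(D)$ for some $n$ and some open cover $D$ of $R_n$. These are exactly the conditions used in the proofs of Theorem \ref{coronated-anr} and Lemma \ref{B2-application}, and for ANR sequences they are equivalent to being a resolution (\cite{M00}). By Lemma \ref{pre-compactohedral} I may replace $R$ by a weakly compactohedral sequence with the same limit; since the factorization interleaves the two sequences into a single one of which each is a cofinal subsequence, and being a resolution passes between an inverse sequence and its cofinal subsequences (\cite{M00}*{Lemmas \ref{book:chain}, \ref{book:cofinal-resolution}}), it suffices to treat the weakly compactohedral case. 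Throughout I use the compact set $K=\lim K_i$ and the fact, implicit in the proof of Lemma \ref{pre-poly}, that $p^\infty_i$ restricts to a homeomorphism of $P_i\bydef(p^\infty_i)^{-1}(R_i\but K_i)$ onto $R_i\but K_i$, and that $R_i\but K_i\subseteq p^\infty_i(X)$; write $s_i$ for the inverse of $p^\infty_i|_{P_i}$.

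For (B1), fix $i$ and open $U\supseteq p^\infty_i(X)$. Since $R_i\but K_i\subseteq p^\infty_i(X)\subseteq U$, the set $C\bydef R_i\but U$ is a compact subset of $K_i$ disjoint from $p^\infty_i(X)$. The key point is that $(p^j_i)^{-1}(C)\subseteq K_j$ for every $j\ge i$: a point of $R_j\but K_j$ mapping into $C$ would, via $s_j$, lift to a point of $X$ mapping into $C$, contradicting $C\cap p^\infty_i(X)=\emptyset$. Hence the sets $(p^j_i)^{-1}(C)$ form an inverse sequence of compacta whose limit is $(p^\infty_i)^{-1}(C)=\emptyset$, so one of them is already empty; that is, $p^j_i(R_j)\subseteq U$ for some $j$, which is (B1).

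For (B2), let $C$ be an open cover of $X$. By Lemma \ref{lebesgue} applied to the compact set $K$ there are an open neighborhood $O\supseteq K$ and a $\lambda>0$ such that every closed ball $B_\lambda(\xi)$ with $\xi\in O$ lies in an element of $C$. A compactness argument yields an $n$ with $(p^\infty_n)^{-1}(K_n)\subseteq O$: otherwise one could pick $\eta^{(n)}\in(p^\infty_n)^{-1}(K_n)\but O$, and since $\eta^{(n)}_m\in p^n_m(K_n)\subseteq K_m$ for every $m\le n$, a diagonal extraction over the compacta $K_m$ produces a subsequence converging coordinatewise, hence in $X$, to a point of $K\subseteq O$, contradicting $\eta^{(n)}\notin O$. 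The same type of argument shows that, after enlarging $n$, the fibres of $p^\infty_n|_K$ have diameter $<\lambda$. One then assembles $D$ from two parts: on $R_n\but K_n$ the homeomorphism $p^\infty_n|_{P_n}$ transports $C$ to a cover whose pullback lies, set by set, inside an element of $C$; and near $K_n$ one uses small open sets whose preimages are confined to $O$ and have diameter $<\lambda$, so that each lies in a single $\lambda$-ball and hence in an element of $C$. With $(p^\infty_n)^{-1}(D)$ refining $C$, this gives (B2), and (B1) together with (B2) gives the resolution.

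The main obstacle is the last clause of (B2). A basic open set $d$ about a point $y\in K_n$ necessarily meets $R_n\but K_n$, so its preimage also contains the values $s_n(d\but K_n)$, i.e.\ points of the non-compact polyhedron $X\but K$ whose projection to $R_n$ still lies in $K_n$; these ``tails'' need not be small nor contained in $O$, since $s_n$ may blow up near the frontier of $K_n$. The decisive step is therefore to produce, at a sufficiently large level, an open $W\supseteq K_n$ with $(p^\infty_n)^{-1}(W)\subseteq O$, which amounts to controlling the section $s_n$ near $\Fr K_n$ over the levels $m\ge n$; here the polyhedrality of the bonding maps and a further compactness argument are what make the construction go through. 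Once each member of $D$ has preimage inside $O$ of diameter $<\lambda$, Lemma \ref{lebesgue} completes the refinement.
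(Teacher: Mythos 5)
Your treatment of (B1) is correct and complete, and is in fact a nice alternative to the paper's: the paper deduces (B1) from the fact that $\dots\to K_1\to K_0$ is a resolution of $K$ together with the decomposition $p^m_{m-1}(R_m)\subset K_{m-1}\cup p^m_{m-1}(R_m\but L_m)$, whereas your argument --- that $C\bydef R_i\but U$ is a compactum inside $K_i$ whose preimages $(p^j_i)^{-1}(C)$ stay inside the $K_j$, so that emptiness of their inverse limit forces some term to be empty --- is self-contained. The preliminary reduction to the weakly compactohedral case via Lemma \ref{pre-compactohedral} and interleaving is also legitimate, though unnecessary (the paper works directly with the $L_i$).

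The genuine gap is in (B2), at exactly the point you yourself call ``the decisive step''. You correctly produce an $n$ with $(p^\infty_n)^{-1}(K_n)\subset O$, and you correctly identify the difficulty: an open set $d$ meeting $K_n$ has a preimage that also contains the tail points $s_n(d\but K_n)$. But your proposed resolution --- find an open $W\supset K_n$ with $(p^\infty_n)^{-1}(W)\subset O$, ``by polyhedrality of the bonding maps and a further compactness argument'' --- is never carried out, and nothing in your text shows such a $W$ exists: its preimage necessarily contains the tails over all of $W\but K_n$, which are precisely the points you admit you cannot control. This missing step is the entire content of the theorem beyond what you proved. The paper closes it by a different, metric mechanism, which makes your obstacle disappear rather than confronting it: fix metrics of diameter $\le 1$ on the $R_i$ and put $d\big((x_i),(y_i)\big)=\sup_i 2^{-i}d_i(x_i,y_i)$ on $X\subset\prod_i R_i$. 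If $2^{-m+1}<\lambda$ and $n>m$ is chosen as in your first compactness step, then the cover $D$ near $L_n$ consists of $2^{-m}$-balls centered at points $x\in L_n$, and the preimage of such a ball lies in the $2^{-m+1}$-ball about any thread $y\in(p^\infty_n)^{-1}(x)$: coordinates above $n$ contribute at most $2^{-n-1}<2^{-m}$ to the distance \emph{no matter where the tails wander}, and coordinates at or below $n$ are controlled by the $n$-th one. Since $y\in(p^\infty_n)^{-1}(L_n)\subset O$ (the containment you did establish), Lemma \ref{lebesgue} places the whole preimage inside an element of $C$. In other words, the tails need not be confined to $O$ at all, and contrary to your claim they \emph{are} automatically small in the inverse-limit metric; what is needed is only that each preimage lie within $\lambda$ of a point of $O$, and that is free from the weights $2^{-i}$. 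Without this (or an actual proof of your $W$-claim), your proposal does not prove the theorem.
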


\begin{proof} Let $\dots\xr{p_2} R_1\xr{p_1} R_0$ be a weakly pre-compactohedral inverse sequence, and let 
$K_i$ and $L_i$ be its associated compact and closed subpolyhedra.
Let $X=\lim R_i$ and $K=\lim K_i$.

Let us verify condition (B1): For each $n$ and every neighborhood $U$ of $p^\infty_n(X)$ in $R_n$ 
there exists an $m\ge n$ such that $p^m_n(X_m)\subset U$.
Now $\dots\to K_1\to K_0$ is a resolution of $K$ (see \cite{M00}*{Lemma \ref{book:compact-resolution}}), 
in particular, it satisfies (B1); thus there exists an $m>n$ such that $p^{m-1}_n(K_{m-1})\subset U\cap K_n$.
On the other hand, by the definition of a weakly pre-compactohedral inverse sequence, 
$p^m_{m-1}(R_m)$ lies in $K_{m-1}\cup p^m_{m-1}(R_m\but L_m)$. 
Hence $p^m_n(R_m)$ lies in 
$p^{m-1}_n\big(K_{m-1}\cup p^m_{m-1}(R_m\but L_m)\big)=p^{m-1}_n(K_{m-1})\cup p^m_n(R_m\but L_m)$.
Here $p^{m-1}_n(K_{m-1})\subset U$ by the above.
Finally, $R_m\but L_m$ lies in $p^\infty_m(X)$, so $p^m_n(R_m\but L_m)$ lies in 
$p^m_n\big(p^\infty_m(X)\big)=p^\infty_n(X)$, and therefore also in $U$.

Let us verify condition (B2): For every open cover $C$ of $X$ there exists an open cover $D$ of some $R_n$ 
such that $(p^\infty_n)^{-1}(D)$ refines $C$.
Let us fix some metric $d_i$ on each $R_i$ such that $R_i$ has diameter $\le 1$, and let us fix the metric
$d\big((x_i),(y_i)\big)=\sup_{i\in\N}2^{-i}d_i(x_i,y_i)$ on their inverse limit $X\subset\prod_i R_i$.
By Lemma \ref{lebesgue} there exists an open neighborhood $O$ of $K$ and a $\lambda>0$ such that every 
closed ball $B_\lambda(x)$, where $x\in O$, lies in some element of $C$.
Let us choose $m$ so that $2^{-m+1}<\lambda$ and $O$ contains the $2^{-m}$-neighborhood of $K$.
Let $U_m$ be the $2^{-m-1}$-neighborhood of $f^\infty_m(K)$ in $K_m$.
Now $\dots\to K_1\to K_0$ is a resolution of $K$ (see \cite{M00}*{Lemma \ref{book:compact-resolution}}), 
in particular, it satisfies (B1); thus there exists an $n>m$ such that $p^{n-1}_m(K_{n-1})\subset U_m$.
Let $V=(p^\infty_{n-1})^{-1}(K_{n-1})$.
Then $p^\infty_m(V)\subset U_m$, and it follows that $V$ lies in the $2^{-m}$-neighborhood of $K$ in $X$, 
hence in $O$.

Let $D_K$ be the open cover of the $2^{-m}$-neighborhood of $L_n$ by all balls of radius $2^{-m}$ centered 
at points of $L_n$.
Let $D_P$ be the open cover of $R_n\but L_n$ by the sets $p^\infty_n(W)\but L_n$ 
(which are homeomorphic copies of the sets $W\but (p^\infty_n)^{-1}(L_n)$) for all $W\in C$.
Then $D\bydef D_K\cup D_P$ is an open cover of $R_n$.
Clearly, $(p^\infty_n)^{-1}(D_P)$ refines $C$.
Since $p^n_{n-1}(L_n)\subset K_{n-1}$, we have $L_n\subset (p^n_{n-1})^{-1}(K_{n-1})$ and consequently
$(p^\infty_n)^{-1}(L_n)\subset (p^\infty_{n-1})^{-1}(K_{n-1})=V$.
Thus if $x\in L_n$ and $y\in (p^\infty_n)^{-1}(x)$, then $y$ lies in $V$ and hence in $O$.
Since $n\ge m+1$, we have $(p^\infty_n)^{-1}\big(B_{2^{-m}}(x)\big)\subset B_{2^{-m+1}}(y)$.
Then each element of $(p^\infty_n)^{-1}(D_K)$ lies in the ball of radius $2^{-m+1}$ centered at some point of $O$.
Hence $(p^\infty_n)^{-1}(D_K)$ refines $C$.
Thus $(p^\infty_n)^{-1}(D)$ also refines $C$.
\end{proof}

\subsection{Constructing a sequential resolution}

\begin{theorem} \label{compactohedral}
Let $X$ be a metrizable space and $K$ a compact subset of $X$ such that $X\but K$ is a polyhedron.
Then $X$ is the limit of a compactohedral inverse sequence.
\end{theorem}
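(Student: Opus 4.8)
The plan is to build a \emph{pre-compactohedral} inverse sequence whose limit is homeomorphic to $X$, and then invoke Lemma \ref{pre-compactohedral} to pass to a genuinely \emph{compactohedral} sequence with the same limit. Write $P\bydef X\but K$; since $K$ is compact, it is closed, so $P$ is an open subpolyhedron of $X$. The geometric idea is that $X=K\cup P$ should be recovered as the limit of polyhedra $R_i=K_i\cup P$, in which the compactum $K$ is replaced by better and better compact polyhedral approximations $K_i$, while $P$ is carried along essentially unchanged; the bonding maps will collapse $K_{i+1}$ onto $K_i$ and act as homeomorphisms on the part of $P$ lying away from $K$.

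First I would fix a resolution $\dots\xr{q_2}K_2\xr{q_1}K_1$ of the compactum $K$ by compact polyhedra and polyhedral maps, with projections $r_i\:K\to K_i$ satisfying $q_ir_{i+1}=r_i$ and $K=\lim K_i$; this is the classical fact that every compactum is an inverse limit of compact polyhedra. Since each $K_i$ is an ANR, the map $r_i$ extends to a map $\rho_i$ of some open neighborhood $W_i$ of $K$ in $X$ into $K_i$. Shrinking, I would arrange the $W_i$ to be nested with $\bigcap_i W_i=K$ (say inside the $\frac1i$-neighborhoods of $K$), and, using the resolution property together with the ANR homotopy extension theorem, arrange the compatibility $q_i\rho_{i+1}=\rho_i$ on a still smaller neighborhood. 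On the open subpolyhedron $W_i\cap P$ the map $\rho_i$ is a map of polyhedra, so I would replace it by a PL map $g_i\:A_i\to K_i$ on a collar $A_i\i W_i\cap P$ of the end of $P$ approaching $K$.

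Next I would define $R_i$ by attaching $P$ to $K_i$ along the mapping cylinder of $g_i$, i.e.\ $R_i\bydef P\cup_{A_i}MC(g_i)$, so that $K_i$ becomes a compact subpolyhedron, $R_i\but K_i\cong P$, and a closed mapping-cylinder neighborhood $L_i\supset K_i$ with $K_i\i\Int L_i$ is available. That $R_i$ is again a polyhedron (and metrizable in spite of its non-compactness) follows from the polyhedral adjunction and metrization results of \cite{M00} (cf.\ the proof of Theorem \ref{coronated-anr2} and \cite{M00}*{Lemma \ref{book:conical adjunction}, Corollary \ref{book:MC-ANR}, Corollary \ref{book:amalgam-metrization2}, Lemma \ref{book:atlas5}}), with the single absolute retract target used there replaced by the sequence of polyhedra $K_i$. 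The bonding map $p_i\:R_{i+1}\to R_i$ is defined to agree with $q_i$ on $K_{i+1}$ and with the (essentially identity) homeomorphism on the copy of $P$, matched over the collars; checking that $p_i$ is polyhedral and restricts to a homeomorphism of $\overline{R_{i+1}\but L_{i+1}}$ onto $\overline{R_i\but L_i}$ yields conditions (C0), (C1), (C2$''$), (C3$''$), so that $\dots\to R_2\to R_1$ is pre-compactohedral.

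It then remains to identify $\lim R_i$ with $X$, and here lies the main work. One shows that a thread $(x_i)\in\lim R_i$ is either eventually contained in the outer copies of $P$, where all bonding maps are homeomorphisms, so that it corresponds to a single point of $P$, or stays inside the collars and the $K_i$, where it corresponds to a point of $K=\lim K_i$; because the collars $A_i\i W_i$ shrink to $K$, every point of $P$ is eventually outer, and the mapping-cylinder gluings converge to the original accumulation of $P$ onto $K$, giving a homeomorphism $\lim R_i\cong K\cup P=X$. The delicate point throughout is the simultaneous control of the neighborhoods $W_i$, the collars $A_i$, and the strict compatibility $q_i\rho_{i+1}=\rho_i$ of the extensions, so that the pre-compactohedral conditions hold on the nose and the pinching in the limit reproduces exactly the topology of $X$ rather than a space merely shape equivalent to it. Granting this, Lemma \ref{pre-compactohedral} converts the pre-compactohedral sequence into a compactohedral one without changing the limit, and the theorem follows.
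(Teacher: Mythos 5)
Your overall strategy is the paper's: approximate $K$ by compact polyhedra $K_i$, glue $P$ onto $K_i$, make the bonding maps collapse $K_{i+1}$ into $K_i$ while restricting to homeomorphisms away from them, and identify the limit with $X$ (the paper in fact produces a compactohedral sequence directly, but your reduction through Lemma \ref{pre-compactohedral} is legitimate). However, two steps that you treat as arrangeable are genuine gaps, and they are exactly where the paper's machinery does its work. The first is the strict compatibility $q_i\rho_{i+1}=\rho_i$. Producing $\rho_{i+1}$ with $q_i\rho_{i+1}=\rho_i$ on the nose is a lifting problem through $q_i\:K_{i+1}\to K_i$, which is not a fibration, so neither the resolution property nor the homotopy extension theorem yields it: a homotopy $q_i\rho_{i+1}\simeq\rho_i$ rel $K$ cannot in general be traded for an equality without lifting that homotopy through $q_i$, and redefining $\rho_i\bydef q_i\rho_{i+1}$ instead triggers an infinite cascade of corrections that never stabilizes, since the domains shrink to $\bigcap_i W_i=K$. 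This coherence problem is precisely why the paper works with a single map $f\:X\to|K|_{[0,\infty]}$ into the compactified telescope extending $\id_K$ --- whose existence is a nontrivial input requiring $K_0=pt$ (\cite{M-I}*{Lemma \ref{fish:Milnor}(a)}) --- and then makes everything simultaneously polyhedral by one application of Sakai's simplicial approximation theorem; the gluing maps $\phi_n$ are then restrictions of one simplicial map $\phi$ and are strictly compatible for free.

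The second gap is fatal to the model itself: with $R_i\bydef P\cup_{A_i}MC(g_i)$ glued along $A_i=A_i\x\{0\}$, the space $R_i\but K_i$ is not a copy of $P$ but $P$ with a fin $A_i\x[0,1)$ attached; and since the collars shrink, the fin over $A_i\but A_{i+1}$ is not even in the image of $p_i$, so (C3)/(C3$''$) fail. Worse, $K_i$ sits at the far end of a cylinder of length $1$ from the copy of $P$, uniformly in $i$, so the limit topology disconnects $K$ from $P$. Concretely, for $X=[0,1)$ and $K=\{0\}$ one has $K_i=pt$ and $MC(g_i)$ is a cone on $A_i=(0,\frac1i)$; the only thread lying in the cone parts is the vertex thread, it is isolated in $\lim R_i$, and $\lim R_i\cong(0,1)\sqcup pt\ne X$. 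What is needed is to glue the frontier of an exhausting polyhedral piece of $P$ directly onto $K_i$ (an adjunction, not a unit-length cylinder), with the mesh of the triangulations tending to $0$ near $K$; this is what the paper's mesh lemma, the metric of $|K|_{[0,\infty]}$, and the conical adjunctions $R_n=|L_{[0,n]}\cup_{\phi_n}K_n|$ provide, and even then the continuity of $r^{-1}\:R_\infty\to X$ is a separate, delicate argument. You explicitly defer both the limit identification and the compatibility (``the main work'', ``the delicate point''), but those two points \emph{are} the proof; what your outline supplies is the routine part.
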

 
\begin{proof} We can represent $K$ as the limit of an inverse sequence $\dots\xr{q_1}|K_1|\xr{q_0}|K_0|$
where each $K_i$ is a finite simplicial complex and each $q_i$ is a simplicial map $K_{i+1}\to K_i'$, where
$K_i'$ is a subdivision of $K_i$. 
We may assume that $K_0=pt$.
Let $|K|_{[0,\infty]}=|K|_{[0,\infty)}\cup K$ be the compactified mapping telescope.
Let us fix some metrics on $X$ and on $|K|_{[0,\infty)}$.
There exists a homeomorphism $h\:|K|_{[0,\infty)}\to|K_{[0,\infty)}|$ 
(see \cite{M00}*{Theorem \ref{book:telescope-theorem}}); by construction, the diameters of the images in
$|K|_{[0,\infty]}$ of the simplexes of $K_{[0,\infty)}$ tend to $0$ as they approach $K$.

Let $P=X\but K$.
By \cite{M00}*{Lemma \ref{book:mesh-lemma}} there exists a homeomorphism $H\:P\to |L|$
for some affine simplicial complex $L$ such that the diameters of the images in $X$ of the simplexes of $L$ 
tend to $0$ as they approach $K$.
Let $N_\eps$ be the open $\eps$-neighborhood of $K$ in $X$, and let $\Lambda_n$ be the subcomplex of $L$
consisting of all simplexes that intersect $H(X\but N_{1/n})$ and all their faces.
Since $K_0=pt$, it follows that $\id_K$ extends to a map $f\:X\to |K|_{[0,\infty]}$ sending 
$P$ into $|K|_{[0,\infty)}$ (see \cite{M-I}*{Lemma \ref{fish:Milnor}(a)}).
Using the deformation retractions $|K|_{[0,\infty]}\to |K|_{[0,n]}$ 
(see \cite{M00}*{Proposition \ref{book:telescope retraction}}), we may assume that $f$ sends each 
$H^{-1}(|\Lambda_n|)$ into $|K|_{[0,n-1]}$.
By Sakai's theorem (see \cite{M00}*{Theorem \ref{book:simp-appr0}}) there exists an admissible subdivision $L'$
of $L$ such that the composition
$|L|\xr{H^{-1}}P\xr{f|_P}|K|_{[0,\infty)}\xr{h}|K_{[0,\infty)}|$ is $C$-homotopic to a simplicial map 
$\phi\:L'\to K_{[0,\infty)}$, where $C$ is the cover of $|K_{[0,\infty)}|$ by the open stars of vertices of
$K_{[0,\infty)}$.
Due to the condition on the diameters of simplexes, the composition
$P\xr{H}|L|\xr{\phi}|K_{[0,\infty)}|\xr{h^{-1}}|K|_{[0,\infty)}$
extends via $\id_K$ to a continuous map $g\:X\to |K|_{[0,\infty]}$.
On the other hand, clearly, $\phi$ sends each $|\Lambda_n|$ into $|K_{[0,n]}|$.
For any $J\subset [0,\infty)$ let $L_J$ denote the subcomplex of $L'$ such that $|L_J|=\phi^{-1}(|K_J|)$ and 
let $\phi_J$ denote the conization of the simplicial map $\phi|_{|L_J|}\:L_J\to K_J$ 
(see \cite{M00}*{\S\ref{book:polytopal complexes}}). 
We also write $L_{\{n\}}=L_n$ and $\phi_{\{n\}}=\phi_n$.
Thus we have $|\Lambda_n|\subset|L_{[0,n]}|$, and consequently
$X\but|L_{[0,n]}|\subset X\but|\Lambda_n|\subset N_{1/n}$.

Let $R_n=|L_{[0,n]}\cup_{\phi_n}K_n|$ (see \cite{M00}*{\S\ref{book:mcn}}).
Similarly let $R_{[n,n+1]}=|L_{[0,n+1]}\cup_{\phi_{[n,n+1]}}K_{[n,n+1]}|$.
Then \cite{M00}*{Lemma \ref{book:conical adjunction}(b)} yields a polyhedral map $R_{n+1}\to R_{[n,n+1]}$,
and the polyhedral retraction $|K_{[n,n+1]}|\to|K_n|$ yields a polyhedral map $R_{[n,n+1]}\to R_n$.
Let $p_n\:R_{n+1}\to R_n$ be their composition.
Let $R_{[n,\infty)}=|L_{[0,\infty)}\cup_{\phi_{[n,\infty)}}K_{[n,\infty)}|$ and let $r_n$ be 
the composition $X\to R_{[n,\infty)}\cup K_\infty\to R_n$.
It follows from \cite{M00}*{Lemma \ref{book:conical adjunction}(b)} that each composition 
$X\xr{r_{n+1}}R_{n+1}\xr{p_n}R_n$ equals $r_n$.

By construction $\dots\xr{p_1}R_1\xr{p_0}R_0$ is a compactohedral inverse sequence.
Let $R_\infty$ be its limit.
The maps $r_n$ yield a continuous map $r\:X\to R_\infty$ such that each composition 
$X\xr{r}R_\infty\xr{p^\infty_n}R_n$ equals $r_n$.
Clearly, $r$ is bijective, and $r|_P$ and $r|_K$ are homeomorphisms onto their images.
To show that $r^{-1}$ is continuous, it remains to show that a sequence of points $x_i\in P$ converges
to a point $x\in K$ if the sequence $r(x_i)$ converges to $r(x)$.

Suppose on the contrary that $r(x_i)\to r(x)$ but $x_i\not\to x$.
Since $p^\infty_nr=r_n$, we get that for each $n$ the sequence $r_n(x_i)$ converges to $r_n(x)$.
Suppose that there exists an $m$ such that $H^{-1}(|L_{[0,m]}|)$ contains an infinite subsequence $x_{i_j}$.
The image of $|L_{[0,m]}|$ in $R_{m+1}$ is a closed subset of $R_{m+1}$, disjoint from $K_{m+1}$.
Hence $r_n(x_i)\not\to r_n(x)$, which is a contradiction.
Thus we obtain that for each $m$ all except finitely many of the $x_i$ lie in $X\but|L_{[0,m]}|\subset N_{1/m}$. 
Thus $d(x_i,K)\to 0$ as $i\to\infty$.
Let us pick points $y_i\in K$ such that $d(x_i,y_i)=d(x_i,K)$.
Since $x_i\not\to x$, there exists an infinite subsequence $x_{i_j}$ such that $x$ is not its cluster point.
Since $K$ is compact, the sequence $y_{i_j}$ has a cluster point $y$.
Then $y$ is also a cluster point of the sequence $x_{i_j}$.
Since $r$ is continuous, $r(y)$ is a cluster point of $r(x_{i_j})$.
But we have assumed that $r(x_i)\to r(x)$, so $r(y)=r(x)$ and consequently $y=x$.
Thus $x$ is a cluster point of $x_{i_j}$, which is a contradiction.
\end{proof}

\begin{corollary} \label{poly-main} Let $X$ be a metrizable space.
The following are equivalent:
\begin{enumerate}
\item $X$ is the limit of a compactohedral inverse sequence;
\item $X$ is the limit of a weakly compactohedral inverse sequence;
\item $X$ is the limit of a pre-compactohedral inverse sequence;
\item $X$ is the limit of a weakly pre-compactohedral inverse sequence;
\item $X$ admits a resolution of the form $\dots\xr{p_2} R_2\xr{p_1}R_1$, where each $R_i$ is a polyhedron
and each $p_i$ is a polyhedral map;
\item $X$ contains a compact subset $K$ such that $X\but K$ is a polyhedron.
\end{enumerate}
\end{corollary}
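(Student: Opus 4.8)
The plan is to assemble the corollary from the theorems and lemmas already proved, routing the six conditions into a cycle of implications whose underlying digraph is strongly connected. No genuinely new argument is needed: all the real content sits in Theorems \ref{compactohedral}, \ref{resolution theorem} and \ref{coronated-poly}, so the task is purely one of bookkeeping the directions.

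First I would clear away the cheap implications among the four flavours of (weak) (pre-)compactohedrality, all of which keep the same limit and hence preserve the validity of ``$X$ is the limit of such a sequence''. A compactohedral sequence is weakly compactohedral by definition, since the latter simply omits axiom (C2); this is (1)$\Rightarrow$(2). By the observation preceding Lemma \ref{pre-compactohedral}, setting $L_{i+1}=p_i^{-1}(K_i)$ converts a (weakly) compactohedral sequence into a (weakly) pre-compactohedral one without altering the sequence, giving (1)$\Rightarrow$(3) and (2)$\Rightarrow$(4). Finally (3)$\Rightarrow$(4) is a comparison of axioms for a fixed sequence: (C2$''$) yields (C2$'$) because $\Int L_i\subset L_i$, and (C3$''$) yields (C3$'$) because a homeomorphism onto $\overline{R_i\but L_i}$ restricts to a homeomorphism onto its open subset $R_i\but L_i$.

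The two substantive links are then immediate from prior results. For (4)$\Rightarrow$(5) I would invoke Theorem \ref{resolution theorem}, which says a weakly pre-compactohedral sequence is a resolution of its limit; since axiom (C0) already makes every such sequence a polyhedral one, this resolution is of the form required in (5). For (5)$\Rightarrow$(6) I would apply Theorem \ref{coronated-poly}; the only wrinkle is that that theorem asks for \emph{affine} polyhedra and polyhedral bonding maps, so I would first replace the given polyhedral resolution by a level-isomorphic one in which each $R_i$ is realized as an affine polyhedron, noting that a level isomorphism preserves both the resolution property and polyhedrality of the maps. (One could instead reach (6) straight from (4) via Lemma \ref{pre-poly}, but keeping (5) on the cycle is tidier.) The last edge (6)$\Rightarrow$(1) is exactly the construction of Theorem \ref{compactohedral}.

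Collecting the edges (1)$\to$(2), (1)$\to$(3), (2)$\to$(4), (3)$\to$(4), (4)$\to$(5), (5)$\to$(6) and (6)$\to$(1), one checks that every vertex reaches every other, so the six conditions are equivalent. I expect no real obstacle inside this proof itself; the only point demanding any care is the affine-realization reduction used for (5)$\Rightarrow$(6), and the genuine difficulty of the whole circle of ideas is concentrated in Theorem \ref{compactohedral}, which the corollary takes as given.
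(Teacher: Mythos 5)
Your proof is correct and follows essentially the same route as the paper: the trivial implications (1)$\Rightarrow$(2)$\Rightarrow$(4) and (1)$\Rightarrow$(3)$\Rightarrow$(4), followed by the cycle (4)$\Rightarrow$(5) via Theorem \ref{resolution theorem}, (5)$\Rightarrow$(6) via Theorem \ref{coronated-poly}, and (6)$\Rightarrow$(1) via Theorem \ref{compactohedral}. The only differences are cosmetic: you spell out the affine-realization step in (5)$\Rightarrow$(6), which the paper leaves implicit (it is absorbed into the proofs of Theorem \ref{coronated-poly} and Lemma \ref{pre-poly}), while the paper additionally remarks that Lemma \ref{pre-compactohedral} gives direct proofs of (3)$\Rightarrow$(1) and (4)$\Rightarrow$(2) and Lemma \ref{pre-poly} a direct proof of (4)$\Rightarrow$(6).
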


\begin{proof}
Trivially, (1)$\Rightarrow$(2)$\Rightarrow$(4) and (1)$\Rightarrow$(3)$\Rightarrow$(4).

(4)$\Rightarrow$(5) by Theorem \ref{resolution theorem}.
(5)$\Rightarrow$(6) by Theorem \ref{coronated-poly}.
(6)$\Rightarrow$(1) by Theorem \ref{compactohedral}.

Also, Lemma \ref{pre-compactohedral} yields direct proofs of (3)$\Rightarrow$(1) and (4)$\Rightarrow$(2); and
Lemma \ref{pre-poly} yields a direct proof of (4)$\Rightarrow$(6).
\end{proof}

\subsection{Alternative construction}
If the directed set of open covers of a space $X$ contains a cofinal sequence $C_1,C_2,\dots$, then the inclusion 
of the inverse sequence $\dots\rightsquigarrow N_{C_2}\rightsquigarrow N_{C_1}$ of the homotopy classes of maps 
between the nerves into the \v Cech expansion of $X$ is an isomorphism in the pro-category 
(cf.\ e.g.\ \cite{MS}*{\S I.1.1, Theorem 1}).
Although it is not true that the directed set of open covers of a coronated polyhedron $X$ contains a cofinal sequence
(unless $X$ is UC-metrizable), the desired pro-categorical consequence still holds.

\begin{theorem} \label{cech-cofinal}
The \v Cech expansion of a coronated polyhedron $X$ contains an inverse sequence whose inclusion is an isomorphism
in the pro-category.
\end{theorem}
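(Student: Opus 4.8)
The plan is to reduce the statement to the uniqueness of expansions in shape theory, using the sequential polyhedral resolution supplied by Corollary \ref{poly-main}. By that corollary $X$ admits a resolution $\mathbf{p}=(p^\infty_i)\colon X\to R$, where $R=(\dots\xr{p_2}R_2\xr{p_1}R_1)$ is a compactohedral inverse sequence of polyhedra and polyhedral maps (here $p_i\colon R_{i+1}\to R_i$ and $p^\infty_i\colon X\to R_i$). A resolution is in particular an expansion in the homotopy category of polyhedra (an \emph{HPol}-expansion in the sense of \cite{MS}), and the \v Cech expansion $\check C(X)$ is also an HPol-expansion. The only real work is that the theorem demands an inverse sequence lying \emph{inside} $\check C(X)$, so I must first realize the abstract sequence $R$ as a genuine subsystem of $\check C(X)$ cut out by nerves of a sequence of open covers of $X$.

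First I would manufacture the covers. Fix a triangulation $T_i$ of each $R_i$; the open stars of its vertices form an open cover $\mathcal U_i$ of $R_i$ whose nerve is $T_i\cong R_i$. Set $C_i\bydef(p^\infty_i)^{-1}(\mathcal U_i)$, an open cover of $X$. Using that each $p_i$ is polyhedral, I would choose the $T_i$ inductively fine enough that $\mathcal U_{i+1}$ refines $p_i^{-1}(\mathcal U_i)$; then $C_{i+1}$ refines $C_i$, so that $S\bydef(\dots\to N_{C_2}\to N_{C_1})$ is an honest inverse subsequence of $\check C(X)$, equipped with the canonical projections $\kappa_i\colon X\to N_{C_i}$. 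Because $C_i$ is the pullback of $\mathcal U_i$, the standard canonical-map-into-a-nerve construction yields polyhedral maps $\alpha_i\colon N_{C_i}\to N(\mathcal U_i)=R_i$ with $\alpha_i\kappa_i\simeq p^\infty_i$; and since $R$ is an expansion, condition (E1) factors each $\kappa_i$ (a map into the polyhedron $N_{C_i}$) up to homotopy through some $R_{j}$, giving maps $\beta_i\colon R_{j}\to N_{C_i}$ with $\beta_i p^\infty_{j}\simeq\kappa_i$. The families $\{\alpha_i\}$ and $\{\beta_i\}$ assemble into mutually inverse morphisms of the pro-category exhibiting $S\cong R$, compatibly with the structure maps $\kappa_i$ and $p^\infty_i$ out of $X$. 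Consequently $(\kappa_i)\colon X\to S$ is itself an HPol-expansion.

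It then remains to identify the inclusion. The subsystem $S\subset\check C(X)$ determines the canonical pro-morphism $r\colon\check C(X)\to S$ which on each $N_{C_i}$ is the identity, and by construction $r\circ\mathbf{p}_{\check C}$ equals the structure map $(\kappa_i)\colon X\to S$. Since both $\check C(X)$ and $S$ are HPol-expansions of $X$ under these structure maps, Morita's universal property of expansions (a morphism out of an expansion commuting with the structure maps is unique, and between two expansions it is an isomorphism; see \cite{MS}) forces $r$ to be an isomorphism in the pro-category, which is exactly the assertion.

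I expect the main obstacle to be the middle step: arranging the triangulations $T_i$ so that the pulled-back covers $C_i$ genuinely refine one another (so that $S$ is a subsystem of $\check C(X)$, and not merely pro-isomorphic to one), and checking that $\{\alpha_i\}$ and $\{\beta_i\}$ are sufficiently compatible with the bonding maps to furnish an actual pro-isomorphism $S\cong R$ rather than only level-wise homotopy equivalences. By contrast, the facts that a resolution is an HPol-expansion, that $\check C(X)$ is one, and that two expansions under $X$ are canonically isomorphic are standard and can simply be quoted.
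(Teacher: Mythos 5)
Your high-level reduction is the one the paper itself mentions (in the remark following its proof) as an alternative route: resolutions are expansions, the \v Cech system is an expansion, and two expansions of the same space are pro-isomorphic. You also correctly identify the real difficulty, namely producing a sequence that sits \emph{inside} the \v Cech system. Your construction of that sequence is workable (one quibble: ``fine enough'' triangulations is not the right mechanism; rather, since each $p_i$ is polyhedral it is simplicial with respect to admissible triangulations $T_{i+1}$ of $R_{i+1}$ and an admissible subdivision of $T_i$, and star covers of subdivisions refine star covers, which is how one arranges that $\mathcal U_{i+1}$ refines $p_i^{-1}(\mathcal U_i)$). The genuine gap is the sentence claiming that $\{\alpha_i\}$ and $\{\beta_i\}$ ``assemble into mutually inverse morphisms.'' What your data actually yields is a pair of pro-morphisms $\alpha\:S\to R$ and $\beta\:R\to S$ with $\alpha q=p$ and $\beta p=q$, where $p\:X\to R$ is the resolution/expansion and $q\bydef(\kappa_i)\:X\to S$. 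From the uniqueness of factorizations through the \emph{expansion} $p$ you do get $\alpha\beta=\id_R$; but $\beta\alpha=\id_S$ does not follow, because the corresponding uniqueness property for $q$ is exactly the assertion that $q$ is an expansion --- which is what you are trying to prove. The argument is circular, and the formal properties you have verified are genuinely insufficient: take $X=pt$, $R$ the constant tower of points, $S$ the constant tower of circles with identity bonding maps and $q$ the basepoint inclusions; then $\alpha,\beta$ with $\alpha q=p$, $\beta p=q$ exist and $\alpha\beta=\id_R$, yet $\beta\alpha\ne\id_S$ and $S$ is not an expansion of $X$. Property (E1) for $S$ does follow from $\alpha_i\kappa_i\simeq p^\infty_i$, but (E2) --- that two maps $N_{C_n}\to P$ which agree up to homotopy after composing with $\kappa_n$ already agree after composing with some refinement projection $N_{C_m}\to N_{C_n}$ --- cannot be extracted from your factorization data; it requires a geometric property of the specific covers $C_i$, and none has been established.

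This missing step is precisely the geometric heart of the paper's proof, which proceeds quite differently: it never invokes resolutions or expansion theory at all, but directly builds hybrid covers $E_n$ (a finite Lebesgue-number cover of a neighborhood of the compactum $K$, glued with open stars of a triangulation of $X\but K$ whose simplexes shrink near $K$) and then, for an arbitrary open cover $C$ of $X$, constructs a common refinement $E^C_n$ of $C$ and $E_n$ whose nerve is \emph{homeomorphic} to $N_{E_n}$ by a homeomorphism $h_C$ lying in the homotopy class of the refinement projection; the invertibility of these projections is what makes the restriction (``inclusion'') morphism an isomorphism. If you wish to salvage your reduction, there are two options: (i) prove an analogue of this invertibility, i.e.\ verify (E2) for your pulled-back covers directly; or (ii) prove and invoke a purely pro-categorical reindexing lemma: a system over a directed set $\Lambda$ that is pro-isomorphic to a tower admits an increasing sequence $\mu_1\le\mu_2\le\cdots$ in $\Lambda$ whose restriction morphism is already an isomorphism. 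Option (ii) would make your explicit $S$ unnecessary (the pro-isomorphism $\check C(X)\cong R$ follows from the quoted standard facts), but the lemma itself needs a careful ``catch-your-tail'' induction interleaving the countably many compatibility witnesses --- it is not automatic, since $(\mu_n)$ cannot be chosen cofinal in $\Lambda$ --- and you have neither stated nor proved it. As it stands, the proposal is missing the key step.
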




\begin{proof} Suppose that $K\subset X$ is a compactum and $P\bydef X\but K$ is a polyhedron.
Let us fix some metric on $X$ bounded above by $1$ and for any $A\subset X$ let $B_\eps(A)$ denote the cover of 
the closed $\eps$-neighborhood of $A$ by all closed balls $B_\eps(x)$, $x\in A$.
Let $C_0=\{X\}$, $U_0=O_0=X$ and $\lambda_0=1$.
Assume inductively that $C_n$ is a finite open cover of an open neighborhood $U_n$ of $K$ that is refined by 
$B_{\lambda_n}(O_n)$ for some open neighborhood $O_n$ of $K$ in $U_n$.
Let $C_{n+1}$ be a finite subset of $B_{\lambda_n}(O_n)$ that covers $K$; then it also covers some open 
neighborhood $U_{n+1}$ of $K$.
By Lemma \ref{lebesgue} there exists an open neighborhood $O_{n+1}$ of $K$ in $U_{n+1}$ such that 
$C_{n+1}$ is refined by $B_{\lambda_{n+1}}(O_{n+1})$ for some $\lambda_{n+1}>0$.
We may assume that $\lambda_{n+1}$ is so small that $\lambda_{n+1}\le\min(\lambda_n,\frac1{n+1})$ and
that $O_{n+1}$ contains the open $\lambda_{n+1}$-neighborhood of $X$.
But then we may assume that $O_{n+1}$ equals the open $\lambda_{n+1}$-neighborhood of $X$.
Since the metric on $X$ is bounded above by $1$, this condition also holds for $n+1=0$.
Thus we get open neighborhoods $X=U_0\supset O_0\supset U_1\supset O_1\supset\dots$ of $K$ and finite open covers 
$C_i$ of the $U_i$ such that each $C_n$ refines $B_{\lambda_{n-1}}(O_{n-1})$ and is refined by $B_{\lambda_n}(O_n)$, 
where each $O_n$ is the open $\lambda_n$-neighborhood of $X$ and each $\lambda_n\le\frac1n$.
In particular, each $C_{n+1}$ refines $C_n$.

Let $L$ be a triangulation of $P$ such that for each $n$, every simplex intersecting $O_n$ is 
of diameter $<\lambda_n-\lambda_{n+1}$.
Then for any vertex of $L$ that lies in $O_n$, its open star lies in some element of $C_n$; and a simplex of $L$
that intersects $P\but O_n$ does not intersect the closure of $O_{n+1}$.
Let $L_n$ be the subcomplex of $L$ consisting of all simplexes of $L$ that intersect $X\but O_n$ and of 
all their faces.
Then $|L_n|$ is a closed subset of the closed set $X\but O_{n+1}$, so it is closed in $X$.
Hence $V_i\bydef X\but |L_i|$ is an open neighborhood of $K$ in $O_i$.
Also $|L_n|$ lies in the interior of $X\but O_{n+1}$, which lies in $|L_{n+1}|$; thus $|L_n|\subset\Int |L_{n+1}|$. 
Since each $O_i$ lies in the $\frac1i$-neighborhood of $X$, we have $\bigcup |L_i|=P$.
Let $D_i$ be the set of all open stars of vertices of $L_i$ in $L$.
Then $E_i\bydef C_i|_{V_i}\cup D_i$ is an open cover of $X$.
Let us show that $E_{i+1}$ refines $E_i$.
Since $C_{i+1}$ refines $C_i$ and $V_{i+1}\subset V_i$, every element of $C_{i+1}|_{V_{i+1}}$ lies in some element 
of $C_i|_{V_i}$.
Every element of $D_i\subset D_{i+1}$ lies in itself, and an element of $D_{i+1}\but D_i$ is the open star of 
a vertex $v$ of $L$ that lies in $V_i$, and in particular in $O_i$.
Hence it lies in some element of $C_i$, and consequently also in some element of $C_i|_{V_i}$.

Finally, let $C$ be an open cover of $X$.
By Lemma \ref{lebesgue} there exists an open neighborhood $O$ of $K$ and a $\lambda>0$ such that every element of 
$B_\lambda(O)$ lies in some element of $C$.
Let $n$ be such that $O_{n-2}\subset O$ and $\lambda_{n-2}\le\lambda$.
Then every element of $C_{n-1}$ lies in some element of $C$.
Hence every element of $C_n|_{V_n}\cup (D_n\but D_{n-1})$ also lies in some element of $C$.
Let $L'$ be a subdivision of $L$ with new vertices only in $\Int |L_n|$ and such that the open star of every vertex
of $L_{n-1}'$ in $L'$ lies in come element of $C$.
(Hereafter $L_i'$ denotes the induced subdivision of $L_i$, which is a subcomplex of $L'$.)
Let $D'_i$ be the set of open stars of vertices of $L_i'$ in $L'$.
Then $D'_{n-1}$ refines $C$ and $D'_n\but D'_{n-1}$ refines $D_n\but D_{n-1}$.
Since $C_n|_{V_n}\cup (D_n\but D_{n-1})$ refines $C$, the open cover $E^C_n\bydef C_n|_{V_n}\cup D_n'$ of $X$ also
refines $C$.
On the other hand, since $(L',L_n')$ is a subdivision of $(L,L_n)$, the nerves of $D_n$ and $D_n'$ are homeomorphic.
Moreover, since the only new vertices of $L'$ are in $\Int L_n$, their open stars are disjoint from the elements
of $C_n|_{V_n}$, and it follows that $N_{E^C_n}$ is homeomorphic to $N_{E_n}$; let $h_C$ denote the homeomorphism.
Also, $E^C_n$ refines $E_n$, and the homotopy class of maps $N_{E^C_n}\to N_{E_n}$ given by this refinement clearly 
contains $h_C$.
Therefore this homotopy class is invertible, with inverse $[h_C]$.

Let us write $E_n=j(C)$ and $E^C_n=j'(C)$. 
Then $j$ is a map from the directed set $\Lambda$ of all open covers of $X$ to its totally ordered subset 
$E\bydef \{E_n\mid n\in\N\}$, and we can define a homotopy class $j_C\in [N_{j(C)},N_C]$ by composing 
$[h_C^{-1}]\in[N_{j(C)},N_{j'(C)}]$ with the homotopy class of maps $N_{j'(C)}\to N_C$ given by the refinement
of $C$ by $j'(C)=E^C_n$. 
Then $J\bydef (j,j_C)$ is an inv-morphism of the two inverse systems in the homotopy category.
Indeed, if $D$ refines $C$, then $j(D)=E_m$ for some $m$.
If, for example, $m\le n$, then it is clear that the following diagram commutes up to homotopy:
\[\begin{CD}
N_{E_m}@>h_D^{-1}>>N_{E^D_m}@>>>N_D\\
@AAA@AAA@|\\
N_{E_n}@>h_D^{-1}>>N_{E^D_n}@>>>N_D\\
@|@VVV@VVV\\
N_{E_n}@>h_C^{-1}>>N_{E^C_n}@>>>N_C.
\end{CD}\]
(For $J$ to be an inv-morphism, we need only the outer square of this diagram to be commutative.) 
The case $m\ge n$ is similar (and easier).

We also have the inclusion $i\:E\to\Lambda$ and the morphism $I\bydef (i,[\id])$ of the inverse systems.
Then $JI$ determines the same pro-morphism as the identity since it can be assumed that $j(E_n)=E_{n+2}$
and $j_{E_n}\in[N_{E_{n+2}},N_{E_n}]$ is given by refinement.
Finally, $IJ$ determines the same pro-morphism as the identity since $[h_C]$ is the inverse of a bonding map.
Thus $I$ is an isomorphism. 
\end{proof}

Theorem \ref{cech-cofinal} can also be deduced from Theorem \ref{main-poly} using that every resolution is an expansion
(see \cite{M00}*{Theorem \ref{book:exp-res}}) and every two expansions of the same space are pro-isomorphic
(see \cite{M00}*{Corollary \ref{book:expansion3}}).

On the other hand, as a byproduct of the proof of Theorem \ref{cech-cofinal} we obtain an alternative construction
of a sequential polyhedral resolution for a given coronated polyhedron.%
\footnote{Historically, this was the original construction (see version 1 of the arXiv preprint \cite{M-III}).}

\begin{corollary} \label{cech-cofinal'}
Every coronated polyhedron $X$ is the limit of a pre-compactohedral inverse sequence.
\end{corollary}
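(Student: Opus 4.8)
The plan is to bypass the mapping-telescope and simplicial-approximation machinery of Theorem~\ref{compactohedral} and instead read the desired sequence off the combinatorial data manufactured in the proof of Theorem~\ref{cech-cofinal}. (Logically the statement is already contained in Corollary~\ref{poly-main}, via (6)$\Rightarrow$(3); the content here is the explicit, alternative construction.) So I would fix a compactum $K\subset X$ with $P\bydef X\but K$ a polyhedron and a metric on $X$ bounded above by $1$, and then invoke the construction in the proof of Theorem~\ref{cech-cofinal} to obtain: shrinking open neighborhoods $X=U_0\supset O_0\supset U_1\supset\dots$ of $K$; finite open covers $C_n$ of $U_n$ with each $C_{n+1}$ refining $C_n$; and a triangulation $L$ of $P$ with nested subcomplexes $L_n$ satisfying $|L_n|\subset\Int|L_{n+1}|$, $\bigcup_n|L_n|=P$, with $V_n\bydef X\but|L_n|$ a neighborhood of $K$ inside $O_n$, and with every simplex of $L$ meeting $O_n$ of diameter $<\lambda_n-\lambda_{n+1}$.

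Next I would build $R_n$ by collapsing the neighborhood $V_n$ of $K$ onto the nerve of $C_n$ while leaving the far polyhedral piece $|L_n|$ intact. Concretely, let $K_n$ be the nerve of the finite cover $C_n$ (a compact polyhedron since $C_n$ is finite) and let $\kappa_n\:V_n\to K_n$ be the canonical map to the nerve; I would form $R_n$ by attaching $|L_n|$ to $K_n$ along the collar $\overline{V_n}\cap|L_n|$, realized as the mapping cylinder of $\kappa_n$ restricted to this collar. This makes $R_n$ a polyhedron in which $K_n$ is a compact subpolyhedron, and I would take the associated closed subpolyhedron $\hat L_n$ to be $K_n$ together with the cylinder, so that $K_n\subset\Int\hat L_n$ and $\overline{R_n\but\hat L_n}$ is identified with the closed far piece $\overline{P\but V_n}\subset|L_n|$. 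The bonding map $p_n\:R_{n+1}\to R_n$ would be the identity on the common far piece $\overline{P\but V_n}$, the canonical collapse into $K_n$ on the intermediate region lying over $V_n\but V_{n+1}$, and a fixed simplicial map $K_{n+1}\to K_n$ induced by $C_{n+1}\prec C_n$ on the compact part; the nesting $V_{n+1}\subset V_n$ and the mesh condition should make these fit together into a single polyhedral map.

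With the sequence in hand, checking that it is pre-compactohedral should be routine: (C0) and (C1) are clear from the construction (with $p_i(K_{i+1})\subset K_i$ since the bonding map is simplicial on nerves); (C2$''$) holds with the $\hat L_n$ above, since $\overline{V_{n+1}}\subset V_n$ forces the cylinder part of $\hat L_{n+1}$ to collapse into $K_n$; and (C3$''$) holds because $p_n$ restricts to the identity of the common far piece, hence to a homeomorphism $p_n^{-1}(\overline{R_n\but\hat L_n})\to\overline{R_n\but\hat L_n}$.

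The hard part will be identifying $\lim R_i$ with $X$, and here I expect to reproduce the final paragraph of the proof of Theorem~\ref{compactohedral}. I would assemble the canonical maps $r_n\:X\to R_n$ (the identity on the far part $|L_n|$ and $\kappa_n$ on $V_n$), check the compatibility $p_n r_{n+1}=r_n$, and obtain a continuous bijection $r\:X\to\lim R_i$ restricting to homeomorphisms on $P$ and on $K$. The only subtle point is continuity of $r^{-1}$ at a point $x\in K$: assuming $r(x_i)\to r(x)$ but $x_i\not\to x$, one first rules out infinitely many $x_i$ lying in a fixed $|L_m|$ (their images would stay in the closed set $\overline{P\but V_m}$, which is disjoint from $K_m\ni r_m(x)$), so all but finitely many $x_i$ lie in $V_m\subset O_m$; the mesh condition then forces $d(x_i,K)\to0$. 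Choosing $y_i\in K$ nearest to $x_i$, passing to a subsequence along which $x$ is not a cluster point, and using compactness of $K$, one finds a cluster point $y$ of the $y_i$; it is a cluster point of the $x_i$ as well, and $r(y)=r(x)$ with $r$ injective gives $y=x$, a contradiction. This mesh-driven argument is precisely where the construction of Theorem~\ref{cech-cofinal} is used, and it is the step I expect to require the most care.
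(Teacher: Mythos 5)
Your starting point coincides with the paper's: the statement is indeed meant to be read off the data constructed in the proof of Theorem~\ref{cech-cofinal} (the neighborhoods $U_n\supset O_n$, the finite covers $C_n$, the triangulation $L$ of $P$ with nested subcomplexes $L_n$, and $V_n=X\but|L_n|$). But the way you glue the compact part to the polyhedral part has a genuine gap. A pre-compactohedral sequence must satisfy (C0): each $R_i$ is a \emph{polyhedron} and each $p_i$ is a \emph{polyhedral} map. Your $R_n$ is built from the metric mapping cylinder of the canonical nerve map $\kappa_n$ restricted to the frontier $\overline{V_n}\cap|L_n|$; but $\kappa_n$, being defined via a partition of unity, is not PL, and the mapping cylinder of a non-PL map between polyhedra carries no natural polyhedral structure (under suitable hypotheses it is an ANR, but not a polyhedron) --- and even where the underlying space happens to be triangulable, the cylinder projection, which your bonding map uses on the region over $V_n\but V_{n+1}$, is not a polyhedral map. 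Repairing this forces you to simplicially approximate $\kappa_n$ and replace the metric cylinder by the simplicial one (conization, as in Lemma~\ref{pre-compactohedral} and Theorem~\ref{compactohedral}), i.e.\ exactly the machinery you announced you would bypass. There is also a secondary slip: the maps $r_n\:X\to R_n$ you describe (``identity on $|L_n|$, $\kappa_n$ on $V_n$'') are discontinuous at the frontier $\overline{V_n}\cap|L_n|$, since points of $V_n$ approaching it are sent into $K_n$ at the far end of the cylinder rather than near its top; one must interpolate through the cylinder coordinate, which your sketch does not provide.

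The paper's own proof avoids cylinders entirely, and that is its whole point: take $R_n$ to be the nerve $N_{E_n}$ of the full cover $E_n=C_n|_{V_n}\cup D_n$ from Theorem~\ref{cech-cofinal}. The polyhedral part enters through its open-star cover $D_n$, whose nerve is canonically the full subcomplex of $L$ spanned by the vertices of $L_n$; thus $R_n$ is a simplicial complex outright, the nerve of $C_n|_{V_n}$ is a compact subpolyhedron, and the bonding maps --- each vertex $U\in E_{n+1}$ goes to the barycenter of the simplex spanned by all elements of $E_n$ containing $U$, extended linearly --- are linear on simplices, hence polyhedral; on the far part they restrict to the identity, because the only element of $E_n$ containing an open star $\ost(v,L)$ with $v$ a vertex of $L_n$ is that star itself. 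The one modification the paper does make is to choose $C_n$ inside $B_{\lambda_n/2}(O_n)$, so that consecutive covers barycentrically refine one another; this is what makes the canonical bonding maps compatible with the canonical maps $X\to N_{E_n}$ and yields $\lim R_i=X$ and $\lim K_i=K$ directly, with no telescope argument needed. So if the goal is an explicit construction independent of Theorem~\ref{compactohedral}, the nerve-theoretic route is the one that works; your cylinder route, once made rigorous, collapses back into Theorem~\ref{compactohedral} itself.
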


\begin{proof} This follows from the proof of Theorem \ref{cech-cofinal} by considering the nerves 
of the constructed covers.
To ensure that $\lim R_i=X$ and $\lim K_i=K$ one needs to modify the construction by setting each $C_n$ to be 
a finite subset of $B_{\lambda_n/2}$ (rather than $B_{\lambda_n}$) that covers $K$.
This guarantees that each $C_{i+1}$ barycentrically refines $C_i$.
The bonding maps $N_{E_{n+1}}\to N_{E_n}$ are defined in the usual way, by sending each vertex $U\in E_{n+1}$
to the barycenter of the simplex $\{V_1,\dots V_k\}$, where $V_1,\dots,V_k$ are those elements of $E_n$
that contain $U$, and extending linearly.
\end{proof}

\subsection{A simple construction in a special case}

For a finite-dimensional coronated polyhedron $X=K\cup P$, where $K$ is a compactum and the polyhedron $P=X\but K$ 
is locally compact, we also note a different representation of $X$ as the limit of a compactohedral 
inverse sequence.
Indeed, by the proof of Proposition \ref{cor-lc}(c) we may identify $X$ with a subset of $S^n$ for some $n$ 
such that $P$ is a subpolyhedron of an open subset $V$ of $S^n\but K$; now the assertion follows from

\begin{theorem} Suppose that $X\subset S^n$ is a coronated polyhedron, $X=K\cup P$, where the polyhedron $P$
is a subpolyhedron of an open subset $V$ of $S^n\but K$.
Then $X$ is the limit of a compactohedral inverse sequence of the form $\dots\subset R_1\subset R_0$, 
where each $R_i$ is a subpolyhedron of an open subset of $S^n$.
\end{theorem}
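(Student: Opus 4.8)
\emph{The plan} is to realize $X$ as $\bigcap_i R_i$ for a decreasing sequence $R_i=K_i\cup P$, where $K_0\supset K_1\supset\cdots$ are compact polyhedral neighborhoods of $K$ in $S^n$ that shrink to $K$. First I would fix compact subpolyhedra $K_i$ of $S^n$ with $K\subset\Int K_{i+1}$, $K_{i+1}\subset\Int K_i$ and $\bigcap_i K_i=K$ (e.g.\ nested regular neighborhoods of $K$), set $R_i=K_i\cup P$, take every bonding map to be the inclusion $R_{i+1}\subset R_i$, and declare the $K_i$ to be the associated compact subpolyhedra. Since each point of $P$ lies in every $R_i$, the identity $\bigcap_i(K_i\cup P)=(\bigcap_i K_i)\cup P$ gives $\bigcap_i R_i=K\cup P=X$ and $\bigcap_i K_i=K$, so the limit of the sequence is $X$ with $\lim K_i=K$.

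Next I would verify the combinatorial axioms, which for inclusion bonding maps are purely set-theoretic. Condition (C1) is immediate from $K_{i+1}\subset K_i$. For (C3), note that $R_i\setminus K_i=(K_i\cup P)\setminus K_i=P\setminus K_i\subset P\subset R_{i+1}$, so the inclusion $p_i$ restricts to the identity homeomorphism of $R_i\setminus K_i=p_i^{-1}(R_i\setminus K_i)$. For (C2), given $x\in K_{i+1}\subset\Int K_i$, a small $S^n$-neighborhood of $x$ lies in $K_i$, and intersecting with $R_{i+1}$ shows $x\in\Int_{R_{i+1}}(K_i\cap R_{i+1})=\Int p_i^{-1}(K_i)$. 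Thus (C1)--(C3) hold, and it remains to establish (C0): that each $R_i$ is a polyhedron, realized as a subpolyhedron of an open subset of $S^n$, with the inclusions polyhedral.

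For (C0) I would equip $R_i$ with a polyhedral structure from three open charts of $S^n$. On $\Int K_i$ one has $R_i\cap\Int K_i=\Int K_i$, an open polyhedron; on $V'\bydef S^n\setminus\bar P$ one has $R_i\cap V'=K_i\cap V'$; and on $V$ one has $R_i\cap V=(K_i\cap V)\cup P$. The first two are open pieces of the subpolyhedron $K_i$, while on $V$ the set $R_i\cap V$ is the union of the subpolyhedra $K_i\cap V$ (a subpolyhedron of $V$ by \cite{M00}*{Corollary \ref{book:opensubset2}}) and $P$ (by hypothesis), hence again a subpolyhedron of $V$. The transition maps are identities on shared open subpolyhedra, so by \cite{M00}*{Lemma \ref{book:atlas5}} these structures assemble to a polyhedron structure on $R_i$, exhibiting $R_i$ as a subpolyhedron of the open set $W_i\bydef\Int K_i\cup V\cup V'$; here $R_i$ is closed in $W_i$ because $F\cap V=F\cap V'=\emptyset$ and $F\cap\Int K_i\subset R_i$, writing $F\bydef\bar P\setminus P$. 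The inclusions $R_{i+1}\subset R_i$ and $K_i\subset R_i$ are then polyhedral by \cite{M00}*{Lemma \ref{book:homeo-restriction}}, and $K_i$ is a compact subpolyhedron of $R_i$.

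\emph{The main obstacle} lies in the one point above that is not automatic: the three charts cover $R_i$ (equivalently $R_i\subset W_i$) only if each frontier $\partial K_i$ meets $\bar P$ inside $P$ alone, i.e.\ $\partial K_i\cap F=\emptyset$. The part $F\cap K$ is harmless, since it sits in $\Int K_i$; the trouble is the set of escaping ends $E\bydef\bar P\setminus X=F\setminus K$, which in general clusters onto $K$ (this already happens for a null-sequence of rays, whose far ends accumulate at the single point of $K$). So I cannot take metric neighborhoods of $K$ naively. Instead I would fix a PL function $\rho\:S^n\to[0,\infty)$ with $\rho^{-1}(0)$ a neighborhood of $K$ and put $K_i=\rho^{-1}\bigl([0,t_i]\bigr)$ for thresholds $t_i\downarrow0$ chosen to be regular values avoiding the compact set $\rho(F)\subset[0,\infty)$; then $\partial K_i=\rho^{-1}(t_i)$ is a subpolyhedron disjoint from $F$, and the $K_i$ are nested with $\bigcap_i K_i=K$. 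The substance of the proof is producing such a $\rho$ for which admissible thresholds exist at arbitrarily small scales -- that is, separating $K$ from the ends of $P$ by polyhedral level sets at every scale -- which rests on the compactness of $F$ and on $P$ being closed in $V$. This transversality/regular-value step is where the real work is concentrated; the remaining verifications are the routine bookkeeping above.
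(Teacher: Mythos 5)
Your reduction is correct as far as it goes: with $R_i\bydef K_i\cup P$ and inclusions as bonding maps, the verifications of (C1)--(C3) are indeed routine, and your chart analysis correctly shows that everything hinges on arranging $\partial K_i\cap F=\emptyset$, where $F\bydef\bar P\but P$. But that is precisely where the proposal breaks down, and not for lack of a clever transversality argument: the required $K_i$ simply do not exist in general. Take the paper's own Example~\ref{resolution example}: $P=\{\frac1n\mid n\in\N\}\x[-1,1]$ and $K=\{0\}\x\{-1,1\}$ in $\R^2\subset S^2$, with $V=\R^2\but\{0\}\x[-1,1]$. Here $F=\{0\}\x[-1,1]$ is connected, contains $K$, and contains the origin, which is at definite distance from $K$. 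Since $\bigcap_i R_i=X$ forces $\bigcap_i K_i\subset X$, eventually $(0,0)\notin K_i$, so $F\not\subset K_i$; then $F$ is a connected set meeting both $\Int K_i$ and $S^2\but K_i$, hence it meets the frontier $\partial K_i$. Thus $\partial K_i\cap F\ne\emptyset$ for every admissible choice of neighborhoods, PL or not. In your $\rho$-formulation the obstruction reads: $\rho(F)$ is a connected subset of $[0,\infty)$ containing $0$ as well as positive values, hence it contains a whole interval $[0,\eps]$, so there are no thresholds $t_i\downarrow 0$ avoiding $\rho(F)$; ``regular values'' are of no help because $F$ is not a lower-dimensional polyhedron that one can be transverse to, but an arbitrary compactum that your level sets are forced to cross. (There is also a separate defect: with $\rho^{-1}(0)$ a fixed neighborhood of $K$ one gets $\bigcap_i K_i=\rho^{-1}(0)\ne K$, so $\bigcap_i R_i\ne X$; and $\rho^{-1}(0)=K$ is impossible for PL $\rho$ since $K$ need not be polyhedral.)

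The paper's proof turns the difficulty around: instead of forcing $\partial K_i$ to miss $F$, it deletes the offending points from $R_i$. With $K_i$ nested compact polyhedral (PL manifold) neighborhoods of $K$, $K_{i+1}\subset\Int K_i$, $\bigcap_i K_i=K$, it sets $R_i\bydef P\cup(K_i\cap V)\cup\Int K_i$, so that only the part of $\partial K_i$ lying in $V$ is retained. The PL content of the proof is then to triangulate $P$ and $(\Cl(S^n\but K_i)\cap V,\,\partial K_i\cap V)$ compatibly inside $V$, to triangulate $(U_i\cap K_i,\,U_i\cap\partial K_i)$ inside $U_i\bydef V\cup\Int K_i$, and to match the two along $\partial K_i\cap V$ by a common subdivision, exhibiting $R_i$ as a closed subpolyhedron of the open set $U_i$ --- note that the ambient open set shrinks with $i$, exactly so as to exclude $\partial K_i\but V$. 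Compactohedrality is then derived from the sandwich $K_{i+1}\cup P\subset R_i\subset K_i\cup P$. Be aware that your bookkeeping does not transfer verbatim to these modified $R_i$: now $K_i\not\subset R_i$ (the deleted frontier points belong to $K_i$), so the associated compact subpolyhedra must be chosen inside $R_i$ and conditions (C1)--(C3) rechecked from the sandwich property; this verification is genuinely more delicate than the one you gave, and it is exactly the price paid for making (C0) achievable.
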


\begin{proof} Let $\dots K_1\subset K_0$ be a nested sequence of closed polyhedral neighborhoods of $K$ in $S^n$ 
such that each $K_i$ is a PL manifold with boundary, each $K_{i+1}\subset\Int K_i$ and $\bigcap_i K_i=K$.
We will show that $R_i\bydef P\cup (K_i\cap V)\cup\Int K_i$ is a subpolyhedron of the open subset $U_i\bydef V\cup\Int K_i$ 
of $S^n$ (hence, in particular, a polyhedron).
See Example \ref{resolution example} below for an illustration.

Let $M_i=\Cl(S^n\but K_i)$.
Since $(M_i,\partial M_i)$ is triangulated by a pair of subcomplexes of some triangulation of $S^n$, it is easy 
to see that $(M_i\cap V,\,\partial M_i\cap V)$ is triangulated by a pair of subcomplexes of some triangulation 
$T_V$ of $V$.
On the other hand, $P$ is also triangulated by a subcomplex of some triangulation $T'_V$ of $V$.
Then $P\cap M_i$ and $(M_i\cap V,\,\partial M_i\cap V)$ are triangulated respectively by a subcomplex $C_i$
and a pair of subcomplexes $(A_i,\partial A_i)$ of a common subdivision of $T_V$ and $T'_V$.

Next, $(U_i\cap K_i,\,U_i\cap\partial K_i)$ is triangulated by a pair of subcomplexes $(B_i,\partial B_i)$ of 
some triangulation of $U_i$.
Now $U_i\cap\partial K_i=V\cap\partial M_i$ is triangulated by $\partial A_i$ and by $\partial B_i$, and hence 
also by their common subdivision $\partial A_i'=\partial B_i'$, which extends to subdivisions $A'_i$ of $A_i$ 
and $B'_i$ of $B_i$.
Therefore $A'_i\cup B'_i$ is a triangulation of $U_i=(U_i\cap K_i)\cup (V\cap M_i)$ such that
its subcomplex $C'_i\cup B'_i$ triangulates $R_i=(U_i\cap K_i)\cup(P\cap M_i)$.

Since $K_{i+1}\cup P\subset R_i\subset K_i\cup P$, the inverse sequence $\dots\subset R_1\subset R_0$ is 
compactohedral.
Clearly $\bigcap_{i\in\N} R_i=X$.
\end{proof}

\begin{example} \label{resolution example}
Let $X=K\cup P\subset\R^2$, where $P=\{\frac1n\mid n\in\N\}\x[-1,1]$ and $K=\{0\}\x\{-1,1\}$.
Then $P$ is a subpolyhedron of the open set $V\bydef \R^2\but\{0\}\x[-1,1]$.
Let $K_n=[-\frac1n,\frac1n]\x\big([-1-\frac1n,\,-1+\frac1n]\cup[1-\frac1n,1+\frac1n]\big)$, 
the union of two closed $l_\infty$-balls of radius $\frac1n$ about the two points of $K$.
Then the subpolyhedra $R_n\bydef P\cup K_n\but\{0\}\x\{-1+\frac1n,1-\frac1n\}$ of
the open sets $U_n\bydef \R^2\but\{0\}\x[-1+\frac1n,1-\frac1n]$ form a compactohedral 
inverse sequence with $\bigcap_{i\in\N} R_i=X$.
\end{example}

\section{Homology} \label{homology}

\subsection{Homology of the extended telescope}

\begin{lemma}\label{two-five}
Let $X$ be the limit of a pre-compactohedral inverse sequence of polyhedra 
$\dots\xr{p_2} R_2\xr{p_1} R_1$, and let $H_*$ (resp.\ $H^*$) be a generalized (co)homology theory
satisfying map excision on closed pairs of coronated polyhedra.
Then the map of pairs given by the metric quotient map $R_{[0,\infty]}\to R_{[0,\infty)}^\Star$
induces isomorphisms 
\begin{gather*}
H_n(R_{[0,\infty]},\,X\cup R_0)\to H_n(R_{[0,\infty)}^\Star,\,\STAR\cup R_0),\\
H^n(R_{[0,\infty]},\,X\cup R_0)\to H^n(R_{[0,\infty)}^\Star,\,\STAR\cup R_0).
\end{gather*}
\end{lemma}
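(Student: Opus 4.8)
The plan is to apply the map excision axiom to the metric quotient map $q\:R_{[0,\infty]}\to R_{[0,\infty)}^\Star$ itself, viewed as a map of pairs $q\:(R_{[0,\infty]},\,X\cup R_0)\to(R_{[0,\infty)}^\Star,\,\STAR\cup R_0)$. By construction (see Example \ref{telescope-quotient}) $q$ collapses $X=\lim R_i$ to the point $\STAR$ and is injective elsewhere; since $R_0$ sits at level $0$ of the telescope it is disjoint from $X$, so $q(X\cup R_0)=\STAR\cup R_0$ while $q^{-1}(\STAR\cup R_0)=X\cup R_0$. Deleting these subspaces, $q$ restricts to the identity homeomorphism between $R_{[0,\infty]}\but(X\cup R_0)$ and $R_{[0,\infty)}^\Star\but(\STAR\cup R_0)$, each being the infinite telescope with its bottom $R_0$ removed. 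Thus $q$ is a relative homeomorphism, and as it collapses the closed set $X$ it is moreover a closed map. This is exactly the situation to which map excision applies, and it does so simultaneously for $H_*$ and $H^*$.

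Before invoking the axiom I must verify that the two subspaces are closed and that all four spaces are coronated polyhedra. The sets $X$, $\STAR$, and $R_0$ are closed, so $X\cup R_0$ and $\STAR\cup R_0$ are closed. That $R_{[0,\infty)}^\Star$ is a coronated polyhedron is Example \ref{telescope-quotient2}, since a pre-compactohedral sequence is a resolution (Theorem \ref{resolution theorem}), hence a semi-resolution of polyhedra and polyhedral maps. By Lemma \ref{pre-poly} the limit $X$ is a coronated polyhedron, with compactum $K=\lim K_i$ and $X\but K$ a polyhedron; since $R_0$ is a polyhedron disjoint from $X$, the unions $X\cup R_0$ and $\STAR\cup R_0$ are coronated polyhedra as well.

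The remaining point, which I expect to be the main obstacle, is that the extended telescope $R_{[0,\infty]}$ is itself a coronated polyhedron, again with compactum $K=\lim K_i$. Here $R_{[0,\infty]}\but K$ is the union of the polyhedron $R_{[0,\infty)}$ with $X\but K$, meeting at the end of the telescope, and I would show it is a polyhedron by the atlas argument used in Lemma \ref{pre-poly}: the finite telescopes $R_{[0,n]}$ form an exhausting family of open polyhedra, while near $X\but K$ conditions (C2$'$) and (C3$'$) force the bonding maps $p_i$ to be homeomorphisms off the closed cores $L_i$, so the tail of the telescope over the sets $R_i\but L_i$ stabilizes and glues polyhedrally onto $X\but K$. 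Since the $p_i$ restrict there to polyhedral homeomorphisms, these charts are compatible, and \cite{M00}*{Lemma \ref{book:atlas5}} assembles them into a single polyhedral structure on $R_{[0,\infty]}\but K$. With this in hand, the map excision axiom (\cite{M00}*{\S\ref{book:map excision}}) applied to $q$ delivers both asserted isomorphisms at once. The delicate part is controlling the polyhedral structure of $R_{[0,\infty]}$ uniformly as one approaches the limit $X$, which is precisely where the pre-compactohedral hypotheses are needed.
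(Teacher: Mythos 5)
Your argument reduces the whole lemma to a single application of the map excision axiom, namely to the quotient map $q\:(R_{[0,\infty]},\,X\cup R_0)\to(R_{[0,\infty)}^\Star,\,\STAR\cup R_0)$, justified by your claim that $q$, ``as it collapses the closed set $X$, is moreover a closed map.'' That claim is false whenever $X$ is non-compact (which is the only case of interest). Choose points $x_n\in X$ with no convergent subsequence, and let $c_n$ be the ``shadow'' of $x_n$ at level $n$ of the telescope, i.e.\ the point $p^\infty_n(x_n)\in R_n\subset R_{[0,\infty]}$. For every $m\le n$ the images of $c_n$ and $x_n$ in the finite telescope $R_{[0,m]}$ coincide, so in the inverse-limit metric $d(c_n,x_n)\le 2^{-n}\to 0$; on the other hand $C=\{c_1,c_2,\dots\}$ is closed in $R_{[0,\infty]}$, since any limit point of $C$ would lie at level $\infty$, i.e.\ in $X$, and would force a convergent subsequence of $(x_n)$. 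Hence $q(C)$ accumulates at $\STAR$ without containing it, and $q$ is not a closed map --- the standard failure of the quotient by a \emph{non-compact} closed set. This is not a repairable slip: every use of map excision that the paper permits itself (and every form of the axiom that the relevant theories actually satisfy) involves a closed map --- either an inclusion of closed pairs, as in $\bigl(\Cl_{R_i}(R_i\but L_i),\,\Fr_{R_i}(R_i\but L_i)\bigr)\subset(R_i,L_i)$, or a quotient map collapsing a \emph{compactum}, as in $K_{[0,\infty]}\to K_{[0,\infty)}^\Star$ and $X\to X/K$. Asserting that the collapse of the non-compact set $X$ is itself an instance of the axiom begs the question: that this particular non-compact collapse induces isomorphisms is precisely the content of Lemma \ref{two-five}, so your only substantive step assumes what is to be proved.

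The paper's proof shows what it actually takes: a five-lemma argument reducing to the pair modulo the compact telescope $K_{[0,\infty]}$ (where the compactum $K=\lim K_i$ is collapsed by honest map excision), the fiberwise deformation retraction of $L_{[0,\infty]}$ onto $K_{[0,\infty]}$, the homeomorphisms supplied by (C3$''$) identifying $\bigl(\Cl_{R_i}(R_i\but L_i),\Fr_{R_i}(R_i\but L_i)\bigr)$ with pairs inside $X$, the compact collapse $X\to X^\Bullet=X/K$, a factorization of the remaining comparison map into an inclusion-excision $g$ followed by a homotopy equivalence $h$, and a second five-lemma argument. Note that in this scheme the pre-compactohedral data $K_i\subset L_i$ do the homological work of splitting the problem into a compact part and a part where $q$ is injective; in your proposal those data are used only to verify that $R_{[0,\infty]}$ is a coronated polyhedron (a point that does need checking, and your sketch of it is reasonable), while the homological heart of the lemma is left unproved.
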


See Example \ref{telescope-quotient} concerning $R_{[0,\infty)}^\Star$ and $\STAR$.

\begin{proof} We prove only the isomorphism in homology, the case of cohomology being similar.
Let $K_i\subset L_i\subset R_i$ be the associated compact and closed subpolyhedra.

The map of triples
\[(R_{[0,\infty]},\,K_{[0,\infty]}\cup X\cup R_0,\,X\cup R_0)\to
(R_{[0,\infty)}^\Star,\,K_{[0,\infty)}^\Star\cup R_0,\,\STAR\cup R_0)\] 
given by metric quotient map $R_{[0,\infty]}\to R_{[0,\infty)}^\Star$ 
yields the following commutative diagram with exact columns:
\[\begin{CD}
\vdots@.\vdots\\
@VVV@VVV\\
H_i(K_{[0,\infty]},\,K\cup K_0)@>q_K>>H_i(K_{[0,\infty)}^\Star,\,\STAR\cup K_0)\\
@VVV@VVV\\
H_i(R_{[0,\infty]},\,X\cup R_0)@>q_R>>H_i(R_{[0,\infty)}^\Star,\,\STAR\cup R_0)\\
@VVV@VVV\\
H_i(R_{[0,\infty]},\,K_{[0,\infty]}\cup X\cup R_0)@>q_{(R,K)}>>
H_i(R_{[0,\infty)}^\Star,\,K_{[0,\infty)}^\Star\cup R_0)\\
@VVV@VVV\\
\vdots@.\vdots\\
\end{CD}\]
By the map excision axiom, $q_K$ is an isomorphism.
Then by the five-lemma, to show that $q_R$ is an isomorphism it suffices to show that $q_{(R,K)}$ is an isomorphism.

Let us recall that each $p_i(L_{i+1})\subset K_i\subset L_i$.
Let $L_{[0,\infty]}$ be the extended mapping telescope of $\dots\xr{p_1|_{L_1}}L_1\xr{p_0|_{L_0}}L_0$, identified 
with the corresponding subset of $R_{[0,\infty]}$; clearly, $L_\infty=L_{[0,\infty]}\cap X=K$.
The mapping cylinder of each $p_i|_{L_{i+1}}\:L_{i+1}\to K_i$ collapses onto the mapping cylinder of its 
restriction $p_i|_{K_{i+1}}\:K_{i+1}\to K_i$ (even though $L_{i+1}$ does not necessarily collapse onto $K_{i+1}$).
These collapses performed simultaneously yield a deformation retraction of $L_{[0,\infty]}$ onto $K_{[0,\infty]}$,
which also qualifies as a deformation retraction $r_t$ of $L_{[0,\infty]}\cup X\cup R_0$ onto 
$K_{[0,\infty]}\cup X\cup R_0$.
We may assume that $r_t$ is a strong deformation retraction (i.e.\ keeps $K_{[0,\infty]}\cup X\cup R_0$ fixed), and 
for each $x\in L_{[0,\infty]}$ and each $t\in I$ we have $d\big(r_t(x),X\big)\le 2d(x,X)$.
Since $R_{[0,\infty]}$ is an ANR (see \cite{M00}*{Theorem \ref{book:extended telescope}(b)}), $r_t$ extends to 
a homotopy $\bar r_t$ of $R_{[0,\infty]}$ in itself, which keeps $K_{[0,\infty]}\cup X\cup R_0$ fixed and satisfies
$d\big(r_t(x),X\big)\le 3d(x,X)$ for each $x\in R_{[0,\infty]}$ and each $t\in I$.
Since $\bar r_t$ keeps $X$ fixed, it descends to a homotopy of the metric quotient $R_{[0,\infty)}^\Star$ in itself;
indeed, the latter is continuous at $\STAR$ (in the topology of the metric quotient) due to the distance estimate
for $\bar r_t$. 
It follows that the pair $(R_{[0,\infty]},\,K_{[0,\infty]}\cup X\cup R_0)$ is homotopy equivalent to 
$(R_{[0,\infty]},\,L_{[0,\infty]}\cup X\cup R_0)$, and the pair 
$(R_{[0,\infty)}^\Star,\,K_{[0,\infty)}^\Star\cup R_0)$ is homotopy equivalent to
$(R_{[0,\infty)}^\Star,\,L_{[0,\infty)}^\Star\cup R_0)$.

Each pair $(R_i,L_i)$ is map-excision-equivalent to $\big(\Cl_{R_i}(R_i\but L_i),\,\Fr_{R_i}(R_i\but L_i)\big)$.
(Here map excision can be replaced by usual excision along with homotopy invariance.)
It follows from the definition of a pre-compactohedral inverse sequence that the latter pair 
is homeomorphic to 
$\Big(\Cl_X\big((p^\infty_i)^{-1}(R_i\but L_i)\big),\,\Fr_X\big((p^\infty_i)^{-1}(R_i\but L_i)\big)\Big)$, 
which is in turn map-excision-equivalent to $(X,\,M_i)$, where $M_i=\Cl_X\big((p^\infty_i)^{-1}(L_i)\big)$.
Finally, the latter pair is map-excision-equivalent to $(X/K,\,M_i/K)$, which will denote by $(X^\Bullet,M_i^\Bullet)$ 
for brevity.
Writing $M^\Bullet_{[0,\infty]}$ for the mapping telescope, we get a commutative diagram
\[\scalebox{0.8}{$\begin{CD}
H_i(R_{[0,\infty]},\,K_{[0,\infty]}\cup X\cup R_0)@>q_{(R,K)}>>
H_i\big(R_{[0,\infty)}^\Star,\,K_{[0,\infty)}^\Star\cup R_0\big)\\
@V\simeq VV@V\simeq VV\\
H_i(R_{[0,\infty]},\,L_{[0,\infty]}\cup X\cup R_0)@>q_{(R,Q)}>>
H_i\big(R_{[0,\infty)}^\Star,\,L_{[0,\infty)}^\Star\cup R_0\big)\\
@V\simeq VV@V\simeq VV\\
H_i(X\x[0,\infty],\,M_{[0,\infty]}\cup X\x\{0,\infty\})@>q_{(X,M)}>>
H_i\big(X\x[0,\infty]/X\x\{\infty\},\,M_{[0,\infty)}^\Star\x\{\infty\}\cup X\x\{0\}\big)\\
@V\simeq VV@V\simeq VV\\
H_i(X^\Bullet\x[0,\infty],\,M^\Bullet_{[0,\infty]}\cup X^\Bullet\x\{0,\infty\})@>q_{(X^\Bullet,M^\Bullet)}>>
H_i(X^\Bullet\x[0,\infty]/X^\Bullet\x\{\infty\},\,M^\Bullet_{[0,\infty]}\cup X^\Bullet\x\{0\}),
\end{CD}$}\]
where the vertical arrows in the top square are induced by the inclusions of pairs, which are homotopy equivalences;
those in the middle square are each composed of two map excision isomorphisms;
and those in the bottom square are map excision isomorphisms.
Thus showing that $q_{(R,K)}$ is an isomorphism amounts to showing that $q_{(X^\Bullet,M^\Bullet)}$ is an isomorphism.
Finally, the map of triples
\begin{multline*}
(X^\Bullet\x[0,\infty],\,M^\Bullet_{[0,\infty]}\cup X^\Bullet\x\{0,\infty\},\,X^\Bullet\x\{0,\infty\})\\ \to
(X^\Bullet\x[0,\infty]/X^\Bullet\x\{\infty\},\,M^\Bullet_{[0,\infty]}\cup X^\Bullet\x\{0\},\,\{X^\Bullet\x\{\infty\}\}\cup X^\Bullet\x\{0\})
\end{multline*} 
given by the metric quotient map $X^\Bullet\x[0,\infty]\to X^\Bullet\x[0,\infty]/X^\Bullet\x\{\infty\}$
yields the following commutative diagram with exact columns:
\[\scalebox{0.8}{$\begin{CD}
\vdots@.\vdots\\
@VVV@VVV\\
H_i(M^\Bullet_{[0,\infty]},\,\{K\}\x\{\infty\}\cup M_0)@>q_{M^\Bullet}>>
H_i(M^\Bullet_{[0,\infty]},\,\{X^\Bullet\x\{\infty\}\}\cup M_0)\\
@VVV@VVV\\
H_i(X^\Bullet\x[0,\infty],\,X^\Bullet\x\{0,\infty\})@>q_{X^\Bullet}>>
H_i(X^\Bullet\x[0,\infty]/X^\Bullet\x\{\infty\},\,\{X^\Bullet\x\{\infty\}\}\cup X^\Bullet\x\{0\})\\
@VVV@VVV\\
H_i(X^\Bullet\x[0,\infty],\,M^\Bullet_{[0,\infty]}\cup X^\Bullet\x\{0,\infty\})@>q_{(X^\Bullet,M^\Bullet)}>>
H_i(X^\Bullet\x[0,\infty]/X^\Bullet\x\{\infty\},\,M^\Bullet_{[0,\infty]}\cup X^\Bullet\x\{0\})\\
@VVV@VVV\\
\vdots@.\vdots
\end{CD}$}\]
Here $q_M$ is induced by a homeomorphism, and so is an isomorphism.
To see that $q_{X^\Bullet}$ is an isomorphism, we let us represent the underlying map of pairs in different 
notation, as a map
$(X^\Bullet\x[0,1],\,X^\Bullet\x\{0,1\})\to (X^\Bullet\x[0,2]/X^\Bullet\x\{2\},\,\{X^\Bullet\x\{2\}\}\cup X^\Bullet\x\{0\})$.
The latter factors into the composition
\begin{multline*}
(X^\Bullet\x[0,1],\,X^\Bullet\x\{0,1\})\xr{g}\big(X^\Bullet\x[0,2]/X^\Bullet\x\{2\},\,(X^\Bullet\x[1,2]/X^\Bullet\x\{2\})\cup X^\Bullet\x\{0\}\big)\\
\xr{h}(X^\Bullet\x[0,2]/X^\Bullet\x\{2\},\,\{X^\Bullet\x\{2\}\}\cup X^\Bullet\x\{0\}),
\end{multline*}
where $g$ is an inclusion of pairs 
and induces an isomorphism on homology by map excision (or alternatively by usual excision along with homotopy invariance), 
and $h$ is a homotopy equivalence of pairs
(since it is homotopic to the identity keeping $X^\Bullet\x\{0\}$ and the point $\{X^\Bullet\x\{2\}\}$ fixed), and 
hence induces an isomorphism on homology by the homotopy axiom.
Thus $q_{(X^\Bullet,M^\Bullet)}$ is an isomorphism by the five-lemma.
\end{proof}

\subsection{Strong shape invariance}

Theorem \ref{invariance} is a consequence of the following

\begin{theorem}\label{invariance2}
Let $H$ be a generalized (co)homology theory satisfying map excision 
on closed pairs of coronated polyhedra.
Then $H$ is an invariant of strong shape on coronated polyhedra, in the sense that its restriction 
to single spaces (i.e.\ pairs of the form $(X,\emptyset)$), regarded as a covariant (contravariant) functor 
from the homotopy category of coronated polyhedra to the category of graded abelian groups, 
factors through the strong shape category.
\end{theorem}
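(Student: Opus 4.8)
The plan is to compute $H$ of a coronated polyhedron directly from a sequential polyhedral resolution by means of Lemma \ref{two-five}, and then to use that strong shape morphisms between coronated polyhedra are precisely the coherent morphisms of such resolutions, which the extended mapping telescope realizes as honest maps up to homotopy. Concretely, I will construct a functor $\bar H$ from the strong shape category of coronated polyhedra to graded abelian groups, together with a natural isomorphism $\bar H(X)\cong H(X)$ satisfying $\bar H\circ\sigma=H$ on honest maps, where $\sigma$ denotes the canonical functor from the homotopy category to the strong shape category; the value $\bar H(F)$ on a strong shape morphism $F$ will be read off from a telescope realization of $F$ via Lemma \ref{two-five}.

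First I would fix, for each coronated polyhedron $X$, a pre-compactohedral inverse sequence $\dots\to R_2\to R_1$ with $\lim R_i=X$ (Theorem \ref{compactohedral} and Corollary \ref{poly-main}); by Theorem \ref{resolution theorem} it is a resolution, hence a (strong) expansion. Prepending a point if necessary, I may assume $R_0=pt$, so that the extended telescope $R_{[0,\infty]}$ deformation retracts onto $R_0$ through its finite stages (\cite{M00}*{Proposition \ref{book:telescope retraction}}) and is therefore contractible. The long exact sequence of the pair $(R_{[0,\infty]},\,X\cup R_0)$ then gives a natural isomorphism $H_n(R_{[0,\infty]},\,X\cup R_0)\cong H_{n-1}(X)$, and combining it with the isomorphism of Lemma \ref{two-five} I obtain a natural identification of $H_*(X)$ with $H_{*+1}(R_{[0,\infty)}^\Star,\,\STAR\cup R_0)$. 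The essential point is that the right-hand side is a homotopy invariant of the pinched telescope pair, and hence a functor, up to homotopy, of the inverse sequence alone.

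Next I would invoke the telescope model of strong shape (cf.\ \cite{MS}): since the chosen resolutions are expansions, and every two expansions of a space are pro-isomorphic (\cite{M00}*{Theorem \ref{book:exp-res}, Corollary \ref{book:expansion3}}), a strong shape morphism $F\colon X\to Y$ is represented by a homotopy-coherent map of the towers $\{R_i\}\to\{S_j\}$, which the extended-telescope construction realizes as a continuous map $R_{[0,\infty]}\to S_{[0,\infty]}$ carrying the corona $X=R_\infty$ into $Y=S_\infty$, well-defined up to homotopy. Collapsing both coronas to the points $\STAR$, this map descends to a map of pairs $(R_{[0,\infty)}^\Star,\,\STAR\cup R_0)\to(S_{[0,\infty)}^\Star,\,\STAR\cup S_0)$, since after pinching only the condition that the corona go to the corona survives, while contractibility of $R_0$ and $S_0$ renders the behaviour at the base end irrelevant. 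Composing with the identifications of the preceding paragraph defines $\bar H(F)\colon H_*(X)\to H_*(Y)$; functoriality and the equality $\bar H(\sigma f)=f_*$ for an honest map $f$ (whose telescope realization is homotopic to one literally extending $f$ across the corona) are inherited from the telescope model, and the cohomological case is entirely dual.

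The principal obstacle is the realization step in this non-compact setting. In contrast to the classical case of compacta, the telescopes here are non-compact metric spaces, so one must ensure that a coherent map of towers is realized by a telescope map that stays continuous at the corona; the distance estimates used in the proof of Lemma \ref{two-five} are exactly what make the descent to the pinched telescope continuous at $\STAR$. Closely tied to this is the need to check independence of the chosen sequential resolution: any two are coherently pro-isomorphic, and one must verify that the resulting isomorphisms of pinched-telescope homology are compatible with the identifications of $H_*(X)$, so that $\bar H$ is well-defined on both objects and morphisms. I expect these two points---continuity of the realization at the corona, and naturality under change of resolution---to carry the real weight of the argument, the homological bookkeeping being already supplied by the five-lemma computations of Lemma \ref{two-five}.
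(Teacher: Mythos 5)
Your proposal follows the same architecture as the paper's proof: pre\nobreakdash-compactohedral resolutions (Theorems \ref{compactohedral} and \ref{resolution theorem}), the identification $H_n(R_{[0,\infty]},\,X\cup R_0)\simeq H_{n-1}(X)$ coming from contractibility of the telescope and an exact sequence, Lemma \ref{two-five} to pass to the pinched telescope, and the telescope description of strong shape morphisms. But your realization step is wrong as stated, and it is the crux of the whole argument. You assert that a strong shape morphism $F\:X\to Y$ is realized by a continuous map $R_{[0,\infty]}\to S_{[0,\infty]}$ of the \emph{extended} telescopes carrying the corona $X=R_\infty$ into $Y=S_\infty$, and later that one ``must ensure that a coherent map of towers is realized by a telescope map that stays continuous at the corona.'' That is precisely what cannot be ensured: such a map restricts on the coronas to an honest continuous map $\phi\:X\to Y$ representing $F$, so extended-telescope realizations exist exactly for those strong shape morphisms that are induced by continuous maps --- this is the distinction drawn in the last sentence of the paper's own proof. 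It fails already for compacta, which are coronated polyhedra: the Warsaw circle $W$ has the strong shape of $S^1$, yet $W$ is simply connected, so every continuous map $S^1\to W$ is null-homotopic; hence the strong shape equivalence $S^1\to W$ (which is nontrivial, as it induces an isomorphism on $\check H^1\cong\Z$) is induced by no continuous map and admits no extended-telescope realization carrying corona into corona. If your realization claim were true, every strong shape morphism between coronated polyhedra would come from an honest map, and the theorem would be nearly contentless.

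What is true, and what the paper uses (citing \cite{M00}*{Remark \ref{book:telescope0}}), is weaker: $F$ is represented by a map $f\:(R_{[0,\infty)},R_0)\to(Q_{[0,\infty)},Q_0)$ of the \emph{open} telescopes which extends continuously to the \emph{pinched} telescopes, $\bar f\:(R_{[0,\infty)}^\Star,\,\STAR\cup R_0)\to(Q_{[0,\infty)}^\Star,\,\STAR\cup Q_0)$; continuity is demanded only after the coronas have been collapsed to points, not at the coronas themselves. Since everything downstream in your argument uses only the pinched-telescope map (Lemma \ref{two-five}, the exact-sequence identifications, homotopy-uniqueness of representatives), your construction of $\bar H$ is repaired by substituting this representation for your realization claim. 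Note also that your attribution of the continuity at $\STAR$ to ``the distance estimates used in the proof of Lemma \ref{two-five}'' is misplaced: inside that proof the estimates serve only to descend a certain extended deformation retraction to the pinched telescope; the continuity of $\bar f$ at $\STAR$ is part of the cited telescope model of strong shape morphisms, an input to the proof of Theorem \ref{invariance2} rather than a consequence of Lemma \ref{two-five}.
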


The compact case of Theorem \ref{invariance2} is due to Mrozik \cite{Mr2}.

\begin{proof}
We discuss only the case of homology, the case of cohomology being similar (and less interesting).
If $X$ and $Y$ are coronated polyhedra, by Theorem \ref{compactohedral}
$X$ admits a polyhedral resolution of the form $\dots\to R_2\to R_1$
and $Y$ admits a polyhedral resolution of the form $\dots\to Q_2\to Q_1$.
Then a strong shape morphism $F\:X\to Y$ can be represented by a map 
$f\:(R_{[0,\infty)},R_0)\to (Q_{[0,\infty)},Q_0)$ which extends to a continuous map 
$\bar f\:(R_{[0,\infty)}^\Star,\,\STAR\cup R_0)\to (Q_{[0,\infty)}^\Star,\,\STAR\cup Q_0)$
(see \cite{M00}*{Remark \ref{book:telescope0}}).
By Lemma \ref{two-five} $\bar f$ induces homomorphisms
$H_n(R_{[0,\infty]},\,X\cup R_0)\to H_n(Q_{[0,\infty]},\,Y\cup Q_0)$.
Since $R_{[0,\infty)}$ deformation retracts onto $R_0$, we have
$H_n(R_{[0,\infty]},R_0)=0$ for all $n$.
Hence the exact sequence of the triple yields
$H_n(R_{[0,\infty]},\,X\cup R_0)\simeq H_{n-1}(X\sqcup R_0,\,R_0)\simeq H_{n-1}(X)$,
and similarly
$H_n(Q_{[0,\infty]},\,Y\cup Q_0)\simeq H_{n-1}(Y)$.
Thus we get a homomorphism $\bar f_*\:H_{n-1}(X)\to H_{n-1}(Y)$.
Given another representative 
$\bar g\:(R_{[0,\infty)}^\Star,\,\STAR\cup R_0)\to (Q_{[0,\infty)}^\Star,\,\STAR\cup Q_0)$
of $F$, it is homotopic to $\bar f$, and hence $\bar f_*=\bar g_*$.
Thus $F_*\bydef \bar f_*$ is well-defined.
It is easy to see that $\id_*=1$ and $G_*F_*=(GF)_*$.
If $F$ is represented by a map $\phi\:X\to Y$ (in other words, $f$ extends 
to a continuous map $(R_{[0,\infty]},\,X\cup R_0)\to (Q_{[0,\infty]},\,Y\cup Q_0)$), 
then clearly $F_*=\phi_*$.
\end{proof}

\subsection{Milnor-type short exact sequence}

Theorem \ref{main-ses} is a consequence of the following

\begin{theorem}\label{main-ses2}
Let $\dots\to P_2\to P_1$ be an ANR resolution of a coronated polyhedron $X$ and let $H_*$ (resp.\ $H^*$) be 
a generalized (co)homology theory satisfying $\nullseq$-additivity on null-sequences of polyhedra and 
map excision on closed pairs of coronated polyhedra.

(a) There is an isomorphism $H^n(X)\simeq\colim H^n(P_i)$ and a short exact sequence
\[0\to\derlim H_{n+1}(P_i)\to H_n(X)\to\lim H_n(P_i)\to 0.\]

(b) These are natural with respect to any natural transformation of (co)homology theories and with respect 
to any map $f\:X\to X'$ along with a collection of maps $f_i\:P_{j_i}\to P'_i$ that fit in a homotopy 
commutative diagram with $f$ and with the bonding maps. 
\end{theorem}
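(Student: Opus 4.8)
The plan is to prove the short exact sequence for one conveniently chosen resolution and then transport it to the given resolution $\dots\to P_2\to P_1$ along a pro-isomorphism. Since $X$ is a coronated polyhedron, Theorem \ref{compactohedral} (see also Corollary \ref{poly-main}) presents $X$ as the limit of a compactohedral inverse sequence $\dots\xr{p_2}R_2\xr{p_1}R_1$, which is a resolution by Theorem \ref{resolution theorem}. Both $\{P_i\}$ and $\{R_i\}$ are ANR resolutions of $X$, hence expansions, hence pro-isomorphic; therefore the towers $\{H_n(P_i)\}$ and $\{H_n(R_i)\}$ are isomorphic as pro-groups and have the same $\lim$ and $\derlim$, and likewise $\colim H^n(P_i)\simeq\colim H^n(R_i)$. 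Thus it suffices to establish the statements for the compactohedral sequence $\{R_i\}$; naturality with respect to the pro-isomorphism, needed to match the projection-induced map $H_n(X)\to\lim H_n(P_i)$, will be subsumed in (b).

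The computation proceeds through the extended and pinched mapping telescopes of $\{R_i\}$, linked by Lemma \ref{two-five}. On the extended side, $R_{[0,\infty]}$ deformation retracts onto $R_0$, so $H_*(R_{[0,\infty]},R_0)=0$; feeding this into the exact sequence of the triple $(R_{[0,\infty]},\,X\cup R_0,\,R_0)$ and excising $R_0$ from the disjoint union $X\sqcup R_0$ yields $H_m(R_{[0,\infty]},\,X\cup R_0)\simeq H_{m-1}(X)$, exactly as in the proof of Theorem \ref{invariance2}. Lemma \ref{two-five} then identifies this with $H_m(R_{[0,\infty)}^\Star,\,\STAR\cup R_0)$, so it remains to compute the latter in terms of the tower $\{H_*(R_i)\}$.

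On the pinched side I would set up a Mayer--Vietoris argument. Placing the rung $R_i$ at telescope-time $i$, write $R_{[0,\infty)}^\Star=A\cup B$, where $A$ is the union of $\STAR$ with the even bands $MC(p_{2k})$ and $B$ the union of $\STAR$ with the odd bands $MC(p_{2k+1})$. Because the rungs shrink to $\STAR$ near the pinched end (see Example \ref{telescope-quotient}), $A$ and $B$ are null-sequences of mapping cylinders deformation retracting onto $\nullseq_k R_{2k}$ and $\nullseq_k R_{2k+1}$, while $A\cap B$ retracts onto the null-sequence $\nullseq_i R_i$ of all rungs. By $\nullseq$-additivity, $\tilde H_n(A\cap B)\simeq\prod_i\tilde H_n(R_i)$ and $\tilde H_n(A)\oplus\tilde H_n(B)\simeq\prod_i\tilde H_n(R_i)$, and a rung class at $R_i$ maps to itself in its own band and to its image under $(p_{i-1})_*$ in the adjacent band; thus the Mayer--Vietoris map is, up to sign and reindexing, the shift difference $1-p_*$ on $\prod_i\tilde H_n(R_i)$, whose kernel is $\lim H_n(R_i)$ and cokernel $\derlim H_n(R_i)$ by definition. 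Splitting the resulting sequence gives $0\to\derlim H_m(R_i)\to H_m(R_{[0,\infty)}^\Star,\STAR\cup R_0)\to\lim H_{m-1}(R_i)\to 0$, and combining with the previous paragraph and reindexing $n=m-1$ produces $0\to\derlim H_{n+1}(R_i)\to H_n(X)\to\lim H_n(R_i)\to 0$. The bookkeeping with $R_0$ and the basepoint $\STAR$ (restricting products to $i\ge 1$ and passing between reduced and unreduced groups) changes neither $\lim$ nor $\derlim$, by cofinality and an Eilenberg--Steenrod splitting, so the unreduced statement follows. The cohomology case runs in parallel, except that the tower of spaces induces a direct system $H^n(R_i)\to H^n(R_{i+1})$ whose colimit is exact, so no derived term appears and $H^n(X)\simeq\colim H^n(R_i)$.

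The naturality in (b) follows because every step is functorial: the telescopes, the triple and Mayer--Vietoris sequences, the map-excision isomorphisms of Lemma \ref{two-five}, and the $\nullseq$-additivity isomorphisms are all natural, and a homotopy-commutative ladder $(f,f_i)$ induces a compatible map of telescopes as in the proof of Theorem \ref{invariance2}. The main obstacle is the Mayer--Vietoris step itself: the theory is only assumed to satisfy \emph{map} excision and additivity for \emph{null-sequences}, not ordinary excision or arbitrary products, so I cannot invoke the usual Mayer--Vietoris sequence directly. I would instead derive it by checking that the triad $(R_{[0,\infty)}^\Star;A,B)$ is map-excision-admissible---the controlled collapse at $\STAR$ engineered in Lemma \ref{two-five} is exactly what makes the relevant inclusions map-excision equivalences---so that map excision supplies the isomorphism $H_*(A,A\cap B)\simeq H_*(R_{[0,\infty)}^\Star,B)$ from which the sequence is assembled, and then verifying that $A$, $B$ and $A\cap B$ are genuine null-sequences so that $\nullseq$-additivity identifies their homology with the products and the connecting homomorphism with $1-p_*$.
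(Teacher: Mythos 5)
Your proposal is correct and follows essentially the same route as the paper's own proof: reduce to a pre-compactohedral resolution $\dots\to R_2\to R_1$ (Theorems \ref{compactohedral} and \ref{resolution theorem}), identify $H_{n+1}(R_{[0,\infty)}^\Star,\,\STAR\cup R_0)$ with $H_n(X)$ via Lemma \ref{two-five}, the exact sequence of the triple $(R_{[0,\infty]},\,X\cup R_0,\,R_0)$ and excision, and then transport everything to the given resolution $\{P_i\}$ by maps between the extended telescopes (which is also what your appeal to pro-isomorphism amounts to in practice). The only divergence is that where the paper cites \cite{M00} for the $\lim$--$\derlim$ computation of $H_*(R_{[0,\infty)}^\Star,\,\STAR\cup R_0)$, you unfold the standard even/odd-band Mayer--Vietoris argument, with Mayer--Vietoris assembled from map excision and the bands recognized as null-sequences of polyhedra; this is precisely the content of the cited proposition, so it is a filled-in citation rather than a different approach.
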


\begin{proof}
First we prove all assertions for one specific sequential resolution of $X$.
Namely, by Theorem \ref{compactohedral} $X$ is the limit of a pre-compactohedral inverse sequence 
of polyhedra $\dots\xr{p_2} R_2\xr{p_1} R_1$, which by Theorem \ref{resolution theorem} is a resolution of $X$.

By standard arguments (see \cite{M00}*{Proposition \ref{book:main-ses0}}) there is a short exact sequence
\[0\to\derlim H_{n+1}(R_i)\to H_{n+1}(R_{[0,\infty)}^\Star,\,\STAR\cup R_0)\to\lim H_n(R_i)\to 0\] 
and an isomorphism $H^{n+1}(R_{[0,\infty)}^\Star,\,\STAR\cup R_0)\simeq\colim H^n(P_i)$
which are natural with respect to any natural transformation of (co)homology theories and with respect to any 
collection of maps $f_i\:R_{j_i}\to R'_i$ that fit in a homotopy commutative diagram with the bonding maps. 

By Lemma \ref{two-five} the metric quotient map $R_{[0,\infty]}\to R_{[0,\infty)}^\Star$ induces an isomorphism
\[H_{n+1}(R_{[0,\infty)}^\Star,\,\STAR\cup R_0)\simeq H_{n+1}(R_{[0,\infty]},\,X\cup R_0).\]
Since $R_{[0,\infty]}$ deformation retracts onto $R_0$, from the exact sequence of the pair
$(R_{[0,\infty]},\,R_0)$ we have $H_i(R_{[0,\infty]},\,R_0)=0$ for all $i$.
Hence the connecting homomorphism 
\[H_{n+1}(R_{[0,\infty]},\,X\cup R_0)\xr{\partial_*} H_n(X\cup R_0,\,R_0)\] from the exact sequence 
of the triple $(R_{[0,\infty]},\,X\cup R_0,\,R_0)$ is an isomorphism.
By excision $H_n(X\cup R_0,\,R_0)\simeq H_n(X)$.
Thus we obtain a short exact sequence
\[0\to\derlim H_{n+1}(R_i)\to H_n(X)\to\lim H_n(R_i)\to 0\]
and by similar arguments also an isomorphism $H^n(X)\simeq\colim H^n(R_i)$.
By inspecting their constructions one can see that they is natural with respect to any natural transformation
of homology theories.

Let $f\:X\to X'$ be a map into another coronated polyhedron (not necessarily respecting the decompositions
$X=K\cup P$ and $X'=K'\cup P'$) and let $\dots\xr{p'_2} R'_2\xr{p'_1} R'_1$ be a resolution of $X'$ which is 
a pre-compactohedral inverse sequence.
Since $\dots\xr{p_2} R_2\xr{p_1} R_1$ is a resolution of $X$ and each $R'_i$ is an ANR, the map $f$ extends to 
a continuous map of the extended mapping telescopes $F\:R_{[0,\infty]}\to R'_{[0,\infty]}$ such that 
$F^{-1}(R'_i)=R_{j_i}$ for some increasing sequence $j_1,j_2,\dots$, and in particular $F^{-1}(X')=X$
(see \cite{M00}*{Theorem \ref{book:strongshape}}).
Then we have a commutative diagram
\[\begin{CD}
R_{[0,\infty]}@>F>>R'_{[0,\infty]}\\
@VVV@VVV\\
R_{[0,\infty)}^\Star@>F/X>>(R')_{[0,\infty)}^\Star\\
\end{CD}\]
and hence the isomorphism of Lemma \ref{two-five} commutes with $F_*$ and $(F/X)_*$.
From this we straightforwardly get commutative diagrams
\[\begin{CD}
0@>>>\derlim H_{n+1}(R_i)@>>>H_n(X)@>>>\lim H_n(R_i)@>>>0\\
@.@VVV@VVV@VVV@.\\
0@>>>\derlim H_{n+1}(R'_i)@>>>H_n(X')@>>>\lim H_n(R'_i)@>>>0\\
\end{CD}\]
and
\[\begin{CD}
H^n(X)@>\simeq>>\colim H^n(R_i)\\
@VVV@VVV\\
H^n(X')@>\simeq>>\colim H^n(R'_i).\\
\end{CD}\]

Given an arbitrary sequential ANR resolution $\dots\to P_2\to P_1$ of $X$, we similarly get continuous maps
$P_{[0,\infty]}\to R_{[0,\infty]}$ and $R_{[0,\infty]}\to P_{[0,\infty]}$ both extending $\id_X$
whose compositions are the identities up to the bonding maps.
These yield isomorphisms $\xi_*\:\lim H_n(P_i)\simeq\lim H_n(R_i)$, 
$\xi_*^1\:\derlim H_n(P_i)\simeq\derlim H_n(R_i)$ and $\xi^*\:\colim H^n(P_i)\simeq\colim H^n(R_i)$.

Given a map $f\:X\to X'$ and an arbitrary sequential resolution $\dots\to P_2'\to P_1'$ of $X'$, we also get 
a continuous extension $P_{[0,\infty]}\to P'_{[0,\infty]}$ of $f$, which induces homomorphisms
$f_*\:\lim H_n(P_i)\to\lim H_n(P'_i)$, $f_*^1\:\derlim H_n(P_i)\to\derlim H_n(P'_i)$ and 
$f^*\:\colim H^n(P'_i)\to\colim H^n(P_i)$.
Moreover, the diagram
\[\begin{CD}
P_{[0,\infty]}@>>>P'_{[0,\infty]}\\
@VVV@VVV\\
R_{[0,\infty]}@>>>R'_{[0,\infty]}\\
\end{CD}\]
homotopy commutes up to the bonding maps.
Hence these homomorphisms $f_*$, $f_*^1$ and $f^*$ commute with the isomorphisms $\xi_*$, $\xi_*^1$ and $\xi^*$.

From this we get an isomorphism $H^n(X)\simeq\colim H^n(P_i)$ and a short exact sequence
\[0\to\derlim H_{n+1}(P_i)\to H_n(X)\to\lim H_n(P_i)\to 0\]
which are natural with respect to to any map $f\:X\to X'$ along with a collection of maps $f_i\:P_{j_i}\to P'_i$ 
that fit in a homotopy commutative diagram with $f$ and with the bonding maps. 
By inspecting their construction one can see that they are also natural with respect to any natural 
transformation of (co)homology theories.
\end{proof}

\subsection{Uniqueness theorem}

\begin{corollary} \label{uniqueness}
Let $h_*$ (resp.\ $h^*$) be a generalized (co)homology theory on closed pairs of coronated polyhedra which 
satisfies the $\nullseq$-additivity axiom for null-sequences of polyhedra and the map excision axiom.
Then any natural transformation of $h_*$ (resp.\ $h^*$) into another such theory is an isomorphism as long as 
it is an isomorphism on polyhedra.
\end{corollary}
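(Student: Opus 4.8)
The plan is to reduce the statement to the Milnor-type short exact sequence of Theorem~\ref{main-ses2} and to finish by two applications of the five lemma. Write $\tau$ for a natural transformation from $h_*$ to a second theory $k_*$ satisfying the same two axioms, and assume that $\tau$ is an isomorphism on every polyhedron; I must show that $\tau$ is an isomorphism on every closed pair $(X,A)$ of coronated polyhedra. The cohomology case is formally identical, and I comment on it in passing.

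First I would settle the absolute case, $A=\emptyset$. Let $X$ be a coronated polyhedron. The decisive step is to choose the resolution supplied by Theorem~\ref{compactohedral}, which presents $X$ as the limit of a compactohedral inverse sequence $\dots\xr{p_2}R_2\xr{p_1}R_1$ whose terms are genuine \emph{polyhedra} (by Theorem~\ref{resolution theorem} this is indeed a resolution of $X$). Feeding this resolution into Theorem~\ref{main-ses2}(a) for both $h_*$ and $k_*$, and invoking the naturality in part~(b) with respect to $\tau$, I obtain a commutative ladder with exact rows
\[\begin{CD}
0@>>>\derlim h_{n+1}(R_i)@>>>h_n(X)@>>>\lim h_n(R_i)@>>>0\\
@.@VVV@V\tau VV@VVV@.\\
0@>>>\derlim k_{n+1}(R_i)@>>>k_n(X)@>>>\lim k_n(R_i)@>>>0.
\end{CD}\]
Since each $R_i$ is a polyhedron, $\tau$ is an isomorphism on every $h_*(R_i)$; an isomorphism of inverse sequences induces isomorphisms on both $\lim$ and $\derlim$ (these being the kernel and cokernel of $1-\mathrm{shift}$ on $\prod_i h_*(R_i)$), so the two outer vertical maps are isomorphisms, and the short five lemma forces the middle map $\tau\:h_n(X)\to k_n(X)$ to be an isomorphism as well. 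In cohomology the argument is shorter still: Theorem~\ref{main-ses2}(a) gives $h^n(X)\simeq\colim h^n(R_i)$, and exactness of filtered colimits turns the pointwise isomorphisms $\tau\:h^n(R_i)\to k^n(R_i)$ into an isomorphism on $h^n(X)$.

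It remains to pass to a closed pair $(X,A)$, where $X$ and $A$ are both coronated polyhedra. Here I would compare the long exact sequences of the pair for $h_*$ and for $k_*$: these form a commutative ladder along which $\tau$ acts, the maps on $h_*(X)$ and $h_*(A)$ are isomorphisms by the absolute case just proved, and the five lemma then yields an isomorphism on $h_*(X,A)$; the cohomology case is identical. I do not expect a genuine obstacle at this stage, since all of the analytic difficulty has already been absorbed into Theorem~\ref{main-ses2}. The one point that must not be overlooked is that the chosen resolution consists of \emph{polyhedra} rather than arbitrary ANRs: this is exactly what allows the hypothesis ``isomorphism on polyhedra'' to act on the terms $h_*(R_i)$, and hence on the $\lim$ and $\derlim$ columns, of the ladder above.
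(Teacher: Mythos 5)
Your proof is correct and is essentially the paper's own argument: the same resolution from Theorem~\ref{compactohedral} (a resolution by Theorem~\ref{resolution theorem}), the same appeal to Theorem~\ref{main-ses2} and its naturality, and the same five-lemma (resp.\ $\colim$) conclusion. The only difference is that you spell out the standard reduction of the relative case $(X,A)$ to the absolute case via the long exact sequence of the pair, a step the paper leaves implicit.
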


Theorem \ref{uniqueness2} follows from Corollary \ref{uniqueness} and Milnor's theorem that
if $h_*$ (resp.\ $h^*$) satisfies $\bigsqcup$-additivity for disjoint unions of compact polyhedra,
then any natural transformation of $h_*$ (resp.\ $h^*$) into another such theory is an isomorphism on polyhedra
as long as it is an isomorphism on $pt$ (see \cite{M00}*{Theorem \ref{book:uniqueness1}(b)}).

\begin{proof}
Let $X$ be a coronated polyhedron.
By Theorem \ref{compactohedral} $X$ is the limit of a pre-compactohedral inverse sequence 
of polyhedra $\dots\xr{p_2} R_2\xr{p_1} R_1$, which by Theorem \ref{resolution theorem} is a resolution of $X$.

In the case of homology, by Theorem \ref{main-ses2} $t$ yields a commutative diagram
$$\begin{CD}
0@>>>\derlim h_{n+1}(R_i)@>>>h_n(X)@>>>\lim h_n(R_i)@>>>0\\
@.@V\simeq VV@VtVV@V\simeq VV@.\\
0@>>>\derlim k_{n+1}(R_i)@>>>k_n(X)@>>>\lim k_n(R_i)@>>>0
\end{CD}$$
where the outer vertical arrows are isomorphisms. 
Hence by the five-lemma the central vertical arrow is also an isomorphism. 

In the case of cohomology, by Theorem \ref{main-ses2} $t$ yields a commutative diagram
$$\begin{CD}
h^n(X)@>\simeq>>\colim h^n(R_i)\\
@VtVV@V\simeq VV\\
k^n(X)@>\simeq>>\colim k^n(R_i).
\end{CD}$$
Hence $t\:h^n(X)\to k^n(X)$ is an isomorphism.
\end{proof}

\subsection*{Disclaimer}

I oppose all wars, including those wars that are initiated by governments at the time when 
they directly or indirectly support my research. The latter type of wars include all wars 
waged by the Russian state in the last 25 years (in Chechnya, Georgia, Syria and Ukraine) 
as well as the USA-led invasions of Afghanistan and Iraq.

\end{document}